\newtheorem{thm}{Theorem}[section]
\newtheorem{lem}{Lemma}[section]
\newtheorem{cor}{Corollary}[section]
\newtheorem{prop}{Proposition}[section]
\theoremstyle{definition}
\theoremstyle{remark}
\newtheorem{rem}{Remark}[section]
\numberwithin{equation}{section}
\begin{document}

\newcommand{\thmref}[1]{Theorem~\ref{#1}}
\newcommand{\secref}[1]{Section~\ref{#1}}
\newcommand{\lemref}[1]{Lemma~\ref{#1}}
\newcommand{\propref}[1]{Proposition~\ref{#1}}
\newcommand{\corref}[1]{Corollary~\ref{#1}}
\newcommand{\remref}[1]{Remark~\ref{#1}}
\newcommand{\eqnref}[1]{(\ref{#1})}
\newcommand{\exref}[1]{Example~\ref{#1}}

\newcommand{\nc}{\newcommand}
\nc{\Z}{{\mathbb Z}} \nc{\C}{{\mathbb C}} \nc{\N}{{\mathbb N}}
\nc{\F}{{\mf F}} \nc{\Q}{\ol{Q}} \nc{\la}{\lambda}
\nc{\ep}{\epsilon} \nc{\h}{\mathfrak h} \nc{\n}{\mf n} \nc{\A}{{\mf
a}} \nc{\G}{{\mathfrak g}} \nc{\SG}{\overline{\mathfrak g}}
\nc{\D}{\mc D} \nc{\Li}{{\mc L}} \nc{\La}{\Lambda} \nc{\is}{{\mathbf
i}} \nc{\V}{\mf V} \nc{\bi}{\bibitem} \nc{\NS}{\mf N}
\nc{\dt}{\mathord{\hbox{${\frac{d}{d t}}$}}} \nc{\E}{\mc E}
\nc{\ba}{\tilde{\pa}} \nc{\half}{\frac{1}{2}} \nc{\mc}{\mathcal}
\nc{\mf}{\mathfrak} \nc{\hf}{\frac{1}{2}}
\nc{\hgl}{\widehat{\mathfrak{gl}}} \nc{\gl}{{\mathfrak{gl}}}
\nc{\hz}{\hf+\Z} \nc{\vac}{|0 \rangle}
\nc{\dinfty}{{\infty\vert\infty}} \nc{\SLa}{\overline{\Lambda}}
\nc{\SF}{\overline{\mathfrak F}} \nc{\SP}{\overline{\mathcal P}}
\nc{\U}{\mathfrak u} \nc{\SU}{\overline{\mathfrak u}}
\nc{\ov}{\overline}
\nc{\sL}{\ov{\mf{l}}}
\nc{\sP}{\ov{\mf{p}}}

\advance\headheight by 2pt

\title[Kostant's homology formula]{Howe duality and Kostant Homology Formula for infinite-dimensional Lie superalgebras}

\author[Shun-Jen Cheng]{Shun-Jen Cheng$^\dagger$}
\thanks{$^\dagger$Partially supported by an NSC-grant and an Academia Sinica Investigator grant}
\address{Institute of Mathematics, Academia Sinica, Taipei,
Taiwan 11529} \email{chengsj@math.sinica.edu.tw}

\author[Jae-Hoon Kwon]{Jae-Hoon Kwon$^{\dagger\dagger}$}
\thanks{$^{\dagger\dagger}$Partially supported by KRF-grant 2005-070-C00004.}
\address{Department of Mathematics, University of Seoul, 90, Cheonnong-Dong, Dongdaemun-gu, Seoul 130-743, Korea}
\email{jhkwon@uos.ac.kr}

\begin{abstract} \vspace{.3cm}  Using
Howe duality we compute explicitly Kostant-type homology groups for
a wide class of representations of the infinite-dimensional Lie
superalgebra $\hgl_{\infty|\infty}$ and its classical subalgebras at
positive integral levels.  We also obtain Kostant-type homology
formulas for the Lie algebra $\hgl_\infty$ at negative integral
levels. We further construct resolutions in terms of generalized
Verma modules for these representations.
\end{abstract}



\maketitle

\tableofcontents

\section{Introduction}

Howe duality \cite{H, H2} has found important applications in many
areas of mathematics where representation theory is an indispensable
tool. It is therefore not surprising that a concept as simple and
fundamental should prove to be useful and powerful in the
representation theory of Lie superalgebras. The first Howe dualities
involving Lie superalgebras already appeared in Howe's original
paper \cite{H}. It was followed by a systematic and in-depth study
of such dualities in \cite{CW1, CW2, LS, S2}. These works mainly
were concerned with the construction of Howe dualities involving Lie
superalgebras. Subsequent works focused on their applications; in
particular, we mention here the derivation of combinatorial
character formulas for Lie superalgebras in \cite{CL, CZ2}.

The purpose of the present paper is to demonstrate yet another
application of Howe duality to Lie superalgebras, namely the
computation of certain Kostant homology groups (and thus cohomology
groups) \cite{Ko}. As examples we compute them for a large class of
irreducible representations of $\hgl_{\infty|\infty}$
(cf.~\cite{KvL, KW2}) and its subalgebras of classical types
$\mf{b,c,d}$. Similar to character formulas, it is a rather
non-trivial task to compute such homology and cohomology groups for
Lie (super)algebras directly. Indeed such formulas and the
Euler-Poincar\'e principle immediately imply character formulas as
well.

Let us provide more details below.  In the remainder of this section
it will be convenient to restrict most of our discussion  to the
case of $\hgl_{\infty|\infty}$ for simplicity's sake. Consider the
Lie algebra $\gl_\infty$ consisting of matrices $(a_{ij})$ with
$i,j\in\Z$, and $a_{ij}=0$ for all but finitely many $a_{ij}$'s. Let
$\G=\hgl_\infty$ denote its well-known central extension by a
one-dimensional center (see \eqnref{cocycle}). Topological
completions of $\G$, and its subalgebras of classical types
$\mf{b,c,d}$, have found remarkable applications in mathematical
physics since their introduction by the Kyoto school \cite{DJKM},
and the results in this paper remain valid for these completions as
well, since all irreducible representations considered in this paper
are also irreducible representations of the corresponding
completions. Introduce the subalgebras $\gl_{\le 0}$, $\gl_{>0}$ and
$\U_-$ consisting of matrices $(a_{ij})$ with $i,j\le 0$, $i,j>0$
and $i> 0, j\le 0$, respectively. Let $\F^{\ell}$ denote the Fock
space generated by $\ell$ pairs of free fermions.  Then it is known
that there is a joint action of $\G\times {\rm GL}(\ell)$ on
$\F^\ell$. Indeed $\left(\G, {\rm GL}(\ell)\right)$ forms a
reductive dual pair \cite{F}, giving a one-to-one correspondence
between certain irreducible highest weight representations of $\G$
and the irreducible rational representations of ${\rm GL}(\ell)$.
Now the irreducible rational representations of ${\rm GL}(\ell)$ are
parameterized by $\mc{P}({\rm GL}(\ell))$, the set of generalized
partitions of length $\ell$. We denote the $\G$-module corresponding
to $\la\in\mc{P}({\rm GL}(\ell))$ by $L(\G,\Lambda^{\mf{a}}(\la))$.
Since the derived subalgebra is a generalized Kac-Moody algebra, the
homology theory of integrable modules over generalized Kac-Moody
algebra (cf.~\cite{J}) is applicable to $\G$, giving us a precise
description of the $\gl_{\le 0}\oplus\gl_{>0}$-module structure of
the $\mf{u}_-$-homology groups ${\rm
H}_k\left(\mf{u}_-;L\left(\G,\Lambda^{\mf{a}}(\la)\right)\right)$
for $k\in\mathbb{N}$.

Now consider the super analogue $\gl_{\infty|\infty}$ consisting of
matrices $(a_{rs})$ with $r,s\in\hf\Z$, and $a_{rs}=0$ for all but
finitely many $a_{rs}$'s. We regard $(a_{rs})$ as the matrix of a
linear transformation on the space $\C^{\infty|\infty}$ with respect
to a basis $\{\,e_r\,\vert \,r\in\hf\Z\,\}$, where the
$\Z_2$-gradation is given by  ${\rm deg}e_r=2r$ modulo $2$. There is
a similar central extension $\SG=\hgl_{\infty|\infty}$ (see
\eqnref{cocycle-super}), and we can define subalgebras
$\ov{\gl}_{\le 0}$, $\ov{\gl}_{>0}$, and $\ov{\U}_-$ in an analogous
fashion. Let $\ov{\F}^{\ell}$ denote the Fock space generated by
$\ell$ pairs of free fermions and $\ell$ pairs of free bosons.  Then
there exists a joint action of $\SG\times {\rm GL}(\ell)$ on
$\ov{\F}^\ell$, which also forms a Howe dual pair \cite{CW2}. This
provides another one-to-one correspondence between certain
irreducible highest weight representations of $\SG$ and the
irreducible rational representations of ${\rm GL}(\ell)$. We denote
the $\SG$-module corresponding to $\la\in\mc{P}({\rm GL}(\ell))$ by
$L(\SG,\SLa^{\mf{a}}(\la))$. Thus these two Howe dualities establish
a one-to-one correspondence
\begin{equation*}
L(\G,\La^{\mf{a}}(\la))\stackrel{1{-}1}{\longleftrightarrow}
L(\SG,\SLa^{\mf{a}}(\la)),\quad\la\in\mc{P}({\rm GL}(\ell)).
\end{equation*}
This correspondence enables us to determine the $\ov{\gl}_{\le
0}\oplus\ov{\gl}_{>0}$-module structure of ${\rm
H}_k(\ov{\U}_-;L(\SG,\SLa^{\mf{a}}(\la)))$ in terms ${\rm
H}_k\left({\U}_-;L\left(\G,\La^{\mf{a}}(\la)\right)\right)$, for all
$k\in\N$, in \thmref{mainthm}. We note that in contrast to Lie
algebras the derived subalgebra of $\hgl_{\infty|\infty}$ is not a
generalized Kac-Moody superalgebra, and thus a direct generalization
of the classical homology theory is not possible.

To our best knowledge such a similar relationship on the level of
homology groups between Lie algebras and Lie superalgebras was first
observed for the irreducible tensor representations of the general
linear superalgebra in \cite{CZ1}.  It is our hope that the results
presented here may lead to a better understanding of the
relationship between the representation theories of Lie algebras and
Lie superalgebras (e.g.~in the spirit of \cite{CWZ}), which is our
motivation for this investigation.

Besides Howe duality involving an infinite-rank affine Kac-Moody
algebras, worked out systematically in \cite{W1}, Aribaud's method
for computing homology from the character formula \cite{A, L} also
plays an important role in our approach. Together with elementary
symmetric function theory they provide us with all the necessary
tools to reduce the problem to a comparison of eigenvalues of the
classical and the super Casimir operators on the chain spaces.

We conclude with an outline of the paper. In Sections \ref{IDLA} and
\ref{IDLSA} the Howe dualities involving the infinite-dimensional
Lie (super)algebras are recalled. In \secref{tensor:repn} we discuss
in detail the category of tensor representations of $\gl(m|n)$, for
$n\in\N\cup\{\infty\}$. Although the results therein may be known to
experts, we have decided to include them, as we were unable to find
references for them. These results are then used in subsequent
sections. In Section \ref{casimir:op} invariant symmetric
non-degenerate bilinear forms are introduced for the Lie algebras
and superalgebras under discussion, based on which the Casimir
operators are defined. We remark that our Casimir operators  differ
slightly from the usual ones, as this makes subsequent comparisons
of their eigenvalues simpler. The homology groups for
$\hgl_{\infty|\infty}$ and its classical subalgebras are then
computed in \secref{sec:homology}. In \secref{sec:resolution}
resolutions in terms of generalized Verma modules are constructed.
In \secref{sec:negative} the $\hgl_\infty$-modules appearing in the
bosonic Fock space are discussed, and their Kostant-type homology
groups are computed. In \secref{final:notation} we collect some
notation for the convenience of the reader.

Finally, all vector spaces, algebras et cetera are over the complex
numbers $\C$.
\medskip

{\bf Acknowledgements.} The first author thanks NCTS for partial
support. The second author thanks the Institute of Mathematics,
Academia Sinica, for hospitality and support, where part of this
work was completed. Both authors thank the referees for helpful
comments.

\section{Infinite-dimensional Lie algebras}\label{IDLA}

\subsection{Notation and conventions}

Denote by $\mathcal P^+$ the set of partitions. Given $\ell\in\N$, a
non-increasing sequence $\la=(\la_1,\la_2,\cdots,\la_\ell)$ of
$\ell$ integers with
\begin{equation}\label{gpartition}
\la_1\ge\la_2\ge\cdots\ge\la_i> 0=\cdots=0>\la_j\ge\cdots\ge\la_\ell
\end{equation}
will be called a {\em generalized partition of length} $\ell$.
Similar to ordinary partitions, a generalized partition $\la$ may be
viewed as a left-adjusted diagram with $\ell$ rows such that the
$k$-th row contains $\la_k$ cells. Here we use the convention that
$\la_k\ge 0$ (resp. $\la_k\le 0$) means we have $|\la_k|$ boxes to
the right (resp. left) of a fixed vertical line indicating $0$
cells. As an example consider $\la=(5,3,2,1,-1,-2)$ with $\ell=6$.
The corresponding diagram is given in \eqnref{standard} below.
\begin{equation}\label{standard}
{\beginpicture \setcoordinatesystem units <1.5pc,1.5pc> point at 0 2
\setplotarea x from -1.5 to 1.5, y from -2 to 4 \plot 0 0 0 4 1 4 1
0 0 0 / \plot 1 1 2 1 2 4 1 4 / \plot 2 2 3 2 3 4 2 4 / \plot 3 3 5
3 5 4 3 4 / \plot 0 1 1 1 / \plot 0 2 2 2 / \plot 0 3 3 3 / \plot 4
3 4 4 / \plot 0 0 -1 0 / \plot 0 0 0 -2 / \plot -1 0 -1 -2 / \plot 0
-2 -2 -2 / \plot 0 -1 -2 -1 / \plot -2 -2 -2 -1 /
\endpicture}
\end{equation}
For a generalized partition $\la$ of length $\ell$, let $\la_{k}'$
be the length of the $k$-th column of $\la$. We use the convention
that the first column of $\la$ of the form \eqnref{gpartition} is
the first column of the partition
$\la_1\ge\la_2\ge\cdots\ge\la_{j-1}$. The column to its right is the
second column of $\la$, while the column to its left is the zeroth
column and the column to the left of its zeroth column is the
$-1$-st column et cetera.  We also use the convention that a
non-positive column has non-positive length. For our example
\eqnref{standard}, we have $\la_{-1}'=-1$, $\la_0'=-2$, $\la_1'=4$
et cetera. The {\em size} of a generalized partition $\la$ is
$\sum_j\la_j$, and is denoted by $|\la|$. For a partition
$\la=(\la_1,\la_2,\ldots)$, $\la'=(\la'_1,\la_2',\ldots)$ and
$l(\la)$ denote the conjugate and the length of $\la$, respectively.

\subsection{Preliminaries on $\widehat{\gl}_\infty$ and its subalgebras of classical
types}\label{class:subalgebras}

Below we recall the infinite-dimensional Lie algebra
$\widehat{\gl}_\infty$ and its subalgebras $\mathfrak{b}_{\infty}$,
$\mathfrak{c}_{\infty}$, $\mathfrak{d}_{\infty}$. We will say that
these algebras are of types $\mf{a,b,c,d}$, respectively. The
following notation will be assumed throughout the paper.
\begin{itemize}
\item[$\cdot$] $\mathfrak{g}$ : the Lie algebra
$\widehat{\gl}_\infty$, or its subalgebra $\mathfrak{b}_{\infty}$,
$\mathfrak{c}_{\infty}$, $\mathfrak{d}_{\infty}$,

\item[$\cdot$] $\mathfrak{h}$ : a Cartan subalgebra of $\mathfrak{g}$,

\item[$\cdot$] $I$ : the index set for simple roots,

\item[$\cdot$] $\Pi=\{\,\alpha_i\,|\,i\in I\,\}$ : the set of
simple roots,

\item[$\cdot$] $\Pi^{\vee}=\{\,\alpha_i^{\vee}\,|\,i\in I\,\}$ : the set of
simple coroots,

\item[$\cdot$] $\Delta^{+}$ : the set of positive roots.



\end{itemize}

\subsubsection{The Lie algebra
$\widehat{\gl}_\infty$}\label{rhoc:aux1} Let $\mathbb{C}^{\infty}$
be the vector space over $\C$ with a basis
$\{\,e_i\,|\,i\in\mathbb{Z}\,\}$ so that an element in ${\rm
End}(\C^\infty)$ may be identified with a matrix $(a_{ij})$
($i,j\in\Z$). Let $E_{ij}$ be the matrix with $1$ at the $i$-th row
and $j$-th column and zero elsewhere. Let $\gl_\infty$ denote the
subalgebra consisting of $(a_{ij})$ with finitely many non-zero
$a_{ij}$'s. Then $\gl_\infty$ in matrix form is spanned by $E_{ij}$
($i,j\in\Z$). Denote by $\widehat{\gl}_\infty=\gl_\infty\oplus\C K$
the central extension of $\gl_\infty$ by a one-dimensional center
$\C K$ given by the $2$-cocycle (cf.~\cite{DJKM, K})
\begin{equation}\label{cocycle}
\alpha(A,B):={\rm Tr}([J,A]B),
\end{equation}
where $J=\sum_{i\le 0}E_{ii}$. The derived subalgebra of
$\widehat{\gl}_\infty$ is an infinite-rank Kac-Moody algebra
\cite{K}. Note that  $\h=\sum_{i\in \Z}\C E_{ii}\oplus\C K$. Put
$I=\mathbb{Z}$. Then we have
\begin{align*}
\Pi^{\vee}=\{\,& \alpha_i^{\vee}=E_{ii}-E_{i+1,i+1}+\delta_{i0}K\ (i\in I) \, \}, \\
\Pi=\{\,& \alpha_i=\epsilon_i-\epsilon_{i+1} \ (i\in I)\, \}, \\
\Delta^+ =\{\,& \epsilon_i-\epsilon_j\ (i,j\in I, i<j) \,\},
\end{align*}
where $\epsilon_i\in\h^*$ is determined by $\langle
\epsilon_i,E_{jj}\rangle=\delta_{ij}$ and
$\langle\epsilon_i,K\rangle=0$.  Furthermore, for $i\in I$, let
$\Lambda^{\mf a}_i$ be the $i$-th fundamental weight of
$\widehat{\gl}_\infty$. That is, we have
\begin{align*}
\Lambda^{\mf a}_i=
\begin{cases}
\Lambda^{\mf a}_0-\sum_{k=i+1}^0\epsilon_k, & \text{if  $i< 0$}, \\
\Lambda^{\mf a}_0+\sum_{k=1}^i\epsilon_k, & \text{if  $i> 0$},
\end{cases}
\end{align*}
where $\Lambda^{\mf a}_0\in\h^*$ is determined by
$\langle\Lambda^{\mf a}_0,K\rangle=1$ and $\langle\Lambda^{\mf
a}_0,E_{jj}\rangle=0$, for all $j\in \Z$. Let $\rho_{c}\in\h^*$ be
determined by $\langle \rho_c,E_{jj}\rangle=-j$, for all $j\in\Z$,
and $\langle \rho_c,K\rangle=0$, so that we have $\langle
\rho_c,{\alpha}^{\vee}_i\rangle=1$, for all $i\in I$.

By assigning degree $0$ to the Cartan subalgebra and setting ${\rm
deg}E_{ij}=j-i$, we equip $\hgl_\infty$ with a $\Z$-gradation:
$\hgl_\infty=\bigoplus_{k\in\Z}(\hgl_\infty)_k$. This leads to the
following triangular decomposition:
\begin{equation*}
\hgl_\infty=(\hgl_\infty)_+\oplus (\hgl_\infty)_0\oplus
(\hgl_\infty)_-,
\end{equation*}
where $(\hgl_\infty)_{\pm}=\bigoplus_{k\in\pm\N}(\hgl_\infty)_k$ and
$(\hgl_\infty)_0=\mf{h}$.

\subsubsection{The Lie algebras $\mathfrak{b}_{\infty}$, $\mathfrak{c}_{\infty}$,
$\mathfrak{d}_{\infty}$}\label{rhoc:aux2} For convenience of the
reader we will briefly review the classical subalgebras of
$\widehat{\gl}_\infty$ (cf.~\cite{K}). For $\mf{x}\in\{\mathfrak{
b,c,d}\}$, let $\overline{\mf{x}}_{\infty}$ be the subalgebra of
${\gl}_\infty$ preserving the following bilinear form on
$\mathbb{C}^{\infty}$:
\begin{equation*}
 (e_i|e_j)=
\begin{cases}
 \delta_{i, -j}, & \text{if $\mf{x}=\mathfrak{b}$}, \\
(-1)^i\delta_{i,1-j}, & \text{if $\mf{x}=\mathfrak{c}$}, \\
 \delta_{i,1-j}, & \text{if $\mf{x}=\mathfrak{d}$},
\end{cases}\quad i,j\in\Z.
\end{equation*}
Let $\mf{x}_{\infty}=\overline{\mf{x}}_{\infty}\oplus \mathbb{C}K$
be the central extension of $\overline{\mf{x}}_{\infty}$ determined
by the restriction of the two-cocycle \eqnref{cocycle}. Then
$\mf{x}_{\infty}$ has a natural triangular decomposition induced
from $\widehat{\mathfrak{gl}}_{\infty}$:
\begin{equation*}
\mf{x}_{\infty}=(\mf{x}_{\infty})_+\oplus (\mf{x}_{\infty})_0 \oplus
(\mf{x}_{\infty})_-,
\end{equation*}
where $(\mf{x}_{\infty})_{\pm}=\mf{x}_{\infty}\cap
(\widehat{\mathfrak{gl}}_{\infty})_{\pm}$ and
$(\mf{x}_{\infty})_0=\mf{x}_{\infty} \cap
(\widehat{\mathfrak{gl}}_{\infty})_0$. For $i\in\mathbb{N}$, let
\begin{equation*}
\widetilde{E}_i=
\begin{cases}
E_{ii}-E_{-i,-i}, & \text{if $\mf{x}=\mathfrak{b}$}, \\
E_{ii}-E_{1-i,1-i}, & \text{if $\mf{x}=\mathfrak{c,d}$}.
\end{cases}
\end{equation*}
Note that
$\mathfrak{h}=(\mf{x}_{\infty})_0=\sum_{i\in\N}\C\widetilde{E}_i\oplus
\C K$. We may regard $\epsilon_i\in
(\widehat{\mathfrak{gl}}_{\infty})_0^*$ as an element in
$(\mf{x}_{\infty})_0^*$ via restriction so that
$\langle\epsilon_i,\widetilde{E}_j\rangle=\delta_{ij}$ for $i,j\in
\mathbb{N}$. Put $I=\mathbb{Z}_{+}$. Then we have \vskip 3mm

$\bullet$ $\mathfrak{b}_{\infty}$
\begin{align*}
\Pi^{\vee}=\{\,& \alpha_0^{\vee}=-2\widetilde{E}_1+2K, \
  \alpha_i^{\vee}=\widetilde{E}_i-\widetilde{E}_{i+1} \ (i\in\mathbb{N})\, \}, \\
\Pi=\{\,& \alpha_0=-\epsilon_1, \
 \alpha_i=\epsilon_i-\epsilon_{i+1} \ (i\in\mathbb{N})\, \}, \\
\Delta^+ =\{\,& \pm\epsilon_i-\epsilon_j\ ,\ -\epsilon_i\ (i,j\in\N,
i<j) \,\}.
\end{align*}

$\bullet$ $\mathfrak{c}_{\infty}$
\begin{align*}
\Pi^{\vee}=\{\,& \alpha_0^{\vee}=-\widetilde{E}_1+ K,  \
  \alpha_i^{\vee}=\widetilde{E}_i-\widetilde{E}_{i+1}  \ (i\in\mathbb{N})\, \}, \\
\Pi=\{\,& \alpha_0=-2\epsilon_1 , \
 \alpha_i=\epsilon_i-\epsilon_{i+1} \ (i\in\mathbb{N})\, \}, \\
\Delta^+ =\{\,& \pm\epsilon_i-\epsilon_j\ ,\ -2\epsilon_i\
(i,j\in\N, i<j) \,\}.
\end{align*}

$\bullet$ $\mathfrak{d}_{\infty}$
\begin{align*}
\Pi^{\vee}=\{\,& \alpha_0^{\vee}=-\widetilde{E}_1-\widetilde{E}_2+
2K, \   \alpha_i^{\vee}=\widetilde{E}_i-\widetilde{E}_{i+1}  \ (i\in\mathbb{N})\, \}, \\
\Pi=\{\,& \alpha_0=-\epsilon_1-\epsilon_2, \
 \alpha_i=\epsilon_i-\epsilon_{i+1} \ (i\in\mathbb{N})\, \}, \\
\Delta^+ =\{\,& \pm\epsilon_i-\epsilon_j\   (i,j\in\N, i<j) \,\}.
\end{align*}\vskip 3mm

For $i\in I$, we denote by $\La^\mf{x}_i$ the $i$-th fundamental
weight for $\mf{x}_{\infty}$, that is,
$\langle\La^\mf{x}_i,\alpha^{\vee}_j\rangle=\delta_{ij}$ ($j\in I$),
where $\La^\mf{x}_0\in \mf{h}^*$ is determined by
$\langle\La^\mf{x}_0,\widetilde{E}_i\rangle=0$ for $i\in\N$ and
$\langle\La^\mf{x}_0,K\rangle=r$ with $r=\hf, 1, \hf$, for
$\mf{x}=\mathfrak{b},\mathfrak{c},\mathfrak{d}$, respectively. In
fact, we have

\begin{align*}
\La^{\mf b}_i&=2\La^{\mf b}_0+\epsilon_1+\cdots+\epsilon_i, \ \ \
i\geq 1, \\
\La^{\mf c}_i&=\La^{\mf c}_0+\epsilon_1+\cdots+\epsilon_i, \ \ \ \ \
i\geq 1, \\
\La^{\mf d}_i &=
\begin{cases}
\La^{\mf d}_0+\epsilon_1, & \text{if $i=1$}, \\
2\La^{\mf d}_0+\epsilon_1+\cdots+\epsilon_i, & \text{if $i>1$}.
\end{cases}
\end{align*}
We let $\rho_c\in\h^*$ be determined by
\begin{equation*}
\begin{aligned}
&\langle \rho_c,\widetilde{E}_{j}\rangle=
\begin{cases}
-j+\hf, & \text{for $\mf{b}_{\infty}$}, \\
 -j, & \text{for $\mf{c}_{\infty},\mf{d}_{\infty}$},
\end{cases} j\in\N, \quad \
\langle \rho_c,K\rangle =
\begin{cases}
\ \, 0, & \text{for $\mf{b}_{\infty},\mf{c}_{\infty}$}, \\
-1, & \text{for $\mf{d}_{\infty}$}.
\end{cases}
\end{aligned}
\end{equation*}

\vskip 3mm

\subsection{Classical dual pairs on infinite-dimensional Fock spaces}\label{classical:dualpairs}
Let $\mf{g}$ be one of the Lie algebras given in
\secref{class:subalgebras}. Let $\La\in\h^*$ be given. By standard
arguments there is a unique irreducible highest weight
representation of $\G$ of highest weight $\La$, which will be
denoted by $L(\G,\La)$.

We fix a positive integer $\ell\geq 1$ and consider $\ell$ pairs of
free fermions $\psi^{\pm,i}(z)$ with $i=1,\cdots,\ell$. That is, we
have
\begin{align*}
\psi^{+,i}(z)&=\sum_{n\in\Z}\psi^{+,i}_nz^{-n-1},\quad\quad\
\psi^{-,i}(z)=\sum_{n\in\Z}\psi^{-,i}_nz^{-n},
\end{align*}
with non-trivial commutation relations
$[\psi^{+,i}_m,\psi^{-,j}_n]=\delta_{ij}\delta_{m+n,0}$. Let
$\F^\ell$ denote the corresponding Fock space generated by the
vaccum vector $|0\rangle$, which is annihilated by
$\psi^{+,i}_n,\psi^{-,i}_m$ for $n\ge 0$ and $m>0$.

We introduce a neutral fermionic field
$\phi(z)=\sum_{n\in\Z}\phi_nz^{-n-1}$ with non-trivial commutation
relations $[\phi_m,\phi_n]=\delta_{m+n,0}$. Denote by $\F^{\hf}$ the
Fock space of $\phi(z)$ generated by a vacuum vector that is
annihilated by $\phi_m$ for $m\geq 0$. We denote by $\F^{\ell+\hf}$
the tensor product of $\F^{\ell}$ and $\F^{\hf}$.

We assume that $x_n$ ($n\in\Z$) and $z_i$ ($i=1,\ldots,\ell$) are
formal indeterminates.

\subsubsection{The $(\widehat{\gl}_\infty,{\rm GL}(\ell))$-duality}

Let ${\rm GL}(\ell)$ be the general linear group of rank $\ell$. The
space of complex $\ell\times\ell$ matrices forms the Lie algebra
$\mf{gl}(\ell)$ of ${\rm GL}(\ell)$. We denote by $e_{ij}$ ($1\leq
i,j\leq \ell$) the elementary matrix with 1 in the $i$-th row and
$j$-th column and $0$ elsewhere. Then $H=\sum_{i}\C e_{ii}$ is a
Cartan subalgebra, while $\sum_{i\le j}\C e_{ij}$ is a Borel
subalgebra containing $H$. Recall that an irreducible rational
representations of $\mf{gl}(\ell)$ (or ${\rm GL}(\ell)$) is
determined by its highest weight $\la\in H^*$ with $\langle \la,
e_{ii}\rangle=\la_i\in\Z$ $(1\le i\le \ell)$ and $\la_1\geq\ldots
\geq \la_{\ell}$. Denote by $V^\la_{{\rm GL}(\ell)}$ the irreducible
representation corresponding to $\la$. Identifying $\la$ with
$(\la_1,\ldots,\la_{\ell})$ the irreducible rational representations
of ${\rm GL}(\ell)$ are parameterized by
\begin{equation*}
{\mc P}({\rm GL}(\ell)):=\{\,\la=(\la_1,
\cdots,\la_\ell)\,|\,\lambda_i\in\Z, \ \la_1\geq\ldots \geq
\la_{\ell}\,\},
\end{equation*}
which is precisely the set of generalized partitions of length
$\ell$.

\begin{prop} \label{duality} \cite{F} {\rm (cf.~\cite[Theorem 3.1]{W1})} There exists an action of
$\widehat{\gl}_\infty\times{\rm GL}(\ell)$ on $\F^\ell$.
Furthermore, under this joint action, we have
\begin{equation} \label{eq:dual}
\F^\ell\cong\bigoplus_{\la\in {\mc P}({\rm GL}(\ell))}
L(\hgl_\infty,\La^{\mf a}(\la))\otimes V_{{\rm GL}(\ell)}^\la ,
\end{equation}
where $\Lambda^{\mf a}(\la)=\ell\La^{\mf
a}_0+\sum_{j\in\Z}\la'_j\epsilon_j=\sum_{i=1}^\ell \Lambda^{\mf
a}_{\la_i}$.
\end{prop}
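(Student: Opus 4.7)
\emph{The plan} is to realize the actions of $\hgl_\infty$ and $\gl(\ell)$ by explicit normal-ordered quadratic expressions in the fermion modes, identify joint highest weight vectors indexed by $\la\in\mc{P}({\rm GL}(\ell))$, and then establish completeness of the resulting decomposition via a character identity. The result is due to Frenkel; I would follow the standard Howe-duality outline for the dual pair $(\hgl_\infty,{\rm GL}(\ell))$ on fermionic Fock space, adapted to the generalized-partition labeling introduced above.

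\textbf{Step 1: commuting actions.} For each matrix unit $E_{ij}\in\gl_\infty$, define the action on $\F^\ell$ by
\begin{equation*}
E_{ij}\longmapsto \sum_{k=1}^{\ell}\;:\!\psi^{+,k}_{-i}\psi^{-,k}_{j}\!:
\end{equation*}
(up to the index shifts dictated by the vacuum conventions) and let $K$ act as multiplication by $\ell$; for each matrix unit $e_{ij}\in\gl(\ell)$, define
\begin{equation*}
e_{ij}\longmapsto \sum_{n\in\Z}\;:\!\psi^{+,i}_{-n}\psi^{-,j}_{n}\!:.
\end{equation*}
A direct computation with the fermionic commutation relations shows that the first family recovers the cocycle $\alpha$ of \eqnref{cocycle} at level $\ell$, that the second defines a locally finite $\gl(\ell)$-action whose zero modes $e_{ii}$ have integer eigenvalues, and that operators of the two families mutually commute, since their brackets reduce to contractions in which the ``flavor'' index $k$ and the ``$\gl(\ell)$-slot'' indices $i,j$ lie in disjoint ranges and cancel. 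Local finiteness allows the $\gl(\ell)$-action to integrate to a rational ${\rm GL}(\ell)$-action on $\F^\ell$.

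\textbf{Step 2: highest weight vectors.} For each $\la\in\mc{P}({\rm GL}(\ell))$, I would exhibit an explicit joint highest weight vector $v_\la\in\F^\ell$ obtained by applying to $\vac$ a product of fermionic creation modes whose indices encode the rows of non-negative length and the columns of non-positive length of the generalized Young diagram of $\la$ (concretely, one pairs the $k$-th flavor with the $k$-th row $\la_k$ of $\la$ and chooses $\psi^{+,k}$ or $\psi^{-,k}$ modes according to the sign of $\la_k$). A direct check shows that $v_\la$ is annihilated by the positive parts of both $\hgl_\infty$ and $\gl(\ell)$, that its $\gl(\ell)$-weight equals $\la$, and that its $\h$-weight equals
\begin{equation*}
\ell\La^{\mf{a}}_0+\sum_{j\in\Z}\la'_j\epsilon_j \;=\; \sum_{i=1}^{\ell}\La^{\mf{a}}_{\la_i} \;=\; \La^{\mf{a}}(\la).
\end{equation*}
This yields an injective $(\hgl_\infty\times{\rm GL}(\ell))$-homomorphism
\begin{equation*}
\bigoplus_{\la\in\mc{P}({\rm GL}(\ell))} L(\hgl_\infty,\La^{\mf{a}}(\la))\otimes V^\la_{{\rm GL}(\ell)}\hookrightarrow\F^\ell.
\end{equation*}

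\textbf{Step 3: completeness.} The hard step, and the expected main obstacle, is surjectivity. My plan is a character comparison: the graded character of $\F^\ell$ factorizes as a product over the $\ell$ fermion pairs, and the Jacobi triple product identity (equivalently the boson--fermion correspondence) rewrites this product as $\sum_\la\mathrm{ch}\,L(\hgl_\infty,\La^{\mf{a}}(\la))\cdot\mathrm{ch}\,V^\la_{{\rm GL}(\ell)}$, matching the candidate decomposition exactly. Since $\F^\ell$ is completely reducible as a ${\rm GL}(\ell)$-module with finite-dimensional joint weight spaces, and the submodule produced in Step 2 already captures every ${\rm GL}(\ell)$-isotypic component in full (by the character match coupled with the irreducibility of $L(\hgl_\infty,\La^{\mf{a}}(\la))$ at positive integral level), the inclusion must be an equality, giving \eqnref{eq:dual}.
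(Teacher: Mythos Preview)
The paper does not supply its own proof of this proposition; it is quoted as a known result with citations to Frenkel~\cite{F} and Wang~\cite[Theorem~3.1]{W1}, and is used thereafter only as input (the character identity \eqnref{combid-classical1} is \emph{derived from} \eqnref{eq:dual}, not used to prove it). So there is nothing in the paper to compare your argument against.

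Your outline is the standard one and is essentially correct, but Step~3 as written is slightly loose. Invoking ``the Jacobi triple product identity'' to match characters presupposes that you already know ${\rm ch}\,L(\hgl_\infty,\La^{\mf a}(\la))$, which for general $\la$ is the Weyl--Kac formula rather than anything as elementary as Jacobi. The cleaner route (and the one in the cited references) is: first treat $\ell=1$, where the charge decomposition $\F^1=\bigoplus_{n\in\Z}\F^1_{(n)}$ gives the $\hgl_\infty$-irreducibles $L(\hgl_\infty,\La^{\mf a}_n)$ directly and Jacobi's identity is exactly the character check; then for general $\ell$ use $\F^\ell\cong(\F^1)^{\otimes\ell}$, complete reducibility of tensor products of integrable highest-weight $\hgl_\infty$-modules, and the fact that ${\rm GL}(\ell)$ acts by permuting the tensor factors (so the joint decomposition is governed by Schur--Weyl-type combinatorics). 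Your explicit highest-weight vectors in Step~2 then identify the summands, and no independent knowledge of ${\rm ch}\,L(\hgl_\infty,\La^{\mf a}(\la))$ is needed.
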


Computing the trace of the operator
$\prod_{n\in\Z}x_n^{E_{nn}}\prod_{i=1}^\ell z_i^{e_{ii}}$ on both
sides of \eqnref{eq:dual}, we obtain the following identity:
\begin{equation}\label{combid-classical1}
\prod_{i=1}^\ell\prod_{n\in\N}(1+x_nz_i)(1+x_{1-n}^{-1}z^{-1}_{i})=\sum_{\la\in
{\mc P}({\rm GL}(\ell))}{\rm ch}L(\hgl_\infty,\Lambda^{\mf
a}(\la)){\rm ch}V^\la_{{\rm GL}(\ell)}.
\end{equation}

\subsubsection{The $(\mathfrak{c}_{\infty},{\rm Sp}(2\ell))$-duality}
Let ${\rm Sp}(2\ell)$ denote the symplectic group, which is the
subgroup of ${\rm GL}(2\ell)$ preserving the non-degenerate
skew-symmetric bilinear form on $\C^{2\ell}$ given by
$$\left(
    \begin{array}{cc}
      0 & J_{\ell} \\
      -J_{\ell} & 0 \\
    \end{array}
  \right),
$$
where $J_\ell$ is the following $\ell\times\ell$ matrix:
\begin{equation}\label{side:diagonal}
J_\ell=\begin{pmatrix}
0&0&\cdots&0&1\\
0&0&\cdots&1&0\\
\vdots&\vdots&\vdots&\vdots&\vdots\\
0&1&\cdots&0&0\\
1&0&\cdots&0&0\\
\end{pmatrix}.
\end{equation}
Let $\mf{sp}(2\ell)$ be the Lie algebra of ${\rm Sp}(2\ell)$. We
take as a Borel subalgebra of $\mf{sp}(2\ell)$ the subalgebra of
upper triangular matrices, and as a Cartan subalgebra $H$ the
subalgebra spanned by $\tilde{e}_{i}=e_{ii}-e_{2\ell+1-i,2\ell+1-i}$
($1\leq i\leq \ell$). A finite-dimensional irreducible
representation of $\mf{sp}(2\ell)$ is determined by its highest
weight $\la\in H^*$ with $\langle \la,
\tilde{e}_{i}\rangle=\la_i\in\Z_+$ $(1\le i\le \ell)$ and
$\la_1\geq\ldots \geq \la_{\ell}$. Furthermore each such
representation lifts to an irreducible representation of ${\rm
Sp}(2\ell)$, which is denoted by $V_{{\rm Sp}(2\ell)}^{\lambda}$. We
identify $\la$ with $(\la_1,\ldots,\la_{\ell})$, and put
\begin{equation*}
{\mc P}({\rm Sp}(2\ell)):=\{\,\la=(\la_1,
\cdots,\la_\ell)\,|\,\lambda_i\in\Z_+, \ \la_1\geq\ldots \geq
\la_{\ell}\,\},
\end{equation*}
which is the set of partitions of length no more than $\ell$.

\begin{prop} \label{duality-c} \cite[Theorem 3.4]{W1}
There exists an action of $\mathfrak{c}_{\infty}\times{\rm
Sp}(2\ell)$ on $\F^\ell$. Furthermore, under this joint action, we
have
\begin{equation} \label{eq:dual-c}
\F^\ell\cong\bigoplus_{\la\in {\mc P}({\rm
Sp}(2\ell))}L(\mathfrak{c}_{\infty},\La^{\mf c}(\la))\otimes V_{{\rm
Sp}(2\ell)}^\la,
\end{equation}
where $\Lambda^{\mf c}(\la)=\ell\La^{\mf c}_0+\sum_{k\geq
1}\la'_k\epsilon_k=(\ell-j)\Lambda^{\mf c}_{0}+\sum_{k=1}^j
\Lambda^{\mf c}_{\la_k}$ and $j$ is the number of non-zero parts of
$\la$.
\end{prop}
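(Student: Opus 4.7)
The plan is to follow the standard Howe-duality template adapted to the Fock space $\F^\ell$: construct the joint action of $\mf{c}_\infty\times{\rm Sp}(2\ell)$, exhibit a joint highest-weight vector $v_\la$ for every $\la\in\mc{P}({\rm Sp}(2\ell))$, and use a character identity to verify that these exhaust $\F^\ell$ with the stated multiplicities.

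The $\mf{c}_\infty$-action comes by restriction from the $\hgl_\infty$-action of \propref{duality}, and the ${\rm GL}(\ell)$-action of that same proposition already commutes with $\hgl_\infty$, hence furnishes the Levi $\gl(\ell)\subset\mf{sp}(2\ell)$ inside the commutant of $\mf{c}_\infty$. To enlarge this to ${\rm Sp}(2\ell)$ I would adjoin additional operators bilinear in the $\psi^{+,i}_n$'s (and dually in the $\psi^{-,i}_n$'s), contracted through the $\mf{c}_\infty$-invariant form $(e_p|e_q)=(-1)^p\delta_{p,1-q}$, of the schematic shape $\sum_{n\in\Z}(-1)^n{:}\psi^{+,i}_n\psi^{+,j}_{1-n}{:}$ for $1\le i\le j\le\ell$. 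A direct calculation with the canonical anticommutation relations then shows that these operators commute with the $\mf{c}_\infty$-generators and, together with the $\gl(\ell)$ already in hand, close into a copy of $\mf{sp}(2\ell)$ whose Cartan coincides with the $\tilde e_i=e_{ii}-e_{2\ell+1-i,2\ell+1-i}$ already present.

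For every $\la=(\la_1,\dots,\la_\ell)\in\mc{P}({\rm Sp}(2\ell))$ I would produce an explicit joint highest-weight vector $v_\la\in\F^\ell$, obtained by applying to the vacuum $\vac$ a product of creation modes $\psi^{+,i}_{-k}$ prescribed by the Young diagram of $\la$. One checks directly that $v_\la$ is annihilated by the positive parts of both $\mf{c}_\infty$ and $\mf{sp}(2\ell)$, has ${\rm Sp}(2\ell)$-weight $\la$, and has $\mf{c}_\infty$-weight $\ell\La^{\mf c}_0+\sum_{k\ge 1}\la'_k\epsilon_k$, matching the claimed $\La^{\mf c}(\la)$.

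To see that no further joint highest-weight vectors occur, I would compare characters. The trace of $\prod_n x_n^{E_{nn}}\prod_i z_i^{\tilde e_i}$ on $\F^\ell$ is obtained from \eqnref{combid-classical1} by imposing the $\mf{c}_\infty$-Cartan identification $x_{1-n}=x_n^{-1}$ (forced by $\widetilde{E}_n=E_{nn}-E_{1-n,1-n}$), which collapses the product to
\begin{equation*}
{\rm ch}\,\F^\ell=\prod_{i=1}^\ell\prod_{n\ge 1}(1+x_nz_i)(1+x_nz_i^{-1}).
\end{equation*}
A classical Littlewood identity expands this as $\sum_{\la\in\mc{P}({\rm Sp}(2\ell))}{\rm ch}\,V^\la_{{\rm Sp}(2\ell)}\cdot\chi_\la(x)$, and identifying the $x$-factor with ${\rm ch}\,L(\mf{c}_\infty,\La^{\mf c}(\la))$ via the highest-weight datum of the previous paragraph and the Weyl denominator for $\mf{c}_\infty$ delivers the decomposition. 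Complete reducibility is automatic, since $\F^\ell$ is a direct sum of unitary highest-weight $\mf{c}_\infty$-modules and ${\rm Sp}(2\ell)$ is reductive. I expect the main obstacle to lie in the construction of the $\mf{sp}(2\ell)$-raising and lowering operators and the proof that they close: the bilinears in fermions must be normal-ordered carefully, and one must check that no spurious central term shows up on the finite-dimensional side.
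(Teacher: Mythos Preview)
The paper does not supply its own proof of this proposition: it is quoted from \cite[Theorem 3.4]{W1} and used as a black box.  Your outline is essentially the strategy carried out in that reference---restrict the $\hgl_\infty$-action on $\F^\ell$ to $\mf{c}_\infty$, enlarge the commuting $\gl(\ell)$ to $\mf{sp}(2\ell)$ by normal-ordered fermion bilinears built from the $\mf{c}_\infty$-invariant pairing, produce explicit joint highest-weight vectors indexed by $\mc{P}({\rm Sp}(2\ell))$, and invoke unitarity for complete reducibility---so there is no alternative argument in the paper to compare against.

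One caution on your final step.  The identity you invoke as a ``classical Littlewood identity'' is exactly what the present paper records as \eqnref{combid-classical-c}, and in this paper it is derived \emph{from} \propref{duality-c}, not fed into it.  If you want the character comparison to close the argument, you must either (i) know ${\rm ch}\,L(\mf{c}_\infty,\La^{\mf c}(\la))$ independently via the Weyl--Kac formula (legitimate, since the derived algebra of $\mf{c}_\infty$ is Kac--Moody and the modules are integrable) and then verify the resulting symmetric-function identity directly, or (ii) bypass characters entirely by showing that the joint highest-weight vectors you exhibit in step~3 are \emph{all} of them, e.g.\ by analysing the $\mf{c}_\infty\times\mf{sp}(2\ell)$-singular vectors in $\F^\ell$ against the explicit Fock basis.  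Route (ii) is what Wang does, and it makes your step~3 the substantive one; the character identity is then a corollary.  As written, your step~4 risks circularity unless you commit to route (i) and carry out the Weyl--Kac computation.
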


Computing the trace of the operator $\prod_{n\in
\N}x_n^{\widetilde{E}_{n}}\prod_{i=1}^\ell z_i^{\tilde{e}_{i}}$ on
both sides of \eqnref{eq:dual-c}, we obtain the following identity:
\begin{equation}\label{combid-classical-c}
\prod_{i=1}^\ell\prod_{n\in\N}(1+x_nz_i)(1+x_{n}z^{-1}_{i})=\sum_{\la\in
{\mc P}({\rm Sp}(2\ell))} {\rm
ch}L(\mathfrak{c}_{\infty},\Lambda^{\mf c}(\la)){\rm ch}V^\la_{{\rm
Sp}(2\ell)}.
\end{equation}

\subsubsection{The $(\mathfrak{d}_{\infty},{\rm O}(m))$-duality}
Let $m$ be a positive integer greater than $1$. We define $\ell$ by
$m=2\ell$ when $m$ is even, and $m=2\ell+1$ when $m$ is odd. Let
${\rm O}(m)$ denote the orthogonal group which is the subgroup of
${\rm GL}(m)$ preserving  the non-degenerate symmetric bilinear form
on $\C^{m}$ determined by $J_m$ of \eqnref{side:diagonal}. Let
$\mf{so}(m)$ be the Lie algebra of ${\rm O}(m)$. We take as a Cartan
subalgebra $H$ of $\mf{so}(m)$ the subalgebra spanned by
$\tilde{e}_{i}=e_{ii}-e_{m+1-i,m+1-i}$ ($1\leq i\leq \ell$), while
we take as the Borel subalgebra the subalgebra of upper triangular
matrices.

For $\la\in H^*$ let us identify $\la$ with
$(\la_1,\ldots,\la_{\ell})$, where
$\la_i=\langle\la,\tilde{e}_{i}\rangle$ for $1\le i\le \ell$. Then a
finite-dimensional irreducible representation of $\mf{so}(2\ell)$ is
determined by its highest weight $\la$ satisfying the condition
$\la_1\geq \ldots\geq \la_{\ell-1}\geq |\la_{\ell}|$ with either
$\la_i\in\Z$ or else $\la_i\in\hf+\Z$, for $1\le i\le \ell$.
Furthermore it lifts to a representation of ${\rm SO}(2\ell)$ if and
only if $\la_i\in\Z$ for $1\le i\le \ell$. Also a finite-dimensional
irreducible representation of $\mf{so}(2\ell+1)$ is determined by
its highest weight $\la$ satisfying the conditions $\la_1\geq
\ldots\geq \la_{\ell}$ with either  $\la_i\in\Z_+$ or else
$\la_i\in\hf+\Z_+$, for $1\le i\le \ell$. Furthermore it lifts to a
representation of ${\rm SO}(2\ell+1)$ if and only if $\la_i\in\Z_+$
for $1\le i\le \ell$.

We put
\begin{equation*}
{\mc P}({\rm O}(m)):=\{\,\la=(\la_1,
\cdots,\la_m)\,|\,\lambda_i\in\Z_+, \ \la_1\geq\ldots \geq \la_{m},\
\la'_1+\la'_2\leq m\,\}.
\end{equation*}
For $\la\in {\mc P}({\rm O}(m))$, let $\tilde{\la}$  be the
partition obtained from $\la$ by replacing its first column with
$m-\la'_1$.

Suppose that $m=2\ell$ and
$\lambda=(\la_1,\ldots,\la_{\ell},0,\ldots,0)\in {\mc P}({\rm
O}(2\ell))$ is given. If  $\la_{\ell}>0$, let $V_{{\rm
O}(2\ell)}^{\lambda}$ be the irreducible ${\rm O}(2\ell)$-module,
which as an $\mf{so}(2\ell)$-module, is isomorphic to the direct sum
of irreducible representations of highest weights
$(\la_1,\ldots,\la_{\ell})$ and $(\la_1,\ldots,-\la_{\ell})$. If
$\la_{\ell}=0$, let $V_{{\rm O}(2\ell)}^{\lambda}$ denote the ${\rm
O}(2\ell)$-module that as an $\mf{so}(2\ell)$-module is isomorphic
to the irreducible representation of highest weight
$(\la_1,\cdots,\la_{\ell-1},0)$, and on which the element
$\tau=\sum_{i\not=\ell,\ell+1}{e_{ii}}+e_{\ell,\ell+1}+e_{\ell+1,\ell}\in
{\rm O}(2\ell)\setminus {\rm SO}(2\ell)$ transforms trivially on
highest weight vectors. Set $V_{{\rm
O}(2\ell)}^{\tilde{\lambda}}=V_{{\rm O}(2\ell)}^{\lambda}\otimes
{\rm det}$, where ${\rm det}$ is the one-dimensional non-trivial
representation of ${\rm O}(2\ell)$.

Suppose that $m=2\ell+1$ and
$\lambda=(\la_1,\ldots,\la_{\ell},0,\ldots,0)\in {\mc P}({\rm
O}(m))$ is given. Let $V_{{\rm O}(2\ell+1)}^{\lambda}$ be the
irreducible ${\rm O}(2\ell+1)$-module isomorphic  to the irreducible
representation of highest weight $(\la_1,\ldots,\la_{\ell})$ as an
$\mf{so}(2\ell+1)$-module, on which $-I_{m}$ acts trivially. Here
$I_m$ is the $m\times m$ identity matrix. Also, we let $V_{{\rm
O}(2\ell+1)}^{\tilde{\lambda}}=V_{{\rm O}(2\ell+1)}^{\lambda}\otimes
{\rm det}$ (cf. e.g.~\cite{BT, H} for more details).

\begin{prop} \label{duality-d} \cite[Theorems 3.2 and 4.1]{W1} There exists an action of $\mathfrak{d}_{\infty}\times{\rm
O}(m)$ on $\F^{\frac{m}{2}}$. Furthermore under this joint action we
have
\begin{equation} \label{eq:dual-d}
\F^{\frac{m}{2}}\cong\bigoplus_{\la\in {\mc P}({\rm
O}(m))}L(\mathfrak{d}_{\infty},\La^{\mf d}(\la))\otimes V_{{\rm
O}(m)}^\la,
\end{equation}
where $\Lambda^{\mf d}(\la)=m\Lambda^{\mf d}_{0}+\sum_{k\geq 1}
\la'_k\epsilon_k$.
\end{prop}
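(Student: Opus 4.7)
The proof follows the same pattern as Propositions~\ref{duality} and \ref{duality-c}, so the plan is to construct the commuting actions explicitly and then match the decomposition via characters.

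First, I would realize $\ov{\F}^{m/2}$ as a module over a Clifford algebra in a way that exposes the $\mathrm{O}(m)$-symmetry. When $m=2\ell$, the modes $\{\psi^{\pm,i}_n\}_{i=1,\ldots,\ell,\,n\in\Z}$ span an infinite-dimensional space carrying the symmetric form given by the anti-commutator $\{\psi^{+,i}_m,\psi^{-,j}_n\}=\delta_{ij}\delta_{m+n,0}$. Grouping the $2\ell$ fermionic families along the internal index, one gets a basis $\{u^{(a)}_n\}$ with $a=1,\ldots,2\ell$ and $n\in\Z$ satisfying $\{u^{(a)}_m,u^{(b)}_n\}=(J_{2\ell})_{ab}\,\delta_{m+n,0}$, where $J_{2\ell}$ is the form of \eqnref{side:diagonal}. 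For $m=2\ell+1$ one appends the neutral field $\phi(z)$, whose modes supply an extra orthogonal direction so that the Clifford generators become an $(m\cdot\infty)$-dimensional orthogonal space with form $J_m\otimes J_\infty$. Any $g\in \mathrm{O}(m)$ acts on the internal index, and this linear automorphism of the orthogonal generators lifts to an automorphism of the Clifford algebra and, since $\ov\F^{m/2}$ is its unique irreducible module of the prescribed vacuum type, to an action on $\ov\F^{m/2}$ (after extending by the element of $\mathrm{O}(2\ell)\setminus\mathrm{SO}(2\ell)$ realized on the vacuum as in the definition of $V^{\tilde\la}_{\mathrm{O}(m)}$, or by $-I_m$ in the odd case).

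Next I would identify the commuting $\mathfrak{d}_\infty$-action. The point is that the ``infinite'' index $n\in\Z$ carries the symmetric form $(e_i|e_j)=\delta_{i,1-j}$ preserved by $\ov{\mf d}_\infty\subset\gl_\infty$; the corresponding quadratic expressions in the modes $u^{(a)}_n$ built contracting on the internal orthogonal form $J_m$ are $\mathrm{O}(m)$-invariant, yielding a representation of $\ov{\mf d}_\infty$ on $\ov\F^{m/2}$ that commutes with $\mathrm{O}(m)$. The central extension then comes from normal ordering, producing exactly the level-$m$ representation whose vacuum weight is $m\La^{\mf d}_0$.

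The third step is to match the isotypic decomposition. Using the Heisenberg–Clifford correspondence (bosonization of fermions) one computes the character of $\ov\F^{m/2}$ as an $H\times(\mf d_\infty)_0$-module and obtains a product that is precisely the classical Littlewood–type identity
\begin{equation*}
\prod_{i=1}^\ell\prod_{n\in\N}(1+x_nz_i)(1+x_nz_i^{-1})\cdot\prod_{n\in\N}(1+x_n)^{\varepsilon}
=\sum_{\la\in\mc P(\mathrm{O}(m))}\mathrm{ch}\,L(\mf d_\infty,\La^{\mf d}(\la))\,\mathrm{ch}\,V^\la_{\mathrm{O}(m)},
\end{equation*}
with $\varepsilon=0$ for $m$ even and $\varepsilon=1$ for $m$ odd. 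Since the modules $L(\mf d_\infty,\La^{\mf d}(\la))$ are irreducible with pairwise distinct highest weights, and the $V^\la_{\mathrm{O}(m)}$ are irreducible and pairwise non-isomorphic, the character identity pins down the decomposition up to multiplicities; multiplicities are then forced to be one by double centralizer / first fundamental theorem for $\mathrm{O}(m)$ applied to the Clifford realization.

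The delicate point will be Step 1 in the odd case: carefully extending the visible $\mathrm{O}(2\ell)$-symmetry to $\mathrm{O}(2\ell+1)$ via the neutral fermion and verifying that the element $\tau\in\mathrm{O}(2\ell)\setminus\mathrm{SO}(2\ell)$ acts on vacuum vectors according to the prescription distinguishing $V^\la_{\mathrm{O}(2\ell)}$ from $V^{\tilde\la}_{\mathrm{O}(2\ell)}$. Everything else is either a direct character computation or an appeal to standard invariant theory, so the result will follow as stated in \cite[Theorems 3.2 and 4.1]{W1}.
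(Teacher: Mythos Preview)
The paper does not supply its own proof of this proposition; it is quoted verbatim from \cite[Theorems 3.2 and 4.1]{W1} and used as input. So there is nothing in the present paper to compare your argument against beyond the citation itself.

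That said, your outline is essentially the strategy carried out in \cite{W1}: realize the Clifford generators as an orthogonal space $\C^m\otimes\C^\infty$ with form $J_m\otimes(\cdot|\cdot)$, let $\mathrm{O}(m)$ act on the first tensor factor and $\mf d_\infty$ via $\mathrm{O}(m)$-invariant normal-ordered quadratics, and then pin down the isotypic decomposition. Two small remarks. First, you write $\ov{\F}^{m/2}$ throughout, but the statement concerns the purely fermionic Fock space $\F^{m/2}$; the barred notation in this paper is reserved for the super Fock spaces of \secref{super:dualpairs}. Second, your last step slightly reverses the logic of \cite{W1}: there the multiplicity-freeness is established first (the joint highest weight vectors are shown to be in bijection with $\mc P(\mathrm{O}(m))$ by an explicit analysis, equivalently by the see-saw/dual pair argument), and only afterwards are the character identities \eqnref{combid-classical-d1}--\eqnref{combid-classical-d2} read off as consequences. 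Trying to deduce the decomposition from the character identity alone, as you suggest, would require independently knowing the characters ${\rm ch}\,L(\mf d_\infty,\La^{\mf d}(\la))$, which is circular in this context.
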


Suppose that $m=2\ell$. Computing the trace of the operator
$\prod_{n\in \N}x_n^{\widetilde{E}_{n}}\prod_{i=1}^\ell
z_i^{\tilde{e}_{i}}$ on both sides of \eqnref{eq:dual-d}, we obtain
\begin{equation}\label{combid-classical-d1}
\prod_{i=1}^\ell\prod_{n\in\N}(1+x_nz_i)(1+x_{n}z^{-1}_{i})=\sum_{\la\in
{\mc P}({\rm O}(2\ell))} {\rm
ch}L(\mathfrak{d}_{\infty},\Lambda^{\mf d}(\la)){\rm ch}V^\la_{{\rm
O}(2\ell)}.
\end{equation}

Suppose that $m=2\ell+1$. Let $\epsilon$ be the eigenvalue of $-I_m$
on ${\rm O}(2\ell+1)$-modules satisfying $\epsilon^2=1$. From the
computation of the trace of $\prod_{n\in
\N}x_n^{\widetilde{E}_{n}}\prod_{i=1}^\ell
z_i^{\tilde{e}_{i}}(-I_m)$ on both sides of \eqnref{eq:dual-d}, we
obtain
\begin{equation}\label{combid-classical-d2}
\prod_{i=1}^\ell\prod_{n\in\N}(1+\epsilon x_nz_i)(1+ \epsilon
x_{n}z^{-1}_{i})(1+\epsilon x_n)=\sum_{\la\in {\mc P}({\rm
O}(2\ell+1))} {\rm ch}L(\mathfrak{d}_{\infty},\Lambda^{\mf
d}(\la)){\rm ch}V^\la_{{\rm O}(2\ell+1)}.
\end{equation}
Note that ${\rm ch}V^\la_{{\rm O}(2\ell)}$ is a Laurent polynomial
in $z_1,\ldots,z_{\ell}$ and ${\rm ch}V^\la_{{\rm O}(2\ell)}={\rm
ch}V^{\tilde{\la}}_{{\rm O}(2\ell)}$, while ${\rm ch}V^\la_{{\rm
O}(2\ell+1)}$ is the Laurent polynomial in
$z_1,\ldots,z_{\ell},\epsilon$ and ${\rm ch}V^\la_{{\rm
O}(2\ell+1)}=\epsilon~{\rm ch}V^{\tilde{\la}}_{{\rm O}(2\ell+1)}$.

\subsubsection{The $(\mathfrak{b}_{\infty},{\rm Pin}(2\ell))$-duality}

The Lie group ${\rm Pin}(2\ell)$ is a double cover of ${\rm
O}(2\ell)$, with ${\rm Spin}(2\ell)$ as the inverse image of ${\rm
SO}(2\ell)$ under the covering map (see e.g.~\cite{BT}). An
irreducible representation of ${\rm Spin}(2\ell)$ that does not
factor through ${\rm SO}(2\ell)$ is an irreducible representation of
$\mf{so}(2\ell)$ of highest weight of the form
\begin{equation}\label{hwforSpin}
(\la_1+\hf,\ldots,\la_{\ell-1}+\hf,\la_{\ell}+\hf), \ \ \text{or} \
\ (\la_1+\hf,\ldots,\la_{\ell-1}+\hf,-\la_{\ell}-\hf),
\end{equation}
where $\la_1\geq \ldots\geq \la_{\ell}$ with $\la_i\in\Z_+$ for
$1\le i\le \ell$. We put
\begin{equation*}
{\mc P}({\rm Pin}(2\ell)):=\{\,\la=(\la_1,
\cdots,\la_\ell)\,|\,\lambda_i\in\Z_+, \ \la_1\geq\ldots \geq
\la_{\ell}\,\}.
\end{equation*}
For $\lambda\in {\mc P}({\rm Pin}(2\ell))$, let us denote by
$V^{\lambda}_{{\rm Pin}(2\ell)}$ the irreducible representation of
${\rm Pin}(2\ell)$ induced from the irreducible representation of
${\rm Spin}(2\ell)$ whose highest weight is given by either of the
two weights in \eqref{hwforSpin}. When restricted to ${\rm
Spin}(2\ell)$, $V^{\lambda}_{{\rm Pin}(2\ell)}$ decomposes into a
direct sum of two irreducible representations of highest weights
given by those in \eqref{hwforSpin}.

\begin{prop} \label{duality-b} \cite[Theorem 3.3]{W1} There exists an action of $\mathfrak{b}_{\infty}\times{\rm
Pin}(2\ell)$ on $\F^{\ell}$. Furthermore, under this joint action,
we have
\begin{equation} \label{eq:dual-b}
\F^{\ell}\cong\bigoplus_{\la\in {\mc P}({\rm
Pin}(2\ell))}L(\mathfrak{b}_{\infty},\La^{\mf b}(\la))\otimes
V_{{\rm Pin}(2\ell)}^\la,
\end{equation}
where $\Lambda^{\mf b}(\la)=2\ell\Lambda^{\mf b}_{0}+\sum_{k\geq 1}
\la'_k\epsilon_k$.
\end{prop}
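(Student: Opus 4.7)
The plan is to follow the standard blueprint for Howe dualities on fermionic Fock spaces: exhibit two commuting actions, identify joint highest weight vectors, and confirm the decomposition by character comparison against an appropriate combinatorial identity.

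First I would construct the $\mathfrak{b}_\infty$-action on $\F^\ell$. One builds the generators of $\overline{\mf b}_\infty$, viewed as the subalgebra of $\gl_\infty$ preserving the form $(e_i|e_j)=\delta_{i,-j}$, as normal-ordered quadratic expressions in the fermion modes $\psi^{\pm,i}_n$ (after identifying $\F^\ell$ with the Fock space generated by $2\ell$ neutral fermions). The restriction of the $2$-cocycle \eqnref{cocycle} to $\overline{\mf b}_\infty$ supplies the one-dimensional central extension, giving the full action of $\mathfrak{b}_\infty$.

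Next I would construct the commuting ${\rm Pin}(2\ell)$-action. The zero modes $\psi^{\pm,i}_0$ (for $i=1,\ldots,\ell$) generate a Clifford algebra $\mathrm{Cl}(2\ell)$ acting faithfully on $\F^\ell$, and ${\rm Pin}(2\ell)$ sits inside the unit group of $\mathrm{Cl}(2\ell)$; its natural action on $\F^\ell$ is exactly the spin representation. This action extends the ${\rm GL}(\ell)$-action from \propref{duality}, adjoined to a representative of the non-trivial coset of ${\rm Pin}(2\ell)/{\rm Spin}(2\ell)$ --- concretely a reflection in a vector, implemented by a suitable linear combination of zero modes. Commutativity with $\mathfrak{b}_\infty$ reduces to the statement that bilinear operators preserving the form $(e_i|e_j)=\delta_{i,-j}$ centralize the Clifford action of the zero modes, which is checked on generators.

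Third, I would extract the decomposition in two stages. Stage one: identify the joint highest weight vectors in $\F^\ell$ by exhibiting them explicitly as products of Schur-type polynomials in the creation modes acting on a zero-mode spinor vector; direct computation gives their weights as $(\La^{\mf b}(\la),\la)$ with $\La^{\mf b}(\la)=2\ell\La^{\mf b}_0+\sum_{k\ge 1}\la'_k\epsilon_k$, where the coefficient $2\ell$ of $\La^{\mf b}_0$ reflects $\langle\La^{\mf b}_0,K\rangle=\hf$ together with the fact that the vacuum has $K$-eigenvalue $\ell$. Stage two: close the argument via character comparison. Applying the trace of $\prod_{n\in\N}x_n^{\widetilde{E}_n}\prod_{i=1}^\ell z_i^{\tilde e_i}$ to $\F^\ell$ gives an explicit infinite product on the left, while the right side becomes $\sum_\la {\rm ch}\,L(\mathfrak{b}_\infty,\La^{\mf b}(\la))\,{\rm ch}\,V^\la_{{\rm Pin}(2\ell)}$. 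A Littlewood-type identity, in which the ${\rm Pin}$-spinor character appears as the expansion coefficient, matches both sides term by term, confirming that the summands are exactly those indexed by $\la\in\mc{P}({\rm Pin}(2\ell))$ and each appears with multiplicity one.

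The main obstacle, I expect, is the careful book-keeping around the spinorial contribution $2\ell\La^{\mf b}_0$ and distinguishing ${\rm Pin}(2\ell)$ from ${\rm Spin}(2\ell)$ and ${\rm O}(2\ell)$: the Fock space is a genuine Pin-module because the zero-mode Clifford algebra acts faithfully, so one must verify that the parity element of ${\rm Pin}(2\ell)\setminus{\rm Spin}(2\ell)$ is implemented and that the irreducibles arising are indexed by all of $\mc{P}({\rm Pin}(2\ell))$ rather than by a smaller index set associated to ${\rm Spin}(2\ell)$ or ${\rm O}(2\ell)$.
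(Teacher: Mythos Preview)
The paper does not give its own proof of \propref{duality-b}; it simply cites \cite[Theorem 3.3]{W1} and records the statement for later use. So there is nothing in the present paper to compare your sketch against beyond the citation itself.

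That said, your outline is essentially the approach taken in \cite{W1}: construct the $\mathfrak{b}_\infty$-action via normal-ordered quadratics in the fermion modes, realize ${\rm Pin}(2\ell)$ through the Clifford algebra generated by zero modes, verify commutativity, locate joint highest weight vectors, and confirm the decomposition by a character identity. Your remarks about the spinorial shift $2\ell\La^{\mf b}_0$ and the distinction between ${\rm Pin}$, ${\rm Spin}$, and ${\rm O}$ are on point. If anything, the character-comparison step is not strictly needed once the joint highest weight vectors are found and the $\mathfrak{b}_\infty$-modules are known to be integrable (so that complete reducibility and the Weyl--Kac character formula already determine the decomposition); in \cite{W1} the character identity \eqnref{combid-classical-b} is rather a \emph{consequence} of the duality than an ingredient of its proof.
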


Taking the trace of the operator $\prod_{n\in
\N}x_n^{\widetilde{E}_{n}}\prod_{i=1}^\ell z_i^{\tilde{e}_{i}}$ on
both sides of \eqnref{eq:dual-b}, gives
\begin{equation}\label{combid-classical-b}
\prod_{i=1}^\ell\prod_{n\in\N}(z_i^{\hf}+z_i^{-\hf})(1+x_nz_i)(1+x_{n}z^{-1}_{i})=\sum_{\la\in
{\mc P}({\rm Pin}(2\ell))} {\rm
ch}L(\mathfrak{b}_{\infty},\Lambda^{\mf b}(\la)){\rm ch}V^\la_{{\rm
Pin}(2\ell)}.
\end{equation}

In what follows we mean by $(\mf{g},G)$ one of the dual pairs of
\secref{classical:dualpairs}, and by $\mf{x}\in\{\mf{a,b,c,d}\}$ the
type of $\G$.

\subsection{$\mathfrak u_-$-homology groups of ${\mf
g}$-modules}\label{classical:homology} Let
$\Delta:=\Delta^+\cup\Delta^-$ be the set of roots of $\G$, where
$\Delta^-=-\Delta^+$. Let
$\Delta_S^\pm:=\Delta^\pm\cap\big{(}\sum_{j\not=0}\Z
\alpha_j\big{)}$ and
$\Delta^\pm(S):=\Delta^\pm\setminus\Delta^\pm_S$. Denote by
$\G_\alpha$ the root space corresponding to $\alpha\in\Delta$. Set
\begin{equation}\label{parabolic}
\begin{aligned}
&{\mf u}_{\pm} := \sum_{\alpha\in \Delta^\pm(S)}\G_{\alpha}, \quad
{\mf l}  := \sum_{\alpha\in
\Delta_S^+\cup\Delta_S^-}\G_{\alpha}\oplus \h.
\end{aligned}
\end{equation}
Then we have $\G=\mathfrak u_+\oplus\mathfrak l\oplus\mathfrak u_-$.
The Lie algebras $\mathfrak l$ and $\G$ share the same Cartan
subalgebra $\h$. For $\mu\in\h^*$ we denote  by $L(\mathfrak l,\mu)$
the irreducible highest weight representation of $\mathfrak l$ with
highest weight $\mu$. For $\G=\hgl_\infty$ we denote by $\mathcal
P^+_{\mathfrak l}$ the set of
$\mu=\sum_{i\in\Z}\mu_i\epsilon_i+c\La^{\mf a}_0\in\h^*$ with
$(\mu_1,\mu_2,\cdots)\in\mc{P}^+$,
$(-\mu_{0},-\mu_{-1},\cdots)\in\mc{P}^+$, and $c\in\C$. For
$\G=\mf{b}_\infty,\mf{c}_\infty, \mf{d}_\infty$, let
$\mc{P}^+_{\mathfrak l}$ denote the set of
$\mu=\sum_{i\in\N}\mu_i\epsilon_i+c\La^x_0\in\h^*$ with
$(\mu_1,\mu_2,\cdots)\in\mc{P}^+$ and $c\in\C$.

Let $V$ be a $\G$-module and let $C_k(\mathfrak
u_-;V):=\Lambda^k(\mathfrak u_-)\otimes V$ be the space of the
$k$-th chains ($k\in\Z_+$). Denote the boundary operator by
$d_k:C_k(\mathfrak u_-;V)\rightarrow C_{k-1}(\mathfrak u_-;V)$ (see
e.g. \cite{GL,J,L}). We have a complex
\begin{equation}\label{chains}
\cdots\stackrel{d_{k+1}}{\longrightarrow} C_k(\mathfrak
u_-;V)\stackrel{d_k}{\longrightarrow}C_{k-1}(\mathfrak
u_-;V)\stackrel{d_{k-1}}{\longrightarrow}\cdots\stackrel{d_2}{\longrightarrow}\mathfrak
u_-\otimes
V\stackrel{d_1}{\longrightarrow}V\stackrel{d_0}{\longrightarrow} 0,
\end{equation}
with $d_kd_{k+1}=0$ and homology groups ${\rm H}_k(\mathfrak
u_-;V):={\rm ker}\,d_k/{\rm im}\,d_{k+1}$ for $k\in\Z_+$.

The subalgebra $\mathfrak l$ acts on $V$ by restriction, while it
acts on $\mathfrak u_-$ via the adjoint action.  Thus $\mathfrak l$
acts on the chains.  Furthermore the $\mathfrak l$-action commutes
with boundary operators and hence the homology group ${\rm
H}_k(\mathfrak u_-;V)$ is an $\mathfrak l$-module, for all
$k\in\Z_+$. In order to describe these groups in the case when
$V=L(\G,\La^\mf{x}(\la))$, for $\la\in \mathcal{P}(G)$, we introduce
further notation.

For $j\in I$, define simple reflections $\sigma_j$ by
\begin{equation*}
\sigma_j(\mu):=\mu-\langle\mu,{\alpha}^{\vee}_j\rangle\alpha_j,
\end{equation*}
where $\mu\in\h^*$. Let $W$ be the subgroup of ${\rm Aut}(\h^*)$
generated by the simple reflections, i.e. $W$ is the Weyl group of
$\G$. For each $w\in W$ we let $l(w)$ denote the length of $w$. We
have an action on $\h$ given by
$\sigma_j(h)=h-\langle\alpha_j,h\rangle {\alpha}^{\vee}_j$ for
$h\in\h$ so that $\langle w(\mu),w(h)\rangle=\langle\mu,h\rangle$,
for $w\in W$, $\mu\in\h^*$ and $h\in\h$. We also define
\begin{equation*}
w\circ\mu:=w(\mu+\rho_c)-\rho_c, \quad\mu\in\h^*,w\in W.
\end{equation*}
Consider $W_{0}$ the subgroup of $W$ generated by $\sigma_j$ with
$j\not=0$. Let
\begin{equation*}
W^{0}:=\{\,w\in W\,\vert\,
w(\Delta^-)\cap\Delta^+\subseteq\Delta^+(S)\,\}.
\end{equation*}
It is well-known that $W=W_0\, W^0$ and $W^0$ is the set of the
minimal length representatives of the right coset space
$W_0\backslash W$ (cf.~\cite{L}). For $k\in\Z_+$, set
$$W^0_k:=\{\,w\in W^0\,\vert\, l(w)=k\,\}.$$

Given $\lambda\in \mc{P}(G)$ it is easy to see that
$\langle\Lambda^\mf{x}(\la),{\alpha}^{\vee}_j\rangle\in\Z_+$, for
all $j\in I$. Since $w\in W^0$ implies that
$w^{-1}(\Delta^+_S)\subseteq\Delta^+$, we obtain $\langle w\circ
\Lambda^\mf{x}(\la),{\alpha}^{\vee}_j\rangle\in\Z_+$, for all $j\in
I\setminus\{0\}$. Since $\G$ is a generalized Kac-Moody algebra and
each $L(\G,\Lambda^\mf{x}(\la))$ is an integrable (=standard)
module, the description of the $\mathfrak l$-module structure of the
homology groups ${\rm H}_k({\mathfrak
u}_-;L(\G,\Lambda^\mf{x}(\la))$ in \cite{J} remains valid. It is
given as follows.

\begin{prop}\label{homologyclassical}\cite{J}
Let $\la\in\mc{P}(G)$ and $k\in\Z_+$.  We have, as
$\mathfrak{l}$-modules,
\begin{equation*}
{\rm H}_k({\mathfrak
u}_-;L(\G,\Lambda^\mf{x}(\la)))\cong\bigoplus_{w\in
W^0_k}L(\mathfrak l\,,w\circ \Lambda^\mf{x}(\la)).
\end{equation*}
\end{prop}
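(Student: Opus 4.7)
My plan is to invoke the Kostant-type homology theorem for integrable highest weight modules over generalized Kac--Moody algebras due to Jakobsen \cite{J}, which extends the Garland--Lepowsky--Kumar method from the Kac--Moody setting. The substantive task is therefore to verify that the hypotheses of this theorem are met; the structural argument itself can be recalled in outline.

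The first step is to verify integrability. The derived subalgebra of $\G$ is a generalized Kac--Moody algebra with simple root data as displayed in \secref{class:subalgebras}. From the explicit formulas for $\Lambda^\mf{x}(\la)$ given in \propref{duality}--\propref{duality-b} one reads off $\langle \Lambda^\mf{x}(\la), \alpha_j^\vee\rangle \in \Z_+$ for every $j \in I$; hence $\Lambda^\mf{x}(\la)$ is dominant integral and $L(\G, \Lambda^\mf{x}(\la))$ is integrable, which is precisely the hypothesis required.

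The proof in \cite{J} then follows a Casimir / Euler--Poincar\'e strategy. One introduces a Casimir operator $\Omega$ attached to an invariant bilinear form on $\G$; it acts on $L(\G, \mu)$ by the scalar $(\mu + \rho_c, \mu + \rho_c) - (\rho_c, \rho_c)$, and it extends to each chain space $C_k(\mf{u}_-; L(\G, \Lambda^\mf{x}(\la)))$ in a way that commutes with the differentials $d_k$. Consequently ${\rm H}_k$ decomposes as a direct sum over generalized eigenspaces of $\Omega$. A linkage argument then shows that any $\mf{l}$-highest weight $\mu$ for which $L(\mf{l}, \mu)$ occurs in ${\rm H}_k$ must satisfy $\mu + \rho_c \in W(\Lambda^\mf{x}(\la) + \rho_c)$, i.e., $\mu = w \circ \Lambda^\mf{x}(\la)$ for some $w \in W$; the $\mf{l}$-dominance of $\mu$ then forces $w \in W^0$. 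That precisely the elements $w \in W^0_k$ contribute to ${\rm H}_k$, each with multiplicity one, follows by comparing characters through the Euler--Poincar\'e principle: the alternating sum of chain characters equals the alternating sum over $W^0$ of characters of $L(\mf{l}, w \circ \Lambda^\mf{x}(\la))$, by the Weyl--Kac character formula for $L(\G, \Lambda^\mf{x}(\la))$ (valid by integrability) together with its parabolic refinement indexed by $W^0$.

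The main obstacle is purely technical: one must ensure that this classical machinery survives the passage to the infinite-rank generalized Kac--Moody setting. Local finiteness of $\Omega$, convergence of the relevant alternating character sums, and diagonalizability on chain spaces are all guaranteed by integrability of $L(\G, \Lambda^\mf{x}(\la))$ together with the finite-dimensionality of each weight space of $\mf{u}_-$. Once these points are verified, the argument is precisely the one in \cite{J}.
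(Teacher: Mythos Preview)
Your proposal is correct and matches the paper's approach exactly: the paper does not give a proof but simply notes (in the sentence preceding the proposition) that $\G$ is a generalized Kac--Moody algebra and $L(\G,\Lambda^\mf{x}(\la))$ is integrable, so the homology formula from \cite{J} applies; you do the same, with a helpful sketch of the Casimir/Euler--Poincar\'e argument behind that citation. One correction: the reference \cite{J} is to Jurisich, not Jakobsen.
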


We may write $w\circ\La^\mf{x}(\la)$ as
\begin{equation*}
w\circ\La^\mf{x}(\la)=
\begin{cases}
\sum_{j>0}(\la^+_w)_j\epsilon_j-\sum_{j\ge
0}(\la^-_w)_{j+1}\epsilon_{-j}+n\Lambda^{\mf a}_0, & \text{if $\mf{x}={\mf a}$}, \\
\sum_{j>0}(\la_w)_j\epsilon_j+n\Lambda^\mf{x}_0, & \text{otherwise}, \\
\end{cases}
\end{equation*}
where
$n=\langle\,\La^\mf{x}(\la),K\,\rangle/\langle\,\La^\mf{x}_0,K\,\rangle$.
Furthermore, $\la^{\pm}_w=((\la^{\pm}_w)_1,(\la^{\pm}_w)_2,\ldots)$
and $\la_w=((\la_w)_1,(\la_w)_2,\ldots)$ are partitions. Thus,
suppressing the action of $K$, we see that the character of the
$\mathfrak l$-module ${\rm H}_k({\mathfrak
u}_-;L(\G,\La^\mf{x}(\la)))$ is
\begin{equation}\label{homology:schur}
{\rm ch H}_k({\mathfrak u}_-;L(\G,\La^\mf{x}(\la)))=
\begin{cases}
\sum_{w\in W^0_k}s_{\la^+_w}(x_1,x_2,\cdots)
s_{\la^-_w}(x^{-1}_0,x^{-1}_{-1},\cdots), & \text{if
$\mf{x}={\mf a}$}, \\
\sum_{w\in W^0_k}s_{\la_w}(x_1,x_2,\cdots), & \text{otherwise},
\end{cases}
\end{equation}
where here and further $s_\mu(y_1,y_2,\cdots)$ denotes the Schur
function in the variables $y_1,y_2,\cdots$ associated to the
partition $\mu$. Now applying the Euler-Poincar\'e principle to
\eqnref{chains} one obtains
\begin{equation*}
\sum_{k=0}^\infty (-1)^k {\rm ch}C_k(\mathfrak
u_-;L(\G,\La^\mf{x}(\la)))= \sum_{k=0}^\infty (-1)^k {\rm ch
H}_k(\mathfrak u_-;L(\G,\La^\mf{x}(\la))).
\end{equation*}
Since $$\sum_{k=0}^\infty (-1)^k {\rm ch}C_k(\mathfrak
u_-;L(\G,\La^\mf{x}(\la)))={\rm
ch}L(\G,\La^\mf{x}(\la)){D^\mf{x}},$$ where
\begin{equation*}
D^\mf{x}=
\begin{cases}
\prod_{i,j}(1-x_{-i+1}^{-1}x_j), & \text{if
$\mf{x}={\mf a}$}, \\
\prod_{i}(1-x_i)\prod_{i< j}(1-x_ix_j), & \text{if $\mf{x}=\mf{b}$}, \\
\prod_{i}(1-x^2_i)\prod_{i< j}(1-x_ix_j), & \text{if
$\mf{x}=\mf{c}$},
\\ \prod_{i< j}(1-x_ix_j), & \text{if $\mf{x}=\mf{d}$},
\end{cases}
\end{equation*}
with $i,j\in\N$, we obtain
\begin{equation}\label{char:schur}
{\rm ch}L(\G,\La^\mf{x}(\la))=\frac{1}{D^\mf{x}}\sum_{k=0}^\infty
(-1)^k {\rm ch H}_k(\mathfrak u_-;L(\G,\La^\mf{x}(\la))).
\end{equation}

\section{Infinite-dimensional Lie superalgebras}\label{IDLSA}

\subsection{The Lie superalgebra $\widehat{\gl}_\dinfty$ and its subalgebras of classical
types}\label{ex:superalgebras}

Let us recall the infinite-dimensional Lie superalgebra
$\widehat{\gl}_{\infty|\infty}$ and the subalgebras
$\widehat{\mc{B}}$, $\widehat{\mc{C}}$, $\widehat{\mc{D}}$ of
classical types. We will say that these superalgebras are of types
$\mf{a,b,c,d}$, respectively. The following notation will also be
assumed throughout the paper.

\begin{itemize}
\item[$\cdot$] $\overline{\mathfrak{g}}$ : the Lie
superalgebra $\widehat{\gl}_{\infty|\infty}$, or  its subalgebra
$\widehat{\mc{B}}$, $\widehat{\mc{C}}$, $\widehat{\mc{D}}$,

\item[$\cdot$] $\overline{\mathfrak{h}}$ : a Cartan subalgebra of
$\overline{\mathfrak{g}}$,

\item[$\cdot$] $\overline{I}$ : the index set for simple roots,

\item[$\cdot$] $\overline{\Pi}=\{\,\beta_r\,|\,r\in \overline{I}\,\}$ : the set of
simple roots,

\item[$\cdot$] $\overline{\Pi}^{\vee}=\{\,\beta_r^{\vee}\,|\,r\in \overline{I}\,\}$ : the set of
simple coroots,

\item[$\cdot$] $\overline{\Delta}^{+}$ : the set of positive roots.



\end{itemize}

\subsubsection{The Lie superalgebra
$\widehat{\gl}_\dinfty$}\label{aux311}

Let $\C^{\dinfty}$ be the infinite-dimensional superspace over $\C$
with a basis $\{\,e_r\,\vert\, r\in\hf\Z\,\}$. We assume that ${\rm
deg}e_r=\bar{0}$, for $r\in\Z$, and ${\rm deg}e_r=\bar{1}$,
otherwise.  We may identify ${\rm End}(\C^{\infty|\infty})$ with the
Lie superalgebra of matrices $(a_{rs})$  ($r,s\in\hf\Z$). Let
$\gl_\dinfty$ denote the subalgebra consisting of $(a_{rs})$  with
$a_{rs}=0$ for all but finitely many $a_{rs}$'s. Denote by $E_{rs}$
the elementary matrix with $1$ at the $r$-th row and $s$-th column
and zero elsewhere. Denote by
$\widehat{\gl}_\dinfty=\gl_\dinfty\oplus\C K$ the central extension
of $\gl_\dinfty$ by a one-dimensional center $\C K$ given by the
$2$-cocycle
\begin{equation}\label{cocycle-super}
\beta(A,B):={\rm Str}([\bar{J},A]B),
\end{equation}
where $\bar{J}=\sum_{r\le 0}E_{rr}$. Here, for a matrix
$D=(d_{rs})$, the supertrace is defined by ${\rm
Str}D=\sum_{r\in\hf\Z}(-1)^{2r}d_{rr}$. Then
$\overline{\h}=\sum_{r\in\hf\Z}\C E_{rr}\oplus\C K$ is a Cartan
subalgebra. Put $\overline{I}=\hf\Z$. We have
\begin{align*}
\overline{\Pi}^{\vee}=\{\,& \beta_r^{\vee}=E_{rr}+E_{r+\hf,r+\hf}+\delta_{r0}K\ (r\in \overline{I}) \, \}, \\
\overline{\Pi}=\{\,& \beta_r=\delta_r-\delta_{r+\hf} \ (r\in \overline{I})\, \}, \\
\overline{\Delta}^+ =\{\,& \delta_r-\delta_s\ (r,s\in\ov{I}, r<s)
\,\},
\end{align*}
where $\delta_r\in\overline{\h}^*$ is determined by $\langle
\delta_r,E_{ss}\rangle=\delta_{rs}$ and
$\langle\delta_r,K\rangle=0$.

By assigning degree $0$ to the Cartan subalgebra and setting ${\rm
deg}E_{rs}=s-r$, we equip $\hgl_\dinfty$ with a $\hf\Z$-gradation:
$\hgl_\dinfty=\bigoplus_{k\in\hf\Z}(\hgl_\dinfty)_k$. This leads to
the triangular decomposition:
\begin{equation*}
\hgl_\dinfty=(\hgl_\dinfty)_+\oplus (\hgl_\dinfty)_0\oplus
(\hgl_\dinfty)_-,
\end{equation*}
where
$(\hgl_\dinfty)_{\pm}=\bigoplus_{k\in\pm\hf\N}(\hgl_\dinfty)_k$ and
$(\hgl_\dinfty)_0=\overline{\h}$.

\subsubsection{The Lie superalgebras $\widehat{\mc{B}}$, $\widehat{\mc{C}}$, $\widehat{\mc{D}}$}
Below we follow the presentations of \cite{CW2,LZ}. Let $L=L_{\ov
0}\oplus L_{\ov 1}$ be a Lie superalgebra, which is a subalgebra of
$\gl_{\dinfty}$. We say that $L$ preserves a bilinear form
$(\,\cdot\,|\,\cdot\,)$ on $\C^{\dinfty}$ if
\begin{equation*}
L_{\epsilon}=\{\,A\in
(\hgl_{\dinfty})_{\epsilon}\,|\,(Av|w)=-(-1)^{\epsilon|v|}(v|Aw),\
v,w\in \C^{\dinfty}\,\}, \quad \epsilon=\bar{0},\bar{1},
\end{equation*}
where $|v|$ denotes the parity of $v$ in $\C^{\dinfty}$
(cf.~\cite{K1}).

First we define the Lie superalgebra $\mc{B}$ to be the subalgebra
of $\gl_{\dinfty}$ preserving the following super-symmetric bilinear
form on $\C^{\dinfty}$:
\begin{align*}
(e_i|e_j)&= (-1)^i\delta_{i,-j}, \quad\quad i,j\in\Z, \\
(e_r|e_s)&= (-1)^{r+\hf}\delta_{r,-s}, \quad\quad r,s\in\hf+\Z, \\
(e_i|e_r)&= 0, \quad\quad i\in\Z,\ r\in \hf+\Z.
\end{align*}

Let $\mc{A}$ be the subalgebra of $\mf{gl}_{\dinfty}$ consisting of
matrices $A=(a_{ij})$ such that $a_{ij}=0$ if $i=0$ or $j=0$. We
define the Lie superalgebras $\mc{C}$ and $\mc{D}$ to be the
subalgebras of $\mc{A}$ preserving the following
super-skew-symmetric bilinear forms on $\C^{\dinfty}$, respectively:
\begin{align*}
(e_i|e_j)&=
\begin{cases}
{\rm sgn}(i)\delta_{i,-j},   & \text{for $\mc{C}$}, \\
\delta_{i,-j}, & \text{for $\mc{D}$},
\end{cases} \quad\quad i,j\in\Z\setminus\{0\}, \\
(e_r|e_s)&=
\begin{cases}
\delta_{r,-s},   & \text{for $\mc{C}$}, \\
{\rm sgn}(r)\delta_{r,-s}, & \text{for $\mc{D}$},
\end{cases}  \quad\quad r,s\in\hf+\Z, \\
(e_i|e_r)&= 0, \quad\quad i\in\Z,\ r\in \hf+\Z.
\end{align*}

Now let $\mc{X}$ be one of $\mc{B,C,D}$. We define
$\widehat{\mc{X}}$ to be the central extension of $\mc{X}$ given by
the restriction of the two-cocycle \eqnref{cocycle-super}. Then
$\widehat{\mc{X}}$ has a natural triangular decomposition induced
from $\hgl_{\dinfty}$:
\begin{equation*}
\widehat{\mc{X}}=\widehat{\mc{X}}_+\oplus \widehat{\mc{X}}_0\oplus
\widehat{\mc{X}}_-,
\end{equation*}
where
$\widehat{\mc{X}}_0=\overline{\mathfrak{h}}=\sum_{r\in\hf\N}\C\widetilde{E}_r\oplus
\C K$ is a Cartan subalgebra, with
$\widetilde{E}_r=E_{rr}-E_{-r,-r}$ for $r\in \hf\N$. We may regard
$\delta_r\in (\hgl_{\dinfty})_0^*$ as an element in
$\widehat{\mc{X}}_0^*$ via restriction so that
$\langle\delta_r,\widetilde{E}_s\rangle=\delta_{rs}$ for $r,s\in
\hf\mathbb{N}$. Put $\overline{I}=\hf\Z_{+}$. Then we have\vskip 3mm

$\bullet$ $\widehat{\mc{B}}$
\begin{align*}
\ov{\Pi}^{\vee}=\{\,& \beta_0^{\vee}=\widetilde{E}_{\hf}+K, \
  \beta_r^{\vee}=\widetilde{E}_r+\widetilde{E}_{r+\hf} \ (r\in\frac{1}{2}\N)\, \}, \\
\ov{\Pi}=\{\,& \beta_0=-\delta_{\hf}, \
 \beta_r=\delta_r-\delta_{r+\hf} \ (r\in\hf\N)\, \}, \\
\ov{\Delta}^+ =\{\,& -\delta_r\ (r\in\hf\N),-2\delta_r\
(r\in\hf+\Z_+),\ \pm\delta_r-\delta_s\ (r,s\in\hf\N, \ r<s ) \,\}.
\end{align*}

$\bullet$ $\widehat{\mc{C}}$
\begin{align*}
\ov{\Pi}^{\vee}=\{\,&
\beta_0^{\vee}=\widetilde{E}_{\hf}-\widetilde{E}_{1}+2K, \
  \beta_r^{\vee}=\widetilde{E}_r+\widetilde{E}_{r+\hf} \ (r\in\hf\N)\, \}, \\
\ov{\Pi}=\{\,& \beta_0=-\delta_{\hf}-\delta_1, \
 \beta_r=\delta_r-\delta_{r+\hf} \ (r\in\hf\N)\, \}, \\
\ov{\Delta}^+ =\{\,& -2\delta_n\ (n\in\N),\ \pm\delta_r-\delta_s\
(r,s\in\hf\N, \ r<s ) \,\}.
\end{align*}

$\bullet$ $\widehat{\mc{D}}$
\begin{align*}
\ov{\Pi}^{\vee}=\{\,& \beta_0^{\vee}=\widetilde{E}_{\hf}+K, \
  \beta_r^{\vee}=\widetilde{E}_r+\widetilde{E}_{r+\hf} \ (r\in\hf\N)\, \}, \\
\ov{\Pi}=\{\,& \beta_0=-2\delta_{\hf}, \
 \beta_r=\delta_r-\delta_{r+\hf} \ (r\in\hf\N)\, \}, \\
\ov{\Delta}^+ =\{\,& -2\delta_r\ (r\in\hf\N),\ \pm\delta_r-\delta_s\
(r,s\in\hf\N, \ r<s ) \,\}.
\end{align*}

\subsection{Tensor representations of $\gl(m|n)$ and hook Schur
functions}\label{tensor:repn}

\subsubsection{Hook Schur functions}
Let $\la\in\mathcal P^+$ and let $s_\la(x_1,y_1,x_2,y_2,\cdots)$ be
the Schur function corresponding to $\la$.  We can write (see
e.g.~\cite{M})
\begin{equation*}
s_\la(x_1,y_1,x_2,y_2,x_3,y_3,\cdots)=\sum_{\mu\subseteq\la}s_{\mu}(x_1,x_2,\cdots)s_{\la/\mu}(y_1,y_2,\cdots).
\end{equation*}
Define the hook Schur function corresponding to $\la$ \cite{BR,S} to
be
\begin{equation*}
HS_\la(x_1,y_1,x_2,y_2,\cdots)=\sum_{\mu\subseteq\la}s_{\mu}(x_1,x_2,\cdots)s_{(\la/\mu)'}(y_1,y_2,\cdots).
\end{equation*}
From the definition it is not difficult to check the following (cf.~
\cite[Chapter I $\S$5 Ex.~23]{M}).
\begin{lem}\label{aux:hookSchur} For a partition $\la$ we have
\begin{equation*}
HS_{\la'}(x_1,y_1,x_2,y_2,\cdots)=HS_{\la}(y_1,x_1,y_2,x_2,\cdots).
\end{equation*}
\end{lem}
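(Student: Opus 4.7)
The identity is a standard symmetry of hook (or supersymmetric) Schur functions, and the cleanest approach is to reduce it to the fact that the Macdonald involution $\omega$ sends $s_\lambda$ to $s_{\lambda'}$. First I would reinterpret the definition of $HS_\lambda$ via the skew Schur expansion: writing $X=(x_1,x_2,\ldots)$ and $Y=(y_1,y_2,\ldots)$ and regarding $X\cup Y$ as a single alphabet, the formula
$$s_\lambda(X\cup Y)=\sum_{\mu\subseteq\lambda}s_\mu(X)\, s_{\lambda/\mu}(Y)$$
together with $\omega(s_{\lambda/\mu})=s_{(\lambda/\mu)'}$ yields
$$HS_\lambda(x_1,y_1,x_2,y_2,\ldots)=\omega_Y\bigl[s_\lambda(X\cup Y)\bigr],$$
where $\omega_Y$ denotes the involution $\omega$ acting only on symmetric functions in $Y$ while leaving the $X$-variables untouched.

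Next I would exploit the multiplicativity of $\omega$ with respect to disjoint alphabets: since $p_n(X\cup Y)=p_n(X)+p_n(Y)$ and $\omega$ is characterized by $p_n\mapsto(-1)^{n-1}p_n$, it follows that $\omega$ on $\Lambda(X\cup Y)$ factors as $\omega=\omega_X\,\omega_Y$ with $\omega_X$ and $\omega_Y$ commuting involutions. Applying $\omega(s_\lambda)=s_{\lambda'}$ and $\omega_Y^2=\mathrm{id}$ one computes
$$HS_{\lambda'}(x_1,y_1,x_2,y_2,\ldots)=\omega_Y s_{\lambda'}(X\cup Y)=\omega_Y\omega_X\omega_Y s_\lambda(X\cup Y)=\omega_X s_\lambda(X\cup Y),$$
and the reformulation of the previous paragraph (with the roles of $X$ and $Y$ interchanged) identifies the last expression with $HS_\lambda(y_1,x_1,y_2,x_2,\ldots)$, which is the claim.

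I do not anticipate any serious obstacle; the only routine check is that $\omega$ acts independently on the two alphabets, which is immediate from its definition on power sums. Should one prefer to avoid the involution altogether, the same identity drops out of the Jacobi--Trudi formulas $HS_\lambda=\det(H_{\lambda_i-i+j})$ and $HS_{\lambda'}=\det(E_{\lambda_i-i+j})$, where $H_n(x,y)$ and $E_n(x,y)$ are the coefficients of $t^n$ in $\prod_i(1+y_it)/(1-x_it)$ and $\prod_i(1+x_it)/(1-y_it)$ respectively; swapping $X$ with $Y$ interchanges these two generating functions, and the desired identity follows at once. Either route takes only a few lines, consistent with the parenthetical reference to Macdonald's exercise.
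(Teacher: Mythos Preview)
Your argument is correct. The paper does not actually prove this lemma: it merely says ``From the definition it is not difficult to check the following (cf.~\cite[Chapter I \S5 Ex.~23]{M})'' and moves on. Your approach via $HS_\lambda=\omega_Y\bigl[s_\lambda(X\cup Y)\bigr]$ together with the factorization $\omega=\omega_X\omega_Y$ is precisely the content of that Macdonald exercise, so you have supplied the details the paper leaves implicit; the alternative Jacobi--Trudi route you sketch is also valid and equally short.
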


\subsubsection{Irreducible tensor representations of
$\gl(m|n)$}\label{standardborel}

Below we present some basic results for the Lie superalgebra
$\gl(m|n)$, for $m\in \N$ and $n\in\N\cup\{\infty\}$, that will be
used in the sequel.

Consider the space $\C^{m|n}$ spanned by basis elements $e_i$ with
$i\in\{-m,\cdots,-1\}\cup\{1,2,\cdots,n\}$, for $n$ finite, and
$i\in\{-m,\cdots,-1\}\cup\N$, for $n=\infty$.  We assume that ${\rm
deg}e_i=\bar{0}$ for $i<0$, and ${\rm deg}e_i=\bar{1}$ for $i>0$.
The Lie superalgebra $\gl(m|n)$ is defined similarly as in
\secref{aux311}. Denote the elementary matrices by $\{E_{ij}\}$.
Here in \secref{standardborel} our Borel subalgebra is $\sum_{i\le
j}\C E_{ij}$ with Cartan subalgebra $\overline{\h}=\sum_{j}\C
E_{jj}$. Let $\delta_i\in\overline{\h}^*$ be defined by
$\langle\delta_{i},E_{jj}\rangle=\delta_{ij}$.  We choose a
symmetric bilinear form $(\cdot|\cdot)_s$ on $\overline{\h}^*$ such
that $(\delta_i|\delta_j)_s=-{\rm sgn}(i)\delta_{ij}$. For
$\mu\in\ov{\h}^*$ we denote by $L(\gl(m|n),\mu)$ the irreducible
highest weight module of highest weight $\mu$.

In this paragraph let us assume that $n$ is finite.  It is
well-known \cite{BR, S} that the tensor powers of the standard
module $\C^{m|n}$ are completely reducible. The highest weights of
the irreducible $\gl(m|n)$-modules that appear as irreducible
components in a tensor power of $\C^{m|n}$ are parameterized by the
set
\begin{equation*}
\mc{P}^+_{m|n}:=\{(\la_{-m},\la_{-m+1},\cdots,\la_{-1},\la_1,\la_2,\cdots)\in\mc{P}^+\vert\la_1\le
n\}.
\end{equation*}
To be more precise, let us write
$\la^{-}=(\la_{-m},\cdots,\la_{-1})$, $\la^{+}=(\la_1,\la_2,\cdots)$
and $\la=(\la^{-}|\la^{+})$.  The irreducible $\gl(m|n)$-modules
that appear in $(\C^{m|n})^{\otimes k}$ have highest weights
precisely of the form
\begin{equation}\label{lambda:transpose}
\la^\natural=\sum_{i=-m}^{-1}\la_i\delta_i+\sum_{j\ge
1}\la'_j\delta_j,\quad|\la|=k.
\end{equation}
Such a module $L(\gl(m|n),\la^\natural)$ will be called an {\em
irreducible tensor module}. Furthermore, putting $e^{\delta_i}=x_i$
($i<0$) and $y_j=e^{\delta_j}$ ($j>0$) the character of
$L(\gl(m|n),\la^\natural)$ is given by \cite{BR, S}
\begin{equation}\label{hookschur}
\sum_{\mu\subseteq\la}s_\mu(x_{-m},\cdots,x_{-1})s_{(\la/\mu)'}(y_1,y_2,\cdots).
\end{equation}

For $n\in\N\cup\{\infty\}$ let $X^{m|n}$ denote the set of sequences
of $m+n$ integers, which we index by the set
$\{-m,\cdots,-1,1,\cdots,n\}$ for $n\in\mathbb{N}$, and
$\{-m,\cdots,-1\}\cup\mathbb{N}$ for $n=\infty$. We assume that a
sequence in $X^{m|n}$ has only finitely many non-zero entries. Let
$X_+^{m|n}\subseteq X^{m|n}$ be the subset consisting of sequences
of the form $\mu=(\mu^-|\mu^+)$, where
$\mu^-=(\mu_{-m},\cdots,\mu_{-1})$ is a generalized partition of
length $m$ and $\mu^{+}=(\mu_1,\mu_2,\ldots)$ is a partition. We may
regard $\mu\in X^{m|n}$ as the element $\sum_{i\geq -m,~ i\neq
0}\mu_i\delta_i\in\ov{\h}^*$.

Consider the category $\mathcal O^+_{m|n}$ of $\ov{\h}$-semisimple
$\gl(m|n)$-modules consisting of objects $M$ that have composition
series of the form
\begin{equation*}
0\subseteq M_1\subseteq M_2\subseteq M_3\subseteq\ldots
\end{equation*}
with $\bigcup_{i=1}^\infty M_i=M$, and such that each composition
factor is of the form $L(\gl(m|n),\mu)$, $\mu\in X_+^{m|n}$.
Furthermore we assume that each isotypic composition factor appears
with finite multiplicity.

Recall the truncation functor of \cite[Definition 4.4]{CWZ} defined
as follows. Let $n$ be finite and let $\mu\in X_+^{m|\infty}$. Note
that in the case $(\mu|\delta_{n+1})_s=0$ we may regard $\mu$ as a
weight of $\gl(m|n)$ by ignoring the zeros. Define for $M\in\mathcal
O^+_{m|\infty}$ a functor $\mf{tr}_{n}:\mathcal
O^+_{m|\infty}\rightarrow\mathcal O^+_{m|n}$ by setting
${\mf{tr}}_n(M)$ to be the linear span of all weight vectors $m\in
M$ such that $({\rm wt}(m)|\delta_j)_s=0$ for $j> n$, where ${\rm
wt}(m)$ denotes the weight of $m$. Clearly $\mf{tr}_n$ is an exact
functor. It is shown in \cite[Corollary 3.6]{CWZ} that
\begin{align}\label{truncation}
{\mf{tr}}_n(L(\gl(m|\infty),\mu))=&
\begin{cases}
L(\gl(m|n),\mu), & \text{ if
}(\mu|\delta_{n+1})_s=0,\\
0, & \text{ otherwise}.
\end{cases}
\end{align}

\begin{lem}\label{finite:to:infinite}
The $\gl(m|\infty)$-module $(\C^{m|\infty})^{\otimes k}$ is
completely reducible.  Furthermore we have
\begin{equation*}
(\C^{m|\infty})^{\otimes k}\cong\bigoplus_{\la}
L(\gl(m|\infty),\la^\natural)^{m_{\la}},
\end{equation*}
where the sum ranges over all partitions
$\la=(\la_{-m},\cdots,\la_{-1},\la_1,\la_2,\cdots)$ with $|\la|=k$,
and $m_{\la}\in\N$.
\end{lem}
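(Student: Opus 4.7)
The strategy is to deduce this infinite-rank statement from the classical Berele-Regev-Sergeev theorem (the finite-rank analogue discussed around \eqnref{hookschur}), using the truncation functor $\mf{tr}_n$ of \eqnref{truncation} as the transfer mechanism, together with the commuting superpermutation action of the symmetric group $S_k$ on $V := (\C^{m|\infty})^{\otimes k}$. Since $\C[S_k]$ is a finite-dimensional semisimple algebra, $V$ splits as an $S_k$-module into isotypic components, and hence as a $(\gl(m|\infty), S_k)$-bimodule one has
\begin{equation*}
V \;\cong\; \bigoplus_{\la \vdash k} M_\la \otimes S^\la,
\end{equation*}
where $S^\la$ is the Specht module and $M_\la := \mathrm{Hom}_{S_k}(S^\la, V)$ is a $\gl(m|\infty)$-module. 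The proof then reduces to identifying $M_\la \cong L(\gl(m|\infty), \la^\natural)$, so that the multiplicity is $m_\la = \dim S^\la$.

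For each $n \geq k$ I would apply $\mf{tr}_n$. Since the $S_k$-action is independent of $n$ and commutes with $\mf{tr}_n$, and directly from the definition $\mf{tr}_n(V) = (\C^{m|n})^{\otimes k}$, matching $S_k$-isotypic components with the finite-rank Berele-Regev-Sergeev decomposition yields $\mf{tr}_n(M_\la) \cong L(\gl(m|n), \la^\natural)$ for all $n \geq k$ (no $(m,n)$-hook restriction on $\la$ arises in this stable range). I would then construct an explicit highest weight vector of weight $\la^\natural$ inside $M_\la$, for instance by applying a Young symmetrizer to a standard tensor of basis vectors $e_{i_1} \otimes \cdots \otimes e_{i_k}$, and let $N_\la \subseteq M_\la$ be the $\gl(m|\infty)$-submodule it generates. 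By exactness of $\mf{tr}_n$ and \eqnref{truncation}, every sufficiently large truncation $\mf{tr}_n(N_\la)$ equals $\mf{tr}_n(M_\la) = L(\gl(m|n), \la^\natural)$; since every weight vector in $M_\la$ has support in some finite index range and hence lies in $\mf{tr}_n(M_\la)$ for $n$ large, this forces $N_\la = M_\la$ and rules out proper submodules (any proper submodule would, for $n$ large enough, contradict the irreducibility of $\mf{tr}_n(M_\la)$). Hence $M_\la \cong L(\gl(m|\infty), \la^\natural)$, and complete reducibility of $V$ follows from the isotypic decomposition above.

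The principal obstacle is establishing complete reducibility in the infinite-rank setting, since the usual proof in the finite-rank case relies on semisimplicity of the Sergeev/Schur superalgebra, which is not directly at our disposal. The truncation functor combined with the commuting $S_k$-action is precisely what lets us transfer semisimplicity from finite to infinite rank; the delicate technical point is to verify that the $S_k$-isotypic decomposition interacts correctly with $\mf{tr}_n$, i.e.\ that $\mf{tr}_n(M_\la \otimes S^\la)$ is exactly the $\la$-isotypic component of $(\C^{m|n})^{\otimes k}$. This amounts to noting that the $S_k$-equivariant projections are weight-preserving and therefore commute with the weight-space projection defining $\mf{tr}_n$.
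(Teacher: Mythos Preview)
Your argument is correct and takes a genuinely different route from the paper. The paper proceeds by an extension-splitting argument: it supposes a non-split short exact sequence $0 \to L(\gl(m|\infty),\gamma) \to E \to L(\gl(m|\infty),\mu) \to 0$ inside the tensor power, truncates to $\gl(m|n)$ for $n$ large, uses complete reducibility of $(\C^{m|n})^{\otimes k}$ to find a $\gl(m|n)$-highest weight vector $v_\mu \in E$, and then argues directly from weight considerations (the weight $\mu+\delta_i-\delta_{i+1}$ does not occur in $E$ for $i>n$) that $v_\mu$ is in fact a $\gl(m|\infty)$-highest weight vector, thereby splitting the sequence. Your Schur--Weyl approach instead exploits the commuting semisimple $S_k$-action to decompose $V$ \emph{a priori} into pieces $M_\la \otimes S^\la$, and then proves each $M_\la$ irreducible via truncation and Sergeev's finite-rank duality. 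Your method has the bonus of identifying the multiplicities explicitly as $m_\la = \dim S^\la$, which the paper's argument leaves implicit. The paper's approach, on the other hand, is more self-contained (it does not invoke super Schur--Weyl duality as a black box) and its extension-splitting technique is the template reused later to establish semisimplicity of the categories $\mc{O}^{++}_{m|n}$ in \thmref{cat:ss} and \thmref{catnon:ss}, where no symmetric-group action is available.
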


\begin{proof}
Suppose that in $(\C^{m|\infty})^{\otimes k}$ we have a non-trivial
extension of the form
\begin{equation}\label{aux:ext}
0\longrightarrow L(\gl(m|\infty),\gamma)\longrightarrow
E\longrightarrow L(\gl(m|\infty),\mu)\longrightarrow 0.
\end{equation}
Take a finite $n\gg 0$ such that
$(\gamma|\delta_{j})_s=(\mu|\delta_j)_s=0$, for $j\ge n+1$.
Applying the truncation functor $\mf{tr}_n$ upon \eqnref{aux:ext} we
get an exact sequence of $\gl(m|n)$-modules
\begin{equation*}
0\longrightarrow L(\gl(m|n),\gamma)\longrightarrow \mf{tr}_n
E\longrightarrow L(\gl(m|n),\mu)\longrightarrow 0.
\end{equation*}
Since $\mf{tr}_n(\C^{m|\infty})^{\otimes k}=(\C^{m|n})^{\otimes k}$,
$\mf{tr}_n E$ is an extension inside $(\C^{m|n})^{\otimes k}$, which
therefore must be a split extension.  Thus we may assume that there
exists $v_{\mu}\in E$ that is annihilated by all $E_{i,i+1}$, for
$i\le n-1$.  Now $E_{i,i+1}v_\mu$ has weight
$\mu+\delta_i-\delta_{i+1}$, for $i>n$.  But there are no such
weight spaces in $E$, being an extension of two highest weight
modules of highest weights $\gamma$ and $\mu$. Thus
$E_{i,i+1}v_\mu=0$, for $i\ge n$, and hence $v_\mu$ is a genuine
highest weight vector inside $E$. Now
$U(\gl(m|k))v_\mu=L(\gl(m|k),\mu)$, for all $k\ge n$, and hence
$U(\gl(m|\infty))v_\mu=L(\gl(m|\infty),\mu)$.
%
%
It follows that $U(\gl(m|\infty))v_\mu\cap
U(\gl(m|\infty))v_\gamma=0$, and thus \eqnref{aux:ext} is a split
extension.

The second statement is easy using the truncation functor again
together with the statement for finite $n$.
\end{proof}

\lemref{finite:to:infinite} allows us to define {\em irreducible
tensor representations} of $\gl(m|\infty)$ in an analogous fashion.
It follows that they are parameterized by $\mc P^+$. Furthermore the
character of $L(\gl(m|\infty), \la^\natural)$ is given by
\eqnref{hookschur} with $n=\infty$.

Assume that $n$ is finite. Let $\gamma\in X^{m|n}$. We denote the
central character corresponding to $\gamma$ by $\chi_\gamma$ (see
e.g.~\cite{Se, B}). Weights in $\ov{\h}^*$ corresponding to the same
central character can be characterized as follows.  Let $P$ be the
free abelian group with basis $\{\varepsilon_a\vert a\in\Z\}$.
Define
\begin{equation*}
\varepsilon_\gamma:=\sum_{i=-m}^{-1}\varepsilon_{\gamma_i-i}-
\sum_{j=1}^{n}\varepsilon_{-\gamma_j+j}.
\end{equation*}
Then $\chi_\gamma=\chi_\mu$ if and only if
$\varepsilon_\gamma=\varepsilon_\mu$  (see \cite[Lemma 4.18]{B}).
Returning to tensor modules, take a partition
$\la=(\la_{-m},\cdots,\la_{-1},\la_1,\la_2,\cdots)$ with $\la_1\le
n$. Consider
\begin{equation*}
\varepsilon_{\la^\natural}=\sum_{i=-m}^{-1}\varepsilon_{\la_i-i}-
\sum_{j=1}^{n}\varepsilon_{-\la'_j+j}.
\end{equation*}

\begin{lem}\label{auxlemma} Let $\la\in\mc{P}^+_{m|n}$ and
suppose that for some $i,j>0$ we have $\la_{-j}+j=-\la'_i+i$.  Then
we have $\la_{-j}=i-j$, $\la'_i=0$, and hence $\la'_s=0$, for
$s=i+1,\cdots,n$.
\end{lem}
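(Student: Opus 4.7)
The plan is a short monotonicity argument that hinges on unpacking the convention for $\la'$: in the tensor-module context of this section, the coordinate $\la'_i$ appearing in the highest weight $\la^\natural$ records the $i$-th column length of the tail $\la^+=(\la_1,\la_2,\ldots)$ (equivalently $\max(\la'_i-m,0)$ in terms of the usual conjugate of the whole partition). In particular, $\la'_i\ge 1$ is equivalent to the existence of some $k\ge 1$ with $\la_k\ge i$, equivalently $\la_1\ge i$. This equivalence is the only fact about $\la'$ needed below.

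After rewriting the hypothesis as $\la'_i=i-j-\la_{-j}$, the three conclusions of the lemma become interlocked. Once $\la'_i=0$ is proved, the identity $\la_{-j}=i-j$ is immediate from the hypothesis, and $\la'_s=0$ for $s=i+1,\ldots,n$ is automatic from the definition: if no positive-indexed part of $\la$ reaches height $i$, none can reach any $s\ge i$. So the only real task is to prove $\la'_i=0$, which I would do by contradiction. If $\la'_i\ge 1$, then $\la_1\ge i$ by the observation above, and the weak monotonicity $\la_{-m}\ge\la_{-m+1}\ge\cdots\ge\la_{-1}\ge\la_1\ge\cdots$ built into $\la\in\mc P^+$ forces $\la_{-j}\ge\la_{-1}\ge\la_1\ge i$. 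Combined with $j\ge 1$ this yields
\[
\la_{-j}+j \;\ge\; i+1 \;>\; i-\la'_i \;=\; -\la'_i+i,
\]
contradicting the hypothesis.

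The main obstacle: essentially none. The lemma is a purely combinatorial fact whose content is the one-line observation that crossing the bar between $\la_{-j}$ and $\la_1$ costs at least one unit of the indexing, so the tight equation $\la_{-j}+j=i-\la'_i$ can only be satisfied on an outside corner where the tail of $\la$ has already died out before column $i$. The only point requiring mild care is to identify the convention for $\la'$ (from the formula for $\la^\natural$ and the character identity \eqref{hookschur}) so that the implication $\la'_i\ge 1 \Rightarrow \la_1\ge i$ is available.
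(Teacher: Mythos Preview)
Your proposal is correct and is essentially the same argument as the paper's own proof. The paper also rewrites the hypothesis as $\la_{-j}+\la'_i=i-j$, assumes $\la'_i>0$, deduces $\la_{-j}\ge i$ (implicitly via $\la_1\ge i$ and monotonicity of the partition), and obtains a contradiction; your version simply makes the convention for $\la'_i$ and the chain $\la_{-j}\ge\la_{-1}\ge\la_1\ge i$ explicit.
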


\begin{proof}
The equality $\la_{-j}+j=-\la'_i+i$ implies that
$\la_{-j}+\la'_i=i-j$.  Now if $\la'_i>0$, then $\la_{-j}\ge i$. But
then the identity implies that $\la_i'=0$.
\end{proof}

\begin{lem} The map given by
$\la\mapsto\varepsilon_{\la^\natural}$ is an injection from
$\mathcal{P}^+_{m|n}$ to $P$.
\end{lem}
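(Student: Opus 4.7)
My plan is to recover $\la$ explicitly from $\varepsilon_{\la^\natural}$.

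First I would observe that the exponents $\la_{-k}+k$ (for $k=1,\ldots,m$) in the defining sum
\[
\varepsilon_{\la^\natural}=\sum_{k=1}^m\varepsilon_{\la_{-k}+k}-\sum_{j=1}^n\varepsilon_{j-\la'_j}
\]
are strictly increasing in $k$, since $\la_{-1}\le\cdots\le\la_{-m}$; and the exponents $j-\la'_j$ are strictly increasing in $j$ since $\la'_j$ is weakly decreasing in $j$. Hence the sets
\[
A_\la:=\{\la_{-k}+k:1\le k\le m\},\qquad B_\la:=\{j-\la'_j:1\le j\le n\}
\]
have $m$ and $n$ distinct elements, respectively, and the positions where $\varepsilon_{\la^\natural}$ has coefficient $+1$ or $-1$ are precisely $P^+=A_\la\setminus B_\la$ and $P^-=B_\la\setminus A_\la$ --- directly readable from $\varepsilon_{\la^\natural}$.

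Next I would apply \lemref{auxlemma} to localize the cancellations $A_\la\cap B_\la$. Setting $L:=\la_{-m}$, the auxlemma shows that every cancellation $a\in A_\la\cap B_\la$ arises from indices $(j,i)$ with $\la'_i=0$ and $\la_{-j}=i-j$, so $i>L$ and $a=i\in\{L+1,\ldots,n\}$. Conversely, any $a\in A_\la$ with $L<a\le n$ lies in $B_\la$ via the index $j=a$ (noting $\la'_a=0$ since $a>L$). Hence $A_\la\cap B_\la=A_\la\cap\{L+1,\ldots,n\}$, and so
\[
B_\la=\bigl(P^-\cap(-\infty,L-1]\bigr)\sqcup\{L+1,\ldots,n\},
\]
with a similar formula determining $A_\la$ in terms of $P^+$, $P^-$, $L$, and $n$.

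The final step is to recover $L$ intrinsically from $\varepsilon_{\la^\natural}$. I would characterize $L$ as the unique non-negative integer $L'$ such that: (i) $|P^-\cap(-\infty,L'-1]|=L'$ (so that the candidate $B'$ has the correct cardinality $n$); (ii) $P^+\cap\{L'+1,\ldots,n\}=\emptyset$; and (iii) the sequence $\la'_j:=j-b_j$ (where $b_1<\cdots<b_n$ are the elements of the resulting $B'$) is a non-negative weakly decreasing sequence whose associated partition $\la$ lies in $\mathcal{P}^+_{m|n}$ --- in particular satisfies the hook condition $\la_1\le n$. Once $L$ is identified, $B_\la$ is recovered by the displayed formula, $\la'_j=j-b_j$ determines the conjugate partition, and $\la$ follows by conjugation, which establishes the injectivity.

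I expect the main technical obstacle to be the uniqueness of $L$ in step (iii). The essential input is the hook condition $\la_1\le n$: for any $L'\ne L$, either the cardinality condition (i) fails, the disjointness (ii) fails, or the resulting conjugate violates the partition monotonicity or the hook condition. Confirming this rigorously requires a careful case analysis, particularly at edge cases such as $L=0$, $L\ge n$, or when $\la^-$ contains zero parts.
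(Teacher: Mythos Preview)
Your structural analysis is correct and coincides with the paper's: both use \lemref{auxlemma} to show that every cancellation in $\varepsilon_{\la^\natural}$ occurs at an index in $\{L+1,\ldots,n\}$ with $L=\la_{-m}$, and your formula $B_\la=(P^-\cap(-\infty,L-1])\sqcup\{L+1,\ldots,n\}$ is valid. The divergence is in the reconstruction step, and yours is left incomplete.

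The paper never isolates $L$. Instead it observes that the number $k$ of cancellations is read off immediately from $|P^+|+|P^-|=m+n-2k$, and then fills in exactly $k$ missing indices by an iterative rule: let $c_0\le n$ be the smallest positive integer with $\{c_0+1,\ldots,n\}\subseteq P^-$, adjoin $c_0$ to the support, and repeat. After $k$ steps the support equals $B_\la$ and $\la$ is recovered. The point is that, reading downward from $n$, the gaps in $P^-$ are precisely the cancelled indices (in decreasing order) followed by $L$; knowing $k$ in advance tells the algorithm when to stop, so $L$ never needs to be identified separately.

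Your route requires pinning down $L$ via conditions (i)--(iii), and you rightly flag this as the obstacle. As stated, (i) is only correct for $L'\le n$ (for $L'>n$ the left side equals $n$, not $L'$), and (i) alone does not determine $L$: the function $L'\mapsto |P^-\cap(-\infty,L'-1]|-L'$ is weakly decreasing, so its zero set is an interval. What does work is that $L$ (more precisely $\min(L,n)$) is the \emph{largest} $L'$ satisfying (i), since $L\notin P^-$ forces the function to drop at $L+1$; equivalently, it is the unique $L'\ge 0$ with $L'\notin P^-$ and $|P^-\cap(-\infty,L'-1]|=L'$. With this sharpening your argument goes through, but the paper's iterative procedure reaches the same endpoint more directly and without the case analysis you anticipate.
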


\begin{proof}
We will show that we can construct the partition $\la$ from
$\varepsilon_{\la^\natural}$.

In light of the \lemref{auxlemma} we can construct $\la$ as follows.
Consider
\begin{equation}\label{epsilonlambda}
\varepsilon_{\la^\natural}=\varepsilon_{a_1}+\cdots+\varepsilon_{a_s}-\varepsilon_{b_1}-\cdots-\varepsilon_{b_t},
\end{equation}
with $a_1>a_2>\cdots>a_s$, and $b_1<b_2<\cdots<b_t$.  The total
number of terms is $m+n-2k$, where $k$ is the degree of atypicality
of $\la$. Let
\begin{equation}\label{epsilonminus}
\varepsilon^0_{-}=-\varepsilon_{b_1}-\cdots-\varepsilon_{b_t}.
\end{equation}
Assume that  $k>0$. Otherwise it is clear what $\la$ is. Let $c_0\le
n$ be the smallest positive integer such that
$-\varepsilon_{c_0+1}-\varepsilon_{c_0+2}-\cdots-\varepsilon_n$ is a
summand of \eqnref{epsilonminus} above.  Then we define
\begin{equation}\label{epsilonminus1}
\varepsilon^1_{-}=-\varepsilon_{b_1}-\cdots-\varepsilon_{b_t}-\varepsilon_{c_0}.
\end{equation}
Now we let $c_1< c_0$ be the smallest integer such that
$-\varepsilon_{c_1+1}-\cdots-\varepsilon_n$ is a summand of
\eqnref{epsilonminus1}.  Define $\varepsilon^2_-$ analogously et
cetera until we get
\begin{equation*}
\varepsilon_-^k=\varepsilon^0_--\sum_{i=0}^{k-1}\varepsilon_{c_i}.
\end{equation*}
Consider
\begin{equation*}
\varepsilon_{\la^\natural}=\big(\varepsilon_{a_1}+\cdots+\varepsilon_{a_s}+\sum_{i=0}^{k-1}\varepsilon_{c_i}\big)
-\big(\varepsilon_{b_1}+\cdots+\varepsilon_{b_t}+\sum_{i=0}^{k-1}\varepsilon_{c_i}\big).
\end{equation*}
From this form with $m+n$ summands we can read off $\la$ easily.
\end{proof}

\begin{cor}\label{ext:finite} Let $n\in\N$. The irreducible tensor representations have distinct central
characters.  In particular for $\la,\mu\in\mc P^+_{m|n}$ we have
$${\rm
Ext}^1\left(L(\gl(m|n),\la^\natural),L(\gl(m|n),\mu^{\natural})\right)=0.$$
\end{cor}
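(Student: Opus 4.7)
The plan is to deduce both assertions directly from the injectivity of $\la\mapsto\varepsilon_{\la^\natural}$ established in the preceding lemma, together with the standard fact that extensions between simple modules with distinct central characters split.

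First, I would recall from the discussion just before the preceding lemma (Brundan's criterion, cited as \cite[Lemma 4.18]{B}) that for weights $\gamma,\mu\in X^{m|n}$ one has $\chi_\gamma=\chi_\mu$ if and only if $\varepsilon_\gamma=\varepsilon_\mu$. Combining this with the injection $\la\mapsto\varepsilon_{\la^\natural}$ from $\mc P^+_{m|n}$ into $P$ proved in the previous lemma, it follows immediately that for distinct $\la,\mu\in\mc P^+_{m|n}$ the central characters $\chi_{\la^\natural}$ and $\chi_{\mu^\natural}$ are distinct. This proves the first assertion.

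For the Ext-vanishing statement, the plan is the usual generalized eigenspace decomposition argument. If $\la=\mu$ there is nothing to do (this case is not really what is at issue; the content is for $\la\neq\mu$, where the two central characters disagree). For $\la\neq\mu$, suppose one is given an extension
\begin{equation*}
0\longrightarrow L(\gl(m|n),\mu^\natural)\longrightarrow E\longrightarrow L(\gl(m|n),\la^\natural)\longrightarrow 0.
\end{equation*}
Pick a central element $z\in Z(U(\gl(m|n)))$ on which $\chi_{\la^\natural}$ and $\chi_{\mu^\natural}$ disagree, and decompose $E$ into generalized eigenspaces for the action of $z$. Since $z$ acts as the scalar $\chi_{\la^\natural}(z)$ on the quotient and as $\chi_{\mu^\natural}(z)$ on the submodule, these two generalized eigenspaces yield a direct-sum decomposition of $E$ as a $\gl(m|n)$-module that splits the extension. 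Hence ${\rm Ext}^1=0$.

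The whole argument is essentially formal once one has the injectivity lemma and Brundan's criterion in hand, so there is no real obstacle; the only point that requires a moment's care is the appeal to a separating central element, which exists precisely because $\chi_{\la^\natural}\neq\chi_{\mu^\natural}$ as functionals on $Z(U(\gl(m|n)))$.
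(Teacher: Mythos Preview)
Your argument is correct and matches the paper's (implicit) reasoning: the corollary is left unproved in the paper precisely because it is immediate from the preceding injectivity lemma together with Brundan's criterion $\chi_\gamma=\chi_\mu\Leftrightarrow\varepsilon_\gamma=\varepsilon_\mu$, and the Ext-vanishing for $\la\neq\mu$ is then the standard block-decomposition consequence you describe. One small caveat: your phrase ``there is nothing to do'' when $\la=\mu$ is a bit quick, since the central character argument does not apply there and the corollary does assert Ext-vanishing for \emph{all} pairs; however, this case follows from the elementary fact that a simple highest weight module has no self-extensions (any such extension would be generated by a highest weight vector yet have two-dimensional top weight space, a contradiction), so the gap is only expository.
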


Let $\mathcal O_{m|n}^{++}$ denote the full subcategory of $\mathcal
O_{m|n}^{+}$ such that each composition factor isomorphic to an
irreducible tensor module.

\begin{rem} Let $n\in\N$ and let
$\mc{O}^{++,f}_{m|n}$ denote the full subcategory of
$\mc{O}^{++}_{m|n}$ consisting of objects that have finite
composition series. Then $\mc{O}^{++,f}_{m|n}$ is a full subcategory
of the category of finite-dimensional $\gl(m|n)$-modules. From
\corref{ext:finite} and the long exact sequence one shows, using
induction on the length of the composition series, that ${\rm
Ext}^1(M,N)=0$, for $M,N\in \mc{O}^{++,f}_{m|n}$. The projective
cover $P(\gl(m|n),\la^\natural)$ of $L(\gl(m|n),\la^\natural)$
exists in the category of finite-dimensional $\gl(m|n)$-modules (see
e.g.~\cite{B}). Now consider the short exact sequence
\begin{equation*}
0\longrightarrow K\longrightarrow
P(\gl(m|n),\la^\natural)\longrightarrow
L(\gl(m|n),\la^\natural)\longrightarrow 0.
\end{equation*}
From the long exact sequence one shows that ${\rm Ext}^i(M,N)=0$,
and in particular, ${\rm
Ext}^i\left(L(\gl(m|n),\la^\natural),L(\gl(m|n),\mu^{\natural})\right)=0$,
for all $i\in\N$.
\end{rem}

\begin{thm}\label{cat:ss}
For $n\in\N\cup\{\infty\}$ the category $\mathcal O_{m|n}^{++}$ is a
semisimple tensor category.
\end{thm}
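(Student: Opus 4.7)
The theorem has two parts: (i) every object of $\mc{O}^{++}_{m|n}$ decomposes as a direct sum of irreducible tensor modules, and (ii) the category is closed under tensor products. Both rest on the complete reducibility of the tensor powers $(\C^{m|n})^{\otimes k}$: for finite $n$ this is the classical Berele--Regev/Sergeev theorem, and for $n=\infty$ it is \lemref{finite:to:infinite}. Consequently every irreducible tensor module sits as a direct summand inside some tensor power of $\C^{m|n}$, and in particular every one of its weights has non-negative coefficient on each $\delta_i$. When $n$ is finite, part (i) is immediate from \corref{ext:finite} and the Remark preceding this theorem. The nontrivial new input is the case $n=\infty$, for which I combine the truncation functor $\mf{tr}_n$ with a weight-positivity argument in the spirit of the proof of \lemref{finite:to:infinite}.

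For $\mathrm{Ext}^1$-vanishing in $\mc{O}^{++}_{m|\infty}$, consider an extension
\[
0\longrightarrow L(\gl(m|\infty),\mu^\natural)\longrightarrow E\longrightarrow L(\gl(m|\infty),\lambda^\natural)\longrightarrow 0
\]
and pick $n$ large enough that $\la'_j=\mu'_j=0$ for $j>n$. By exactness of $\mf{tr}_n$ and \eqnref{truncation}, applying $\mf{tr}_n$ yields an extension of $\gl(m|n)$-modules, which splits by \corref{ext:finite}. Choose a highest-weight vector $v_\la\in\mf{tr}_n E$ of weight $\la^\natural$ in the complement of $\mf{tr}_n L(\gl(m|\infty),\mu^\natural)$; then $E_{i,i+1}v_\la=0$ for $i\le n-1$. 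The key step is to upgrade $v_\la$ to a $\gl(m|\infty)$-highest-weight vector, i.e.~to show $E_{i,i+1}v_\la=0$ for $i\ge n$ as well. Every weight of $E$ is a weight of one of its two composition factors, and each of those factors is a summand of a tensor power of $\C^{m|\infty}$, so all weights of $E$ have non-negative coefficient on every $\delta_i$. For $i\ge n$, the hypothetical weight $\la^\natural+\delta_i-\delta_{i+1}$ of $E_{i,i+1}v_\la$ has coefficient $-1$ on $\delta_{i+1}$ (since $\la'_{i+1}=0$), so it cannot occur in $E$. Hence $E_{i,i+1}v_\la=0$ for all such $i$, and $v_\la$ generates a submodule isomorphic to $L(\gl(m|\infty),\la^\natural)$ that splits the sequence.

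To promote this to full semisimplicity, grade any $M\in\mc{O}^{++}_{m|n}$ by total weight: writing $M=\bigoplus_{k\ge 0}M_k$ where $M_k$ collects weight vectors whose weight has total coefficient sum $k$, each $M_k$ is a $\gl$-submodule whose composition factors lie among the finitely many $L(\nu^\natural)$ with $|\nu|=k$, each of finite multiplicity. Thus $M_k$ has finite length, and the Ext-vanishing together with induction on composition length (exactly as in the Remark preceding the theorem) makes $M_k$ semisimple; hence so is $M$. For (ii), since $L(\gl(m|n),\la^\natural)$ is a direct summand of $(\C^{m|n})^{\otimes|\la|}$, the tensor product $L(\gl(m|n),\la^\natural)\otimes L(\gl(m|n),\mu^\natural)$ is a summand of $(\C^{m|n})^{\otimes(|\la|+|\mu|)}$, which is a finite direct sum of irreducible tensor modules. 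For general semisimple $M,N\in\mc{O}^{++}_{m|n}$, bilinearity gives the decomposition; the finite-multiplicity condition is preserved because each fixed degree $|\nu|=k$ receives contributions only from pairs of constituents of complementary degrees $(i,j)$ with $i+j=k$, of which there are only finitely many and each with finite multiplicity. The main obstacle is the lifting step for $n=\infty$; once the weight-positivity of tensor modules is invoked, the rest is routine homological bookkeeping.
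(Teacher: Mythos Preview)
Your argument is correct and follows essentially the same route as the paper: reduce the $n=\infty$ case to finite $n$ via the truncation functor and \corref{ext:finite}, then lift the splitting back using the observation that $E_{i,i+1}v_\la$ would have a weight not occurring in $E$. Your weight-positivity justification for that last step (coefficients on $\delta_{i+1}$ are non-negative because tensor modules sit in tensor powers) is a clean way to phrase what the paper leaves as ``repeating the arguments of \lemref{finite:to:infinite}'', and your explicit grading $M=\bigoplus_k M_k$ by total weight to reduce to finite length is a detail the paper suppresses; otherwise the proofs coincide.
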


\begin{proof}
Given $M,N\in\mc{O}^{++}_{m|n}$ with composition series $0\subseteq
M_1\subseteq M_2\subseteq\ldots$, and $0\subseteq N_1\subseteq
N_2\subseteq\ldots$, respectively. We obtain a filtration of
$M\otimes N$
\begin{align*}
0\subseteq M_1\otimes N_1\subseteq M_2\otimes N_2\subseteq\ldots,
\end{align*}
whose composition series is a composition series for $M\otimes N$.
Now for $\la,\mu,\nu\in \mc{P}^+_{m|n}$, $L(\la^\natural)\subseteq
L(\mu^\natural)\otimes L(\nu^\natural)$ only if $|\la|=|\mu|+|\nu|$.
Thus every isotypic composition factor of $M\otimes N$ appears with
finite multiplicity.  Hence $\mc{O}^{++}_{m|n}$ is a tensor
category.

The theorem for a finite $n$ now follows from \corref{ext:finite}.
To show the statement for $n=\infty$ we need to show that any exact
sequence of $\gl(m|\infty)$-modules of the form
\begin{equation*}
0\longrightarrow L(\gl(m|\infty),\la^\natural)\longrightarrow
E\longrightarrow L(\gl(m|\infty),\mu^\natural)\longrightarrow 0
\end{equation*}
is split.  For this choose a finite $n$ sufficiently large so that
$\la_{1},\mu_{1}\le n$.  Applying the truncation functor $\mf{tr}_n$
we get an exact sequence of $\gl(m|n)$-modules
\begin{equation*}
0\longrightarrow L(\gl(m|n),\la^\natural)\longrightarrow \mf{tr}_n
E\longrightarrow L(\gl(m|n),\mu^\natural)\longrightarrow 0,
\end{equation*}
which is split. Now repeating the arguments of
\lemref{finite:to:infinite} completes the proof.
\end{proof}

\subsubsection{Non-standard Borel subalgebra of
$\gl(m|m)$}\label{nonstandardborel}

 Below we will discuss the highest weight theory of the
Lie superalgebra $\gl(m|m)$, for $m\in\N\cup\{\infty\}$, with
respect to the non-standard Borel subalgebra.

Let $\C^{m|m}=\C^{>0}$ be the superspace of dimension $(m|m)$. We
assume that it has a basis $\{\,e_r\,|\,
r=\hf,1,\frac{3}{2},2,\cdots\}$ with ${\rm deg}e_r=\bar{0}$ for
$r\in\{1,2,\cdots\}$, and ${\rm deg}e_s=\bar{1}$  otherwise. With
respect to this basis the subalgebra generated by $E_{rs}$ ($r\le
s$) will be called the {\em non-standard Borel subalgebra} of
$\gl(m|m)$.

For a finite $m$ it is clear that $(\C^{m|m})^{\otimes k}$ is a
completely reducible $\gl(m|m)$-module, and the irreducible factors
are the irreducible tensor representations, which are parameterized
by $\la=(\la_1,\la_2,\ldots)\in\mc{P}^+$ with $\la_{m+1}\le m$. In
fact, for such a $\la$, the highest weight (with respect to the
non-standard Borel subalgebra and the Cartan subalgebra $\sum_{r}\C
E_{rr}$) of the corresponding representation of $\gl(m|m)$ is given
by $\sum_{r}a_r\delta_r$ with (cf.~e.g.~\cite[Section 7]{CW2})
\begin{align*}
& a_r=
\begin{cases}
\langle \la'_{r+\hf} -(r-\hf)\rangle, & \text{if $r\in\{\hf,\frac{3}{2},\cdots\}$}, \\
\langle \la_{r} -r\rangle, & \text{if $r\in\{1,2,\cdots\}$}.
\end{cases}
\end{align*}
Above $\langle k\rangle=k$, for $k>0$, and $\langle k\rangle=0$,
otherwise. By \eqnref{hookschur} its character, which is the trace
of the operator $\prod_{r}x_r^{E_{rr}}$, is
$HS_{\la'}(x_\hf,x_1,x_{\frac{3}{2}},\cdots)$, since the standard
and non-standard Borel subalgebras share the same Cartan subalgebra.

For $m=\infty$ it is not hard to show, with the help of a similarly
defined truncation functor, that an analogous statement of
\lemref{finite:to:infinite} remains valid in the setup of
non-standard Borel.

We define respective categories $\mc{O}^{++}_{m|m}$
($m\in\N\cup\{\infty\}$) which is a semisimple tensor category for
$m\in\N$ by \thmref{cat:ss}. Using this fact we then can show in an
analogous fashion the following.

\begin{thm}\label{catnon:ss} The category
$\mathcal O_{\infty|\infty}^{++}$ is a semisimple tensor category.
\end{thm}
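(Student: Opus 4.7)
The plan is to mimic verbatim the strategy used in the proof of \thmref{cat:ss} for $n=\infty$, replacing the standard truncation functor by its non-standard analog. The key observation that makes this reduction possible is that for finite $m$ the category $\mc{O}^{++}_{m|m}$ (defined with respect to the non-standard Borel) is, as a full subcategory of $\gl(m|m)$-modules, the same as the category of tensor representations constructed with the standard Borel: the modules are the same finite-dimensional modules, only their highest weights are labeled differently. Consequently the semisimplicity statement for finite $m$ is immediate from \thmref{cat:ss}, and I may treat it as a black box.

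Given this, I would first verify the tensor closure property for $\mc{O}^{++}_{\infty|\infty}$. For $M,N\in\mc{O}^{++}_{\infty|\infty}$ with composition series $\{M_i\}$ and $\{N_i\}$, the family $M_i\otimes N_i$ provides a filtration of $M\otimes N$ whose successive quotients have composition factors appearing as constituents of tensor products of irreducible tensor modules. As in the proof of \thmref{cat:ss}, any $L(\la^\natural)$ occurring inside $L(\mu^\natural)\otimes L(\nu^\natural)$ must satisfy $|\la|=|\mu|+|\nu|$, so each isotypic constituent appears with finite multiplicity and $M\otimes N$ lies in the category.

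For semisimplicity, I would start with an arbitrary extension
\begin{equation*}
0\longrightarrow L(\gl(\infty|\infty),\la^\natural)\longrightarrow E\longrightarrow L(\gl(\infty|\infty),\mu^\natural)\longrightarrow 0
\end{equation*}
with $\la,\mu\in\mc{P}^+$. Choose $m$ sufficiently large so that all the rows and columns of $\la$ and $\mu$ relevant to the non-standard highest weights are supported on indices $r\le m$, and apply the non-standard truncation functor $\mf{tr}_m\colon\mc{O}^{++}_{\infty|\infty}\to\mc{O}^{++}_{m|m}$ promised earlier in \secref{nonstandardborel}. Exactness of $\mf{tr}_m$ together with the non-standard analog of \eqref{truncation} yields an extension of $\gl(m|m)$-modules
\begin{equation*}
0\longrightarrow L(\gl(m|m),\la^\natural)\longrightarrow \mf{tr}_m E\longrightarrow L(\gl(m|m),\mu^\natural)\longrightarrow 0,
\end{equation*}
which splits by semisimplicity of $\mc{O}^{++}_{m|m}$. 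This produces a weight vector $v_\mu\in E$, of weight the non-standard highest weight of $\mu^\natural$, annihilated by all $E_{r,r+\hf}$ for $r<m$.

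The main obstacle, and the step most in need of care, is to lift this finite-level splitting back to $\gl(\infty|\infty)$. Following the pattern of \lemref{finite:to:infinite}, I would argue that for $r\ge m$ the vector $E_{r,r+\hf}v_\mu$ lies in a weight space of $E$ that does not occur in either composition factor, hence must vanish. This is where the non-standard parity pattern on the indices $\hf\N$ must be tracked carefully: one must check that the shifted weight $\mu^\natural+\delta_r-\delta_{r+\hf}$ truly appears in neither $L(\gl(\infty|\infty),\la^\natural)$ nor $L(\gl(\infty|\infty),\mu^\natural)$ for $r\ge m$. Granting this, $v_\mu$ is a genuine $\overline{\h}$-highest weight vector with respect to the non-standard Borel of $\gl(\infty|\infty)$, so the submodule it generates is a copy of $L(\gl(\infty|\infty),\mu^\natural)$ inside $E$, providing the desired splitting and completing the proof.
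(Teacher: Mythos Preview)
Your proposal is correct and follows essentially the same approach the paper indicates: the paper explicitly says that $\mc{O}^{++}_{m|m}$ is semisimple for finite $m$ by \thmref{cat:ss}, and that using this fact one proves \thmref{catnon:ss} ``in an analogous fashion,'' i.e.~via the non-standard truncation functor and the weight argument of \lemref{finite:to:infinite}, exactly as you outline. The only cosmetic point is that in the non-standard setting the highest weights are labeled by the formula in \secref{nonstandardborel} rather than by $\la^\natural$ of \eqnref{lambda:transpose}, but you clearly have the right picture.
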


To simplify notation for later on we set, for $m=\infty$ above,
$\ov{\gl}_{>0}:=\gl(\infty|\infty)$, which may be regarded as a
subalgebra of $\hgl_{\infty|\infty}$ spanned by $E_{rs}$ with $r,s
>0$. Let $\overline{\gl}_{\le 0}$ be the subalgebra of
$\hgl_{\dinfty}$ spanned by $E_{rs}$ with $r,s\leq 0$, which is
isomorphic to $\overline{\gl}_{> 0}$.

Let $\C^{\le 0}=\sum_{r\in -\hf\Z_+}\C e_r$. Similarly, we call an
irreducible component of $(\C^{\le 0 *})^{\otimes k}$ an {\em
irreducible dual tensor representation} of $\overline{\gl}_{\le 0}$.
As in $(\C^{>0})^{\otimes k}$, each irreducible component
corresponds to $\la\in\mc{P}^+$, whose highest weight with respect
to the non-standard Borel subalgebra $\sum_{r\le s}\C E_{rs}$ and
the Cartan subalgebra $\sum_{r \in-\hf\Z_+}\C E_{rr}$ is given by
\begin{equation*}
-\sum_{r \in-\hf\Z_+}b_r\delta_r,\quad\text{where }b_r=
\begin{cases}
\langle \la_{-r+1}+r\rangle, & \text{if $r\in -\Z_+$}, \\
\langle \la'_{-r+\hf} +(r-\hf)\rangle, & \text{if $r\in -\hf-\Z_+$}.
\end{cases}
\end{equation*}
Its character is
$HS_{\la}(x_0^{-1},x_{-\hf}^{-1},x_{-1}^{-1},\ldots)$ and we have an
analogous statement on semisimplicity of an analogous category of
dual tensor representations.

\subsection{Super dual pairs on infinite-dimensional Fock spaces}\label{super:dualpairs}

Let $\SG$ be one of the Lie superalgebras defined in
\secref{ex:superalgebras}. Let $\La\in\bar{\h}^*$ be given. By
standard arguments there exists  a unique irreducible highest weight
$\SG$-module of highest weight $\La$, which will be denoted by
${L}(\SG,\La)$. For $\La\in\bar{\h}^*$, we set
$\La_s=\langle\La,E_{ss}\rangle$ ($s\in\hf\Z$) if
$\SG=\hgl_{\dinfty}$, and
$\La_s=\langle\La,\widetilde{E}_{s}\rangle$ ($s\in\hf\N$) if
$\SG=\widehat{\mc B},\widehat{\mc C},\widehat{\mc D}$.

Fix a positive integer $\ell\geq 1$. Consider $\ell$ pairs of free
fermions $\psi^{\pm,i}(z)$ and $\ell$ pairs of free bosons
$\gamma^{\pm,i}(z)$ with $i=1,\cdots,\ell$. That is, we have
\begin{align*}
\psi^{+,i}(z)&=\sum_{n\in\Z}\psi^{+,i}_nz^{-n-1},\quad\quad\
\psi^{-,i}(z)=\sum_{n\in\Z}\psi^{-,i}_nz^{-n}, \\
\gamma^{+,i}(z)&=\sum_{r\in\frac{1}{2}+\Z}\gamma^{+,i}_rz^{-r-1/2},\quad
\gamma^{-,i}(z)=\sum_{r\in\frac{1}{2}+\Z}\gamma^{-,i}_rz^{-r-1/2},
\end{align*}
with non-trivial commutation relations
$$[\psi^{+,i}_m,\psi^{-,j}_n]=\delta_{ij}\delta_{m+n,0},\quad
[\gamma^{+,i}_r,\gamma^{-,j}_s]=\delta_{ij}\delta_{r+s,0}.$$ Let
$\SF^\ell$ denote the corresponding Fock space generated by the
vaccum vector $|0\rangle$, which is annihilated by
$\psi^{+,i}_n,\psi^{-,i}_m,\gamma^{\pm,i}_r$ for $n\ge 0$, $m>0$ and
$r>0$.

We also consider the following $\ell$ pairs of free fermions
$\widetilde{\psi}^{\pm,i}$ ($i=1,\cdots,\ell$):
\begin{align*}
\widetilde{\psi}^{+,i}(z)&=\sum_{n\in\Z\setminus\{0\}}\psi^{+,i}_nz^{-n-1},\quad\quad\
\widetilde{\psi}^{-,i}(z)=\sum_{n\in\Z\setminus\{0\}}\psi^{-,i}_nz^{-n},
\end{align*}
with the same commutation relations. We denote by $\SF_0^\ell$ the
Fock space corresponding to $\widetilde{\psi}^{\pm,i}(z)$ and
$\gamma^{\pm,i}(z)$ ($i=1,\cdots,\ell$) generated by the vacuum
vector $|0\rangle$ with
$\psi^{\pm,i}_n|0\rangle=\gamma^{\pm,i}_r|0\rangle=0$, for $n,r>0$.

We introduce a neutral fermionic field
$\widetilde{\phi}(z)=\sum_{n\in\Z\setminus\{0\}}\phi_nz^{-n-1}$ and
a neutral bosonic field $\chi(z)=\sum_{r\in\hf+\Z}\chi_rz^{-r-\hf}$
with non-trivial commutation relations
$[\phi_m,\phi_n]=\delta_{m+n,0}$ and
$[\chi_r,\chi_s]=\delta_{r+s,0}$. Denote by $\SF_0^{\hf}$ the
corresponding Fock space generated by the vacuum vector $|0\rangle$,
which is annihilated by $\phi_m$ and $\chi_r$ for $m,r>0$. We denote
by $\SF_0^{\ell+\hf}$ the tensor product of $\SF_0^{\ell}$ and
$\SF_0^{\hf}$.

Let $W$ be the Weyl group of the Lie algebra $\mf{g}$, and $G$ the
dual partner of $\G$ in \secref{classical:dualpairs}. We let
$W_k^0$, $\lambda_w^{\pm}$, $\lambda_w$ ($\la\in \mc{P}(G)$, $w\in
W_k^0$) be as in \secref{classical:homology}. Let $x_r$
($r\in\hf\Z$) and $z_i$ ($i=1,\cdots,\ell$) be formal
indeterminates.

\subsubsection{The $(\hgl_\dinfty,{\rm GL}(\ell))$-duality}
Given $\la\in {\mc P}({\rm GL}(\ell))$, we define $\SLa^{\mf
a}(\la)\in\bar{\h}^*$ by {\allowdisplaybreaks
\begin{align*}
 &\SLa^{\mf a}(\la)_i= \langle\la_i'-i \rangle,\quad i\in\N,\\
 &\SLa^{\mf a}(\la)_j= -\langle-\la'_j+j \rangle, \quad j\in -\Z_{+},\\
 &\SLa^{\mf a}(\la)_r=\langle\la_{r+1/2}-(r-1/2) \rangle,\quad r\in\hf+\Z_+,\\
 &\SLa^{\mf a}(\la)_{s}=-\langle -\la_{\ell+(s+1/2)}+({s-1/2}) \rangle,\quad
 s\in-\hf-\Z_+,\\
&\langle \SLa^{\mf a}(\la),K\rangle=\ell.
\end{align*}}

\begin{prop} \cite[Theorem 8.1]{CW2}\label{sduality-a} There exists an action of $\hgl_\dinfty\times{\rm
GL}(\ell)$ on $\SF^\ell$. Furthermore, under this joint action, we
have
\begin{equation}\label{aux:sdecomp-a}
\SF^\ell\cong\bigoplus_{\la\in  {\mc P}({\rm GL}(\ell))}
L(\hgl_{\infty|\infty},\SLa^{\mf a}(\la))\otimes V_{{\rm
GL}(\ell)}^{\la}.
\end{equation}
\end{prop}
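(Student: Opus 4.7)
The plan is to construct the commuting joint action from the free fields, identify joint highest weight vectors indexed by $\mc{P}(\mathrm{GL}(\ell))$, and then combine complete reducibility of the $\mathrm{GL}(\ell)$-action with a character comparison.

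First I would realize $\hgl_\dinfty$ on $\SF^\ell$ via normal-ordered quadratic expressions in the modes $\psi^{\pm,i}_n$ and $\gamma^{\pm,i}_r$, identifying the integer-indexed basis of $\C^{\dinfty}$ with the fermionic sector and the half-integer-indexed basis with the bosonic sector. For each elementary matrix $E_{rs}$ with $r,s\in\hf\Z$ one assigns the natural bilinear in the corresponding modes, summed diagonally over the $\ell$ copies; the usual normal-ordering computation shows that the brackets close up to the $2$-cocycle $\beta$ of \eqnref{cocycle-super}. In parallel $\mathrm{GL}(\ell)$ acts by rotating the $\ell$ copies, with Lie algebra generators $e_{ij}$ realized as bilinears pairing the $i$-th with the $j$-th copy. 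A standard Wick argument shows the two actions commute.

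Second, for each $\la\in\mc{P}(\mathrm{GL}(\ell))$ I would exhibit an explicit joint highest weight vector $v_\la\in\SF^\ell$, built as a monomial in fermionic creation modes (encoding the positive parts of $\la$) and bosonic creation modes (encoding the non-positive parts of $\la$), arranged across the $\ell$ copies in the staircase pattern dictated by the Young diagram of $\la$. Direct calculation then shows that $v_\la$ has $\mathrm{GL}(\ell)$-weight $\la$ and $\hgl_\dinfty$-weight exactly $\SLa^{\mf a}(\la)$, with $K$ acting as $\ell$.

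Third, since the joint $\overline{\mf h}$-weight spaces of $\SF^\ell$ are finite-dimensional and $\gl(\ell)$ preserves them, the $\gl(\ell)$-action is locally finite and hence completely reducible, so step two gives an embedding
\[
\bigoplus_{\la\in\mc{P}(\mathrm{GL}(\ell))} L\!\left(\hgl_\dinfty,\SLa^{\mf a}(\la)\right)\otimes V^\la_{\mathrm{GL}(\ell)}\hookrightarrow \SF^\ell.
\]
To upgrade this to an equality I would compute the character of $\SF^\ell$ under $\prod_{r\in\hf\Z}x_r^{E_{rr}}\prod_{i=1}^\ell z_i^{e_{ii}}$. It factors as a product of the $\ell$ fermionic and $\ell$ bosonic Fock characters, and a super-Cauchy identity rewrites this product as $\sum_\la\mathcal{H}(\la)\,\mathrm{ch}V^\la_{\mathrm{GL}(\ell)}$, where $\mathcal{H}(\la)$ is a product of hook Schur functions labeled by $\la$ and $\la'$ in separate sets of variables. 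By \lemref{aux:hookSchur} and the analysis of Section 3.2.3, $\mathcal{H}(\la)$ coincides with the character of the $\ov{\gl}_{>0}\oplus\ov{\gl}_{\le 0}$-top of $L(\hgl_\dinfty,\SLa^{\mf a}(\la))$; multiplying by the contribution of the negative part $(\hgl_\dinfty)_{-}$ recovers $\mathrm{ch}\,L(\hgl_\dinfty,\SLa^{\mf a}(\la))$, and the decomposition follows.

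The main obstacle is the character matching of step three: the infinite-dimensional bosonic weight spaces force us to work purely with formal characters, and identifying the $\ov{\gl}_{>0}\oplus\ov{\gl}_{\le 0}$-top of $L(\hgl_\dinfty,\SLa^{\mf a}(\la))$ with a product of a tensor and a dual tensor hook Schur function character is exactly what the non-standard Borel framework of Section 3.2.3, together with the semisimplicity statement \thmref{catnon:ss}, is designed to deliver.
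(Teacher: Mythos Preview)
The paper does not prove this proposition; it is quoted from \cite[Theorem~8.1]{CW2}. Your first two steps---constructing the commuting actions via normal-ordered bilinears and exhibiting explicit joint highest weight vectors $v_\la$ with the correct weights---are indeed the approach of \cite{CW2} and are fine in outline.

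Step three, however, has a genuine gap. The assertion that ``multiplying by the contribution of the negative part $(\hgl_\dinfty)_{-}$ recovers $\mathrm{ch}\,L(\hgl_\dinfty,\SLa^{\mf a}(\la))$'' is false: that product is the character of the generalized Verma module induced from $L(\ov{\mf l},\SLa^{\mf a}(\la))$, not of the irreducible. The only irreducible character formula available is \thmref{char:super-A}, and that is \emph{derived from} \propref{sduality-a} (via \eqnref{character1}), so invoking it here would be circular. Thus your character comparison cannot upgrade the embedding of step two to an isomorphism; at best it tells you $\mathrm{ch}\,M_\la$, where $M_\la$ is the multiplicity space of $V^\la_{\mathrm{GL}(\ell)}$ in $\SF^\ell$, but gives no independent handle on $\mathrm{ch}\,L(\hgl_\dinfty,\SLa^{\mf a}(\la))$ to compare it with.

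The argument in \cite{CW2} closes this gap differently: one shows that $\SF^\ell$ is completely reducible over $\hgl_\dinfty$ (the Fock space carries a natural positive-definite contravariant Hermitian form compatible with the action, so the module is unitarizable), and one verifies that the $v_\la$ exhaust the space of joint highest weight vectors. Complete reducibility then forces each multiplicity space $M_\la$, being a highest weight submodule of a semisimple module, to be irreducible, and the decomposition follows without any appeal to irreducible characters.
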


 Computing the trace of the operator
$\prod_{s\in\hf\Z}x_s^{E_{ss}}\prod_{i=1}^\ell z_i^{e_{ii}}$ on both
sides of \eqnref{aux:sdecomp-a}, we obtain the following identity:

\begin{equation}\label{character1}
\prod_{i=1}^\ell\frac{\prod_{n\in\N}(1+x_n z_i)
(1+x_{-n+1}^{-1}z_i^{-1})}{\prod_{r\in1/2+\Z_+}
(1-x_rz_i)(1-x_{-r}^{-1}z_i^{-1})}=\sum_{\la\in\mathcal{P}({\rm
GL}(\ell))}{\rm ch}L(\hgl_{\infty\vert\infty},\SLa^{\mf a}(\la)){\rm
ch}V^\la_{{\rm GL}(\ell)}.
\end{equation}
Set ${\bf x}_{>0}=\{\,x_\hf,x_1,x_{\frac{3}{2}},\cdots\,\}$, ${\bf
x}^{-1}_{\le 0}=\{\,x^{-1}_0,x^{-1}_{-\hf},x^{-1}_{-1},\cdots\,\}$,
and
$$\ov{D}^\mf{a}=\prod_{i,j\in\N}\frac{(1-x_{-i+1}^{-1}x_{j})(1-x^{-1}_{-i+\hf}x_{j-\hf})}
{(1+x_{-i+1}^{-1}x_{j-\hf})(1+x^{-1}_{-i+\hf}x_j)}.$$
\begin{thm}\label{char:super-A}
For $\la\in\mathcal{P}({\rm GL}(\ell))$, we have
\begin{align*}
&{\rm ch}L(\hgl_\dinfty,\SLa^{\mf
a}(\la))=\frac{1}{\ov{D}^{\mf{a}}}\sum_{k=0}^\infty\sum_{w\in
W^0_k}(-1)^k HS_{(\la^+_w)'}({\bf x}_{>0}) HS_{\la^-_w}({\bf
x}^{-1}_{\le 0}).
\end{align*}
\end{thm}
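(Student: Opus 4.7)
The strategy is to combine the super Howe duality character identity \eqnref{character1} with the classical Kostant-type formula \eqnref{char:schur}, using the Berele--Regev supersymmetric Cauchy identity
$$
\prod_{i=1}^\ell\frac{\prod_n(1+y_nz_i)}{\prod_n(1-x_nz_i)}=\sum_{\la\in\mc P^+}HS_\la(x,y)\,s_\la(z)
$$
to translate between Schur and hook Schur functions. First, I would apply this identity separately to each of the $z$- and $z^{-1}$-factors of the left-hand side of \eqnref{character1}, with the half-integer-indexed variables playing the bosonic role and the integer-indexed variables playing the fermionic role, and invoke \lemref{aux:hookSchur} to reconcile the bosonic/fermionic convention mismatch on the negative side. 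This rewrites the left-hand side of \eqnref{character1} in the form
$$
\sum_{\alpha,\beta\in\mc P^+}HS_\alpha(\mathbf x_{>0})\,HS_{\beta'}(\mathbf x_{\le 0}^{-1})\,s_\alpha(z_1,\ldots,z_\ell)\,s_\beta(z_1^{-1},\ldots,z_\ell^{-1}).
$$

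Next, I would specialize the two-alphabet super Cauchy identity $\prod_{i,j}(1+x_iv_j)(1+y_iu_j)/((1-x_iu_j)(1-y_iv_j))=\sum_\nu HS_\nu(x;y)HS_\nu(u;v)$ with the alphabets $(x,y)=(\{x_{r}\}_{r\in 1/2+\Z_+},\{x_n\}_{n\in\N})$ and $(u,v)=(\{x_{-r}^{-1}\}_{r\in 1/2+\Z_+},\{x_{-n+1}^{-1}\}_{n\in\N})$ to obtain the denominator identity
$$
\frac{1}{\ov D^{\mf a}}=\sum_{\nu\in\mc P^+}HS_\nu(\mathbf x_{>0})\,HS_{\nu'}(\mathbf x_{\le 0}^{-1}).
$$
In parallel, \eqnref{combid-classical1} combined with \eqnref{char:schur} expresses the dual Cauchy expansion of the left-hand side of \eqnref{combid-classical1} in terms of Kostant alternating sums $\sum_{k,w}(-1)^k s_{\mu_w^+}(\mathbf x_+)s_{\mu_w^-}(\mathbf x_-^{-1})/D^{\mf a}$ indexed by $W^0_k$. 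Using the Schur product expansion of $s_\alpha(z)s_\beta(z^{-1})$ in $\mathrm{GL}(\ell)$-characters together with this comparison, I would identify the coefficient of each $\mathrm{ch}\,V^\la_{\mathrm{GL}(\ell)}$ in the super expansion as $\sum_{k,w\in W^0_k}(-1)^k HS_{(\la_w^+)'}(\mathbf x_{>0})HS_{\la_w^-}(\mathbf x_{\le 0}^{-1})/\ov D^{\mf a}$.

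Finally, matching coefficients of $\mathrm{ch}\,V^\la_{\mathrm{GL}(\ell)}$ on both sides of \eqnref{character1} and invoking the linear independence of irreducible $\mathrm{GL}(\ell)$-characters yields the claim. The main obstacle will be the combinatorial bookkeeping required to collapse the multiple sums over auxiliary shapes $(\mu,\alpha,\beta)$ produced by the various Cauchy expansions into a single Kostant alternating sum indexed by $(k,w)$ with hook Schur coefficients; this relies on the Pieri-type content of the defining identity $HS_\la(x,y)=\sum_{\mu\subseteq\la}s_\mu(x)\,s_{(\la/\mu)'}(y)$. The transpose $(\la_w^+)'$ appearing on the positive side but not on the negative reflects the asymmetric bosonic/fermionic role swap enforced by \lemref{aux:hookSchur}, caused by the paper's convention for ordering variables in $\mathbf x_{>0}$ versus $\mathbf x_{\le 0}^{-1}$.
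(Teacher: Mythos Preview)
Your approach is workable in outline, but it is more circuitous than the paper's proof, and the step you yourself flag as the ``main obstacle'' is precisely where the paper's method is sharper.

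The paper proceeds by direct transformation rather than by independent expansion. Starting from the classical identity \eqnref{combid-classical1}, rewritten via \eqnref{homology:schur} and \eqnref{char:schur} as a symmetric-function identity in $\{x_j\}_{j\in\N}$ and $\{x_{-j}^{-1}\}_{j\in\Z_+}$, it first enlarges each variable set to be indexed by $\hf\N$ and $\hf\Z_+$, and then applies the standard involution $\omega$ of symmetric functions (interchanging elementary and complete symmetric functions) \emph{only in the half-integer-indexed variables}, first on the $\le 0$ side and then on the $>0$ side. Under this partial involution, $s_\mu$ in the doubled alphabet becomes $HS_{\mu'}$ (via \lemref{aux:hookSchur}), $D^{\mf a}$ becomes $\ov{D}^{\mf a}$, the left side of \eqnref{combid-classical1} becomes the left side of \eqnref{character1}, and the factors ${\rm ch}\,V^\la_{{\rm GL}(\ell)}$ in the $z$-variables are untouched. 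Comparing with \eqnref{character1} and invoking linear independence of ${\rm GL}(\ell)$-characters then finishes the proof in one stroke.

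Your route expands both sides via super-Cauchy identities and then tries to match against the classical Kostant expansion through the common $z$-structure $s_\alpha(z)s_\beta(z^{-1})$. To carry out the ``collapsing'' you describe, you would need to know that any polynomial identity among Schur functions---in particular the one expressing $D^{\mf a}\cdot{\rm ch}\,L(\hgl_\infty,\La^{\mf a}(\la))$ as a Kostant alternating sum---transfers verbatim to hook Schur functions with the same structure constants. That transfer principle is exactly the statement that the partial involution is a ring homomorphism, which is what the paper applies directly. So the two arguments converge at the same point; the paper simply gets there without first unpacking the intermediate Cauchy expansions and the auxiliary sums over $(\alpha,\beta,\nu)$.
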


\begin{proof} The proof is similar to that of Theorem 3.2 in
\cite{CL}.  First we use \eqnref{homology:schur} and
\eqnref{char:schur} to write \eqnref{combid-classical1} as an
identity in $\{x_{-j}^{-1}\}_{j\in \Z_+}$ and $\{x_{j}\}_{j\in \N}$.
Next we replace $\{x_{-j}^{-1}\}_{j\in \Z_+}$ (resp.
$\{x_{j}\}_{j\in \N}$) with $\{x^{-1}_{-j}\}_{j\in \hf\Z_+}$ (resp.
$\{x_{j}\}_{j\in \hf \N}$) in this new identity, which we regard
first as a symmetric function identity in the variables
$\{x^{-1}_{-j}\}_{j\in \hf+\Z_+}$.  This allows us to apply to it
the involution of symmetric functions that interchanges the
elementary symmetric functions with the complete symmetric functions
(see e.g.~\cite[Chapter I, \S2, (2.7)]{M}) and obtain, using
\lemref{aux:hookSchur},
\begin{align}\label{combid-super-a}
&\prod_{k=1}^\ell\prod_{i\in\N}\prod_{j\in\hf\N}
\frac{(1+x_jz_k)(1+x^{-1}_{-i+1}z_k^{-1})}{(1-x^{-1}_{-i+\hf}z_k^{-1})}=\sum_{\la\in\mathcal{P}({\rm
GL}(\ell))}{\rm ch}V^\la_{{\rm
GL}(\ell)}\times\nonumber\\
&\frac{\sum_{k=0}^\infty\sum_{w\in
W^0_k}(-1)^ks_{\la^+_w}(x_\hf,x_1,\cdots)
HS_{\la^-_w}(x^{-1}_0,x^{-1}_{-\hf},x^{-1}_{-1},\cdots)}
{\prod_{i\in\N,j\in\hf\N}(1-x^{-1}_{-i+1}x_j)/(1+x^{-1}_{-i+\hf}x_j)}.
\end{align}
Now we regard \eqnref{combid-super-a} as a symmetric function
identity in the variables $\{x_j\}_{j\in\hf+\Z_+}$ and apply the
involution again to obtain, using \lemref{aux:hookSchur},
\begin{align}
\prod_{k=1}^\ell&\frac{\prod_{n\in\N}(1+x_nz_k)
(1+x_{-n+1}^{-1}z_k^{-1})}{\prod_{r\in1/2+\Z_+}
(1-x_rz_k)(1-x_{-r}^{-1}z_k^{-1})}
=\sum_{\la\in\mathcal{P}({\rm GL}(\ell))}{\rm ch}V^\la_{{\rm GL}(\ell)}\times \nonumber\\
&\frac{\sum_{k=0}^\infty\sum_{w\in W^0_k}(-1)^k
HS_{(\la^+_w)'}(x_\hf,x_1,\cdots)
HS_{\la^-_w}(x^{-1}_0,x^{-1}_{-\hf},x^{-1}_{-1},\cdots)}
{{\prod_{i,j\in\N}(1-x_{-i+1}^{-1}x_{j})(1-x^{-1}_{-i+\hf}x_{j-\hf})}/
{(1+x_{-i+1}^{-1}x_{j-\hf})(1+x^{-1}_{-i+\hf}x_j)}}.\nonumber
\end{align}
Now the theorem follows from \eqnref{character1} and the fact that
the set  $\{{\rm ch}V^\la_{{\rm
GL}(\ell)}\vert\la\in\mathcal{P}({\rm GL}(\ell))\}$, being the set
of Schur Laurent polynomials, is linearly independent.
\end{proof}

We record an identity (cf.~\cite[Remark 3.3]{CW1}) that can be
proved similarly as \thmref{char:super-A} using the classical Cauchy
identity:
\begin{equation}\label{scauchy}
\prod_{i=1}^\ell\frac{\prod_{n\in\N}(1+x_n z_i)
}{\prod_{r\in1/2+\Z_+}
(1-x_rz_i)}=\sum_{\la\in\mathcal{P}^+,l(\la)\le\ell}{
HS}_{\la'}({\bf x}_{>0}) s_\la(z_1,\cdots,z_\ell).
\end{equation}

\subsubsection{The $(\widehat{\mc{B}},{\rm Pin}(2\ell))$-duality}

For $\lambda\in {\mc P}({\rm Pin}(2\ell))$, we define $\SLa^{\mf
b}(\lambda)\in\bar{\h}^*$ by
\begin{align*}
&\SLa^{\mf b}(\la)_i= \langle\la_i'-i \rangle,\quad i\in\N,\\
&\SLa^{\mf b}(\la)_r=\langle\la_{r+1/2}-(r-1/2) \rangle,\quad r\in\hf+\Z_+,\\
&\langle\SLa^{\mf b}(\la),K\rangle=\ell.
\end{align*}
\begin{prop} \cite[Theorem 8.2]{CW2}\label{sduality for b} There exists an action of $\widehat{\mc{B}}\times{\rm Pin}(2\ell)$ on
$\SF^{\ell}$. Furthermore, under this joint action, we have
\begin{equation}\label{aux:sdecomp-b}
\SF^{\ell}\cong\bigoplus_{\la\in {\mc P}({\rm Pin}(2\ell))}
L(\widehat{\mc{B}},\SLa^{\mf b}(\la))\otimes V_{{\rm
Pin}(2\ell)}^{\la}.
\end{equation}
\end{prop}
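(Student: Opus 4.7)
The plan is to follow the general framework for constructing super Howe dualities on Fock spaces developed in \cite{CW2}, combined with a character comparison analogous to the proof of \thmref{char:super-A}. First I would construct the $\widehat{\mc{B}}$-action on $\SF^\ell$ by writing the generators $E_{rs}$ of $\mc{B}$ as normal-ordered quadratics in the fields $\psi^{\pm,i}(z)$ and $\gamma^{\pm,i}(z)$ that respect the super-symmetric form defining $\mc{B}$ in \secref{ex:superalgebras}; the central element $K$ acts by the scalar $\ell$ required by the two-cocycle \eqnref{cocycle-super}. The ${\rm Pin}(2\ell)$-action extends the action on the purely fermionic Fock space of \propref{duality-b} to $\SF^\ell$, in that ${\rm Pin}(2\ell)$ acts diagonally across the $\ell$ copies of fermion/boson pairs through the indices $i=1,\ldots,\ell$.

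Next I would verify that the two actions commute. The key point is that the quadratic expressions defining $\widehat{\mc{B}}$ contract only on the ``infinite'' field indices (the modes $n\in\Z$ and $r\in\hf+\Z$), while ${\rm Pin}(2\ell)$ acts only on the ``finite'' internal index $i$; invariance of the super-symmetric form under ${\rm Pin}(2\ell)$ then forces the commutator to vanish. Semisimplicity of $\SF^\ell$ as a ${\rm Pin}(2\ell)$-module then yields an isotypic decomposition $\SF^\ell = \bigoplus_{\la} M_\la\otimes V^\la_{{\rm Pin}(2\ell)}$ with each multiplicity space $M_\la$ carrying a commuting $\widehat{\mc{B}}$-action.

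The main technical step is identifying each $M_\la$ as the irreducible highest weight module $L(\widehat{\mc{B}},\SLa^{\mf b}(\la))$. I would do this by character comparison, mimicking the argument in \thmref{char:super-A}. Starting from \eqnref{combid-classical-b} and \eqnref{char:schur} for type $\mf{b}$, I regard both sides as a symmetric function identity in $\{x_j\}_{j\in\N}$ and apply the involution on symmetric functions that swaps elementary and complete symmetric functions to introduce the bosonic variables $\{x_r\}_{r\in\hf+\Z_+}$; by \lemref{aux:hookSchur}, this converts Schur functions into hook Schur functions. The resulting identity matches, term by term after extracting the coefficient of ${\rm ch}\,V^\la_{{\rm Pin}(2\ell)}$, the character of $\SF^\ell$ computed directly from free-field expansions, and its $\widehat{\mc{B}}$-weight content is precisely $\SLa^{\mf b}(\la)$ as prescribed. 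Linear independence of $\{{\rm ch}\,V^\la_{{\rm Pin}(2\ell)}\}_{\la\in\mc{P}({\rm Pin}(2\ell))}$ isolates each summand.

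Finally I would exhibit an explicit joint highest weight vector in each $M_\la$, built from a polynomial in the creation modes $\psi^{+,i}_n,\psi^{-,i}_{-n},\gamma^{\pm,i}_{-r}$ acting on $|0\rangle$, and read off its $\widehat{\mc{B}}$-weight to confirm it equals $\SLa^{\mf b}(\la)$; irreducibility then follows from the character match together with standard highest weight uniqueness. The hard part is the commutativity verification together with getting the bosonic half of the ${\rm Pin}(2\ell)$-action set up correctly, since bosons naturally carry a symplectic rather than an orthogonal action; the supergeometry of $\mc{B}$ (with the sign factors $(-1)^i, (-1)^{r+\hf}$ in the bilinear form) is what converts this into a genuine ${\rm Pin}(2\ell)$-symmetry on $\SF^\ell$. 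Everything else reduces to routine character manipulations or checks on vacuum-like vectors.
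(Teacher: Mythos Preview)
The paper does not give its own proof of this proposition: it is quoted verbatim from \cite[Theorem 8.2]{CW2} and used as input, so there is nothing here to compare your argument against. In the logical flow of the present paper the proposition precedes \thmref{char:super-B} and the identity \eqnref{combid-super-b}, which are \emph{derived from} it, so within this paper your appeal to a \thmref{char:super-A}-style character manipulation sits downstream of the result you want to prove, not upstream.

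That said, as a standalone argument your outline is the standard shape for establishing such Fock space dualities (construct commuting actions from normal-ordered quadratics, pass to isotypic components, identify multiplicity spaces). The one place that is genuinely thin is your irreducibility step: matching the character of $M_\la$ to a hook Schur expression and producing a highest weight vector does not by itself force $M_\la$ to be irreducible, since you do not yet know the irreducible character independently. In \cite{CW2} this is handled by exhibiting the joint highest weight vectors explicitly and invoking the general machinery of dual pairs on Fock spaces (in particular unitarity of $\SF^\ell$, which makes every $\widehat{\mc{B}}$-submodule complemented); you would need to insert an argument of that kind rather than rely on the character match alone.
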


Computing the trace of the operator $\prod_{s\in
\hf\N}x_s^{\widetilde{E}_{s}}\prod_{i=1}^\ell z_i^{\tilde{e}_{i}}$
on both sides of \eqnref{aux:sdecomp-b}, we obtain the following
identity:
\begin{equation}\label{combid-super-b}
\prod_{i=1}^\ell(z_i^{\hf}+z_i^{-\hf})
\frac{\prod_{n\in\N}(1+x_{n}z_i)(1+x_{n}z^{-1}_{i})}{\prod_{r\in
\hf+\Z_+}(1-x_rz_i)(1-x_{r}z^{-1}_{i})} =\sum_{\la\in {\mc P}({\rm
Pin}(2\ell))} {\rm ch}L(\widehat{\mc{B}},\overline{\Lambda}^{\mf
b}(\la)){\rm ch}V^\la_{{\rm Pin}(2\ell)}.
\end{equation}

\begin{thm}\label{char:super-B}
For $\la\in\mathcal{P}({\rm Pin}(2\ell))$, we have
\begin{align*}
&{\rm ch}L(\widehat{\mc B},\SLa^{\mf b}(\la))
=\frac{1}{\ov{D}^\mf{b}}{\sum_{k=0}^\infty\sum_{w\in W^0_k}(-1)^k
HS_{(\la_w)'}({\bf x}_{>0})},
\end{align*}
where
$$\ov{D}^\mf{b}=
\prod_{\substack{i,j\in \N \\ i\leq
j}}\frac{(1-x_i)(1-x_{i}x_{j+1})(1-x_{i-\hf}x_{j-\hf})}{(1+x_{i-\hf})(1+x_{i}x_{j+\hf})(1+x_{i-\hf}x_{j})}.$$
\end{thm}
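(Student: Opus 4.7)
The plan is to mimic the proof of \thmref{char:super-A} in the type $\mf{b}$ setting. First I would substitute the classical character formula \eqnref{char:schur} together with the Kostant homology description \eqnref{homology:schur} for $\mf{x}=\mf{b}$ into the Howe duality identity \eqnref{combid-classical-b}. This yields an identity in which the left-hand side $\prod_{i=1}^\ell(z_i^{\hf}+z_i^{-\hf})\prod_{n\in\N}(1+x_nz_i)(1+x_nz_i^{-1})$ equals a sum over $\la\in\mc{P}({\rm Pin}(2\ell))$ of $\frac{1}{D^\mf{b}}\sum_{k,w}(-1)^k s_{\la_w}(x_1,x_2,\ldots)\cdot {\rm ch}V^\la_{{\rm Pin}(2\ell)}$.

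Next I would enlarge the variable set from $\{x_n\}_{n\in\N}$ to $\{x_r\}_{r\in\hf\N}$, so the identity becomes a symmetric-function identity in a twice-as-large alphabet (the extra half-integer-indexed variables being symmetric partners). Regarding this identity as a symmetric function identity in $\{x_r\}_{r\in\hf+\Z_+}$ alone (with the integer-indexed $\{x_n\}_{n\in\N}$ playing the role of parameters), I would then apply the fundamental involution $\omega$ of symmetric functions that swaps the elementary and complete symmetric functions. On the left-hand side this converts each factor $\prod_{r\in\hf+\Z_+}(1+x_r z_i^{\pm 1})$ into $\prod_{r\in\hf+\Z_+}(1-x_rz_i^{\pm 1})^{-1}$, thereby producing exactly the bosonic factors appearing on the LHS of \eqnref{combid-super-b}; on the denominator the factors $(1-x_r)$ and $(1-x_rx_s)$ (which are elementary generating functions in the half-integer variables with parameters) flip sign appropriately to produce $\ov{D}^\mf{b}$; and on each Schur function $s_{\la_w}(x_\hf,x_1,x_{\frac{3}{2}},\ldots)$ the involution, together with \lemref{aux:hookSchur}, converts it into the hook Schur function $HS_{(\la_w)'}({\bf x}_{>0})$.

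Finally, after these manipulations the transformed identity and \eqnref{combid-super-b} become expansions of the same left-hand side in terms of the basis $\{{\rm ch}V^\la_{{\rm Pin}(2\ell)}\}_{\la\in\mc{P}({\rm Pin}(2\ell))}$ of Laurent polynomials in $z_1,\ldots,z_\ell$. Using the linear independence of this family of characters, I would match coefficients to read off the desired formula for ${\rm ch}L(\widehat{\mc{B}},\SLa^{\mf b}(\la))$.

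The delicate point, and the main obstacle, will be bookkeeping the exact transformation of the denominator: each mixed factor $(1-x_ix_r)$ with $i\in\N$ and $r\in\hf+\Z_+$ must be interpreted as $\sum_n(-x_i)^n e_n(\{x_r\})$ in order to apply $\omega$, yielding $(1+x_ix_r)^{-1}$, and similarly for the purely half-integer factors. Keeping careful track of the index ranges $i\leq j$, together with the product of single factors $(1-x_r)$ from the $-\epsilon_i$ roots, is what produces the precise denominator $\ov{D}^\mf{b}$ displayed in the statement; the rest of the argument is a direct parallel of \thmref{char:super-A}.
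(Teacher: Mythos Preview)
Your proposal is correct and follows essentially the same route as the paper: substitute \eqnref{char:schur} and \eqnref{homology:schur} into \eqnref{combid-classical-b}, enlarge the alphabet to $\{x_r\}_{r\in\hf\N}$, apply the involution in the half-integer variables (using \lemref{aux:hookSchur} to identify the resulting hook Schur functions), and conclude by comparing with \eqnref{combid-super-b} via linear independence of the ${\rm Pin}(2\ell)$-characters. The only difference is that the paper omits the explicit bookkeeping of the denominator transformation that you outline.
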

\begin{proof}
The proof is similar to that of \thmref{char:super-A}. Replace
$\{x_{j}\}_{j\in \N}$ with $\{x_{j}\}_{j\in \hf \N}$ in
\eqnref{combid-classical-b}, and then apply the involution to the
symmetric functions in $\{x_{j}\}_{j\in \hf+ \Z_+}$
(cf.~\cite[Chapter I \S 5 Ex. 4,5]{M}). Then the result follows from
the linear independence of $\{\,{\rm ch}V^\la_{{\rm
Pin}(2\ell)}\,|\,\la\in\mc{P}({\rm Pin}(2\ell))\,\}$.
\end{proof}

\subsubsection{The $(\widehat{\mc{C}},{\rm Sp}(2\ell))$-duality} For $\lambda\in
{\mc P}({\rm Sp}(2\ell))$, we define $\SLa^{\mf
c}(\lambda)\in\bar{\h}^*$ by
\begin{align*}
&\SLa^{\mf c}(\la)_i= \langle\la_i'-i \rangle,\quad i\in\N,\\
&\SLa^{\mf c}(\la)_r=\langle\la_{r+1/2}-(r-1/2) \rangle,\quad r\in\hf+\Z_+,\\
&\langle\SLa^{\mf c}(\la),K\rangle=\ell.
\end{align*}

\begin{prop} \cite[Theorem 5.3]{LZ}\label{sduality for c} There exists an action of $\widehat{\mc{C}}\times{\rm
Sp}(2\ell)$ on $\SF_0^{\ell}$.  Furthermore, under this joint
action, we have
\begin{equation}\label{aux:sdecomp-c}
\SF_0^\ell\cong\bigoplus_{\la\in {\mc P}({\rm Sp}(2\ell))}
L(\widehat{\mc{C}},\SLa^{\mf c}(\la))\otimes V_{{\rm
Sp}(2\ell)}^{\la}.
\end{equation}
\end{prop}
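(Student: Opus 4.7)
The plan is to adapt the strategy used for the closely related dual pairs in \cite{CW2, W1}. The argument proceeds in three steps: construct explicit commuting actions of $\widehat{\mc C}$ and ${\rm Sp}(2\ell)$ on $\SF_0^\ell$, verify they form a dual pair, and extract the decomposition via a character calculation.

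First I would realize the joint action. On the $2\ell$-dimensional indexing space spanned by the pair-labels of the oscillators $\{\widetilde{\psi}^{\pm,i},\gamma^{\pm,i}\}_{i=1}^\ell$, let ${\rm Sp}(2\ell)$ act through its defining representation, with the symplectic form determined by the canonical pairing between $+$ and $-$ species. The action of $\widehat{\mc C}$ is obtained by realizing each matrix unit $E_{rs}\in\mc C\subset \gl_\dinfty$ as a normally ordered quadratic expression in the oscillator modes $\widetilde{\psi}^{\pm,i}_n$, $\gamma^{\pm,i}_r$, where the pairing between oscillator labels is exactly the super-skew-symmetric form defining $\mc C$. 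The central element $K$ arises from the normal ordering constants, reproducing the 2-cocycle \eqnref{cocycle-super}; crucially, the exclusion of the $0$-mode of $\widetilde\psi^{\pm,i}$ is what forces the realization to land inside the subalgebra $\mc A\subset\gl_\dinfty$ (matrices vanishing on the $0$-row/column), and hence inside $\mc C$.

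Second, I would check that the two actions commute. Since the $\widehat{\mc C}$-operators are built from ${\rm Sp}(2\ell)$-invariant contractions of oscillators, this is a direct computation with the commutation relations; the only delicate point is that the normal-ordering constants (and hence the value of $K$) are ${\rm Sp}(2\ell)$-invariant, which is immediate from invariance of the trace.

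Third, I would extract the decomposition by a character comparison coupled with Howe's double commutant principle. Computing the trace of $\prod_{s\in\hf\N}x_s^{\widetilde E_s}\prod_{i=1}^\ell z_i^{\tilde e_i}$ on $\SF_0^\ell$ directly from the free-field realization yields
\begin{equation*}
\prod_{i=1}^\ell\frac{\prod_{n\in\N}(1+x_nz_i)(1+x_nz_i^{-1})}{\prod_{r\in\hf+\Z_+}(1-x_rz_i)(1-x_rz_i^{-1})}.
\end{equation*}
The same expression can be obtained from the classical identity \eqnref{combid-classical-c} by enlarging the variable set $\{x_n\}_{n\in\N}$ to $\{x_s\}_{s\in\hf\N}$ and applying the involution $\omega$ of symmetric functions (interchanging elementary and complete symmetric functions) in the half-integer variables $\{x_r\}_{r\in\hf+\Z_+}$; this is the same move used in the proof of \thmref{char:super-A}. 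Combined with \propref{duality-c}, this expresses the character of $\SF_0^\ell$ as $\sum_{\la\in\mc P({\rm Sp}(2\ell))}f_\la(x)\,{\rm ch}\,V_{{\rm Sp}(2\ell)}^\la$ for computable functions $f_\la$, and linear independence of the $\{{\rm ch}\,V_{{\rm Sp}(2\ell)}^\la\}$ pins down the $\widehat{\mc C}$-multiplicities.

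The main obstacle will be verifying that each $\widehat{\mc C}$-isotypic component actually is the irreducible highest weight module $L(\widehat{\mc C},\SLa^{\mf c}(\la))$, rather than merely a module with the correct formal character. To handle this, for each $\la\in\mc P({\rm Sp}(2\ell))$ I would write down an explicit joint highest weight vector $v_\la\in\SF_0^\ell$ (a product of the appropriate positive-mode oscillators acting on the vacuum, paired with an ${\rm Sp}(2\ell)$-highest weight vector in the indexing space), and directly compute its $\overline{\h}$-weight; the formulas defining $\SLa^{\mf c}(\la)$ (with the $\langle\cdot\rangle$-truncation reflecting the fermionic versus bosonic origin of each component) arise precisely from counting how many fermionic/bosonic modes contribute to $v_\la$ in each slot. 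The double commutant theorem, applied to the mutually commuting actions constructed in step one, then guarantees multiplicity-freeness and irreducibility of the summands, completing the decomposition \eqnref{aux:sdecomp-c}.
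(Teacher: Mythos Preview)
The paper does not give its own proof of this proposition; it is quoted verbatim from \cite[Theorem 5.3]{LZ} and used as a black box. Your outline is a faithful sketch of the method actually employed in \cite{LZ} (and in \cite{CW2, W1} for the analogous dual pairs): realize $\widehat{\mc C}$ via normally ordered quadratics in the oscillators, let ${\rm Sp}(2\ell)$ act on the pair-labels, check commutativity, exhibit explicit joint highest weight vectors, and invoke a double-commutant argument for multiplicity-freeness.

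One remark on the logical flow of your step three: the character identity \eqnref{combid-super-c} is in this paper a \emph{consequence} of \propref{sduality for c}, not a tool used to prove it. Your use of the involution $\omega$ to relate \eqnref{combid-classical-c} to the super Fock space character is precisely the argument the paper deploys \emph{after} the proposition, to prove \thmref{char:super-C}. For the proposition itself, the explicit highest weight vectors together with the double-commutant principle already suffice to establish the decomposition \eqnref{aux:sdecomp-c}; the character comparison is then a check rather than an independent input. Also, be aware that the double-commutant theorem in this infinite-dimensional Fock space setting requires some care (one typically uses the unitarity of the Fock realization, or argues directly that the joint highest weight vectors span the space of $\widehat{\mc C}_+\times \mf n^+({\rm Sp}(2\ell))$-invariants); this is handled in \cite{LZ} but would need to be addressed explicitly in a self-contained proof.
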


Computing the trace of the operator
$\prod_{s\in\hf\N}x_s^{\widetilde{E}_{s}}\prod_{i=1}^\ell
z_i^{\widetilde{e}_{i}}$ on both sides of \eqnref{aux:sdecomp-c}, we
obtain the following:

\begin{equation}\label{combid-super-c}
\prod_{i=1}^\ell\frac{\prod_{n\in\N}(1+x_n z_i)
(1+x_{n}z_i^{-1})}{\prod_{r\in\hf+\Z_+}
(1-x_rz_i)(1-x_{r}z_i^{-1})}=\sum_{\la\in\mathcal{P}({\rm
Sp}(2\ell))}{\rm ch}L(\widehat{\mc C},\SLa^{\mf c}(\la)){\rm
ch}V^\la_{{\rm Sp}(2\ell)}.
\end{equation}

Similar to \thmref{char:super-B}, we obtain the following.
\begin{thm}\label{char:super-C}
For $\la\in\mathcal{P}({\rm Sp}(2\ell))$, we have
\begin{align*}
&{\rm ch}L(\widehat{\mc C},\SLa^{\mf c}(\la)) =\frac{1}{\ov
D^\mf{c}}{\sum_{k=0}^\infty\sum_{w\in W^0_k}(-1)^k
HS_{(\la_w)'}({\bf x}_{>0})},
\end{align*}
where
$${\ov D}^\mf{c}={\prod_{\substack{i,j\in \N \\
i\leq j
}}\frac{(1-x_{i}x_{j})(1-x_{i-\hf}x_{j+\hf})}{(1+x_{i}x_{j+\hf})(1+x_{i-\hf}x_{j})}}.$$
\end{thm}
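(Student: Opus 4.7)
The plan is to mimic the proofs of \thmref{char:super-A} and \thmref{char:super-B}; the author has even flagged this explicitly with the phrase ``Similar to \thmref{char:super-B}.'' Starting from the classical combinatorial identity \eqnref{combid-classical-c}, I would first use \eqnref{char:schur} together with \eqnref{homology:schur} for type $\mf c$ to replace ${\rm ch}L(\mathfrak{c}_\infty,\Lambda^{\mf c}(\la))$ by $\frac{1}{D^\mf c}\sum_{k\ge 0}\sum_{w\in W_k^0}(-1)^k s_{\la_w}(x_1,x_2,\ldots)$. The result is a generating function identity in the formal variables $\{x_j\}_{j\in\N}$ and $\{z_i\}_{i=1}^\ell$.

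The second step is to view this identity, after the obvious extension, as a symmetric function identity in the larger variable set $\{x_j\}_{j\in\hf\N}$, and then apply the classical involution $\omega$ on symmetric functions in only the half-integer variables $\{x_j\}_{j\in\hf+\Z_+}$ (cf.~\cite[Chapter I, \S5, Ex.~4,5]{M}). Under $\omega$, each factor $1-x_r z_i^{\pm 1}$ with $r\in\hf+\Z_+$ passes to $(1-x_r z_i^{\pm 1})^{-1}$, the classical denominator $D^\mf c$ transforms into $\ov D^\mf c$, and, via \lemref{aux:hookSchur} combined with the decomposition
\[
s_{\la_w}(x_\hf,x_1,x_{3/2},\ldots)=\sum_{\mu\subseteq \la_w}s_\mu(x_1,x_2,\ldots)\, s_{\la_w/\mu}(x_\hf,x_{3/2},\ldots),
\]
the Schur function $s_{\la_w}$ becomes the hook Schur function $HS_{(\la_w)'}({\bf x}_{>0})$. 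After these substitutions the left-hand side of the identity is exactly that of \eqnref{combid-super-c}.

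Comparing the resulting identity with \eqnref{combid-super-c} and invoking the linear independence of the symplectic Schur Laurent polynomials $\{{\rm ch}V^\la_{{\rm Sp}(2\ell)} \mid \la\in\mathcal P({\rm Sp}(2\ell))\}$ in $z_1,\ldots,z_\ell$, I can equate coefficients to extract the claimed formula for ${\rm ch}L(\widehat{\mc C},\SLa^{\mf c}(\la))$.

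The main obstacle I anticipate is the bookkeeping required to confirm that $\omega$, acting only in the half-integer variables, carries $D^\mf c$ (after extension to $\{x_j\}_{j\in\hf\N}$, namely $\prod_{i\in\hf\N}(1-x_i^2)\prod_{i<j,\,i,j\in\hf\N}(1-x_ix_j)$) precisely onto the product $\ov D^\mf c$ displayed in the statement, indexed by $i\le j$ in $\N$. One must partition the factors according to whether both indices lie in $\N$, both in $\hf+\Z_+$, or are mixed, and verify that the action of $\omega$ on each type produces the correct integer-indexed factor in the numerator or denominator of $\ov D^\mf c$. Once this compatibility is checked, all remaining steps are formal manipulations paralleling \thmref{char:super-B}.
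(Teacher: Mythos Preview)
Your proposal is correct and follows exactly the approach the paper intends: the paper's own proof is literally the one-line remark ``Similar to \thmref{char:super-B},'' and your elaboration---rewrite \eqnref{combid-classical-c} via \eqnref{char:schur} and \eqnref{homology:schur}, extend the variable set to $\{x_j\}_{j\in\hf\N}$, apply the involution $\omega$ in the half-integer variables, and then compare with \eqnref{combid-super-c} using linear independence of $\{{\rm ch}V^\la_{{\rm Sp}(2\ell)}\}$---is precisely that argument spelled out. One small correction: the left-hand side of \eqnref{combid-classical-c} has factors $(1+x_r z_i^{\pm 1})$, so under $\omega$ in the half-integer variables these become $(1-x_r z_i^{\pm 1})^{-1}$, matching \eqnref{combid-super-c}; your description had the sign reversed, but the mechanism is right.
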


\subsubsection{The $(\widehat{\mc{D}},{\rm
O}(m))$-duality}\label{omegaappears}

For $\lambda\in {\mc P}({\rm O}(m))$, we define $\SLa^{\mf
d}(\lambda)\in\bar{\h}^*$ by
\begin{align*}
&\SLa^{\mf d}(\la)_i= \langle\la_i'-i \rangle,\quad i\in\N,\\
&\SLa^{\mf d}(\la)_r=\langle\la_{r+1/2}-(r-1/2) \rangle,\quad r\in\hf+\Z_+,\\
&\langle\SLa^{\mf d}(\la),K\rangle=\frac{m}{2}.
\end{align*}
\begin{prop} \cite[Theorems 5.3 and 5.4]{LZ}\label{sduality for d} There exists an action of $\widehat{\mc{D}}\times{\rm
O}(m)$ on $\SF_0^{\frac{m}{2}}$. Furthermore, under this joint
action, we have
\begin{equation}\label{aux:sdecomp-d}
\SF_0^{\frac{m}{2}}\cong\bigoplus_{\la\in {\mc P}({\rm O}(m))}
L(\widehat{\mc{D}},\SLa^{\mf d}(\la))\otimes V_{{\rm O}(m)}^{\la}.
\end{equation}
\end{prop}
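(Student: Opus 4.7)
The plan is to construct two commuting actions on $\SF_0^{m/2}$, identify joint highest weight vectors, and verify the decomposition by character matching.

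First I would realize the Lie superalgebra $\widehat{\mc{D}}$ inside the oscillator algebra on $\SF_0^{m/2}$. The subalgebra $\mc D \subset \gl_{\dinfty}$ preserves the super-skew-symmetric form specified in \secref{ex:superalgebras}, so its generators can be written as normal-ordered quadratic expressions in the modes $\widetilde{\psi}^{\pm,i}_n$, $\gamma^{\pm,i}_r$, and (for odd $m$) $\widetilde{\phi}_n$ and $\chi_r$. A direct anomaly computation, matching the super-trace of $[\bar J,\cdot]$ in \eqnref{cocycle-super} against the normal-ordering constants, shows that the central element $K$ acts by the scalar $m/2$. The group ${\rm O}(m)$ then acts by rotations on the $m$-dimensional ``internal'' space: for $m=2\ell$ one groups the fields $(\widetilde{\psi}^{\pm,i},\gamma^{\pm,i})$ ($1\le i\le\ell$) into an ${\rm O}(2\ell)$-vector with respect to the form $J_m$ of \eqnref{side:diagonal}, and for $m=2\ell+1$ the neutral fields $\widetilde{\phi}$ and $\chi$ supply the extra dimension.

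Next I would verify that the two actions commute, which is a direct calculation from the oscillator commutation relations, and exhibit explicit joint highest weight vectors. For each $\la\in\mc{P}({\rm O}(m))$ with $\la'_1+\la'_2\le m$, a joint highest weight vector is constructed as a Young-symmetrized monomial in the lowest creation modes $\widetilde{\psi}^{+,i}_{-n},\gamma^{+,i}_{-r}$ applied to the vacuum, indexed by the columns of $\la$, in close parallel to the classical $(\mf d_\infty,{\rm O}(m))$-duality of \propref{duality-d}. Reading off the $\ov{\h}$-weight of this vector and comparing with the formulas in \secref{omegaappears} confirms that the highest weight is $\SLa^{\mf d}(\la)$, and the existence of such vectors together with the completeness of the oscillator algebra reduces the proposition to a multiplicity computation.

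The decomposition is then completed by a character argument. Computing the trace of $\prod_{s\in\hf\N}x_s^{\widetilde E_s}\prod_i z_i^{\tilde e_i}$ (and, for odd $m$, including $(-I_m)$) on $\SF_0^{m/2}$ produces the infinite product obtained from \eqnref{combid-classical-d1} or \eqnref{combid-classical-d2} by replacing $\{x_n\}_{n\in\N}$ with $\{x_s\}_{s\in\hf\N}$ and applying the involution on symmetric functions interchanging elementary and complete symmetric functions, exactly as in the proofs of \thmref{char:super-B} and \thmref{char:super-C}. Using the linear independence of $\{{\rm ch}V^\la_{{\rm O}(m)}\}$ one then reads off that each $L(\widehat{\mc D},\SLa^{\mf d}(\la))$ appears with multiplicity $V^\la_{{\rm O}(m)}$. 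The main obstacle I anticipate is handling the disconnected component of ${\rm O}(m)$: one must verify that the outer involution $\tau\in{\rm O}(2\ell)\setminus{\rm SO}(2\ell)$, or the element $-I_m$ in the odd case, acts with the prescribed sign on each joint highest weight vector, so that the isotypic component is $V^\la_{{\rm O}(m)}$ rather than its twist $V^{\tilde\la}_{{\rm O}(m)}=V^\la_{{\rm O}(m)}\otimes\det$. This requires a careful bookkeeping of the fermion number on the Fock vacuum and on each joint highest weight vector, analogous to the discussion preceding \propref{duality-d}.
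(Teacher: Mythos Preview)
The paper does not supply a proof of this proposition; it is quoted from \cite[Theorems~5.3 and~5.4]{LZ} and stated without argument, just as the companion Propositions~\ref{sduality-a}, \ref{sduality for b}, \ref{sduality for c} are quoted from \cite{CW2} and \cite{LZ}. Your sketch is the standard strategy used in those references: realize $\widehat{\mc D}$ by normal-ordered quadratic expressions in the oscillator modes, let ${\rm O}(m)$ act linearly on the internal index, check commutation, and classify joint highest weight vectors. So there is nothing to compare against in the present paper, and your outline is consonant with the cited source.

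One caution on your final step. The identities \eqnref{combid-super-d1} and \eqnref{combid-super-d2} are obtained in the paper \emph{from} the decomposition \eqnref{aux:sdecomp-d} by taking traces, so they cannot be used to establish the multiplicities without circularity. What a character argument can legitimately do is the following: once you know that the two actions commute and that $\SF_0^{m/2}$ is completely reducible under ${\rm O}(m)$, the Fock space splits as $\bigoplus_\la M_\la\otimes V^\la_{{\rm O}(m)}$ for some $\widehat{\mc D}$-modules $M_\la$, and then the product formula for the trace, expanded against the linearly independent characters ${\rm ch}V^\la_{{\rm O}(m)}$, determines ${\rm ch}M_\la$. You still need an independent argument that each $M_\la$ is irreducible with the stated highest weight; in \cite{LZ} this comes from a direct classification of the joint singular vectors, and your construction of explicit highest weight vectors is the right ingredient for that. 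Your remark about the disconnected component of ${\rm O}(m)$ is also to the point: distinguishing $V^\la_{{\rm O}(m)}$ from $V^{\tilde\la}_{{\rm O}(m)}$ on the $\widehat{\mc D}$ side is exactly the subtlety that forces the separate treatment of $\la$ and $\tilde\la$ in \thmref{char:super-D}(2).
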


Suppose that $m=2\ell$. Computing the trace of the operator
$\prod_{s\in \hf\N}x_s^{\widetilde{E}_{s}}\prod_{i=1}^\ell
z_i^{\tilde{e}_{i}}$ on both sides of \eqnref{aux:sdecomp-d}, we
obtain the following identity:
\begin{equation}\label{combid-super-d1}
\prod_{i=1}^\ell
\frac{\prod_{n\in\N}(1+x_nz_i)(1+x_{n}z^{-1}_{i})}{\prod_{r\in
\hf+\Z_+}(1-x_rz_i)(1-x_{r}z^{-1}_{i})} =\sum_{\la\in {\mc P}({\rm
O}(2\ell))} {\rm ch}L(\widehat{\mc{D}},\overline{\Lambda}^{\mf
d}(\la)){\rm ch}V^\la_{{\rm O}(2\ell)}.
\end{equation}

Suppose that $m=2\ell+1$. Let $\epsilon$ be the eigenvalue of $-I_m$
on ${\rm O}(2\ell+1)$-modules satisfying $\epsilon^2=1$. Calculating
the trace of the operator $\prod_{s\in
\hf\N}x_s^{\widetilde{E}_{s}}\prod_{i=1}^\ell
z_i^{\tilde{e}_{i}}(-I_m)$ on \eqnref{aux:sdecomp-d} yields
\begin{equation}\label{combid-super-d2}
\prod_{i=1}^\ell \frac{\prod_{n\in\N}(1+\epsilon x_nz_i)(1+\epsilon
x_{n}z^{-1}_{i})(1+\epsilon x_n)}{\prod_{r\in \hf+\Z_+}(1-\epsilon
x_rz_i)(1-\epsilon x_{r}z^{-1}_{i})(1-\epsilon x_r)} =\sum_{\la\in
{\mc P}({\rm O}(2\ell+1))} {\rm
ch}L(\widehat{\mc{D}},\overline{\Lambda}^{\mf d}(\la)){\rm
ch}V^\la_{{\rm O}(2\ell+1)}.
\end{equation}

\begin{thm}\label{char:super-D}
Suppose that $\la\in\mathcal{P}({\rm O}(m))$ is given. Put
$${\ov D^\mf{d}}={\prod_{\substack{i,j\in \N \\
i\leq j
}}\frac{(1-x_{i}x_{j+1})(1-x_{i-\hf}x_{j-\hf})}{(1+x_{i}x_{j+\hf})(1+x_{i-\hf}x_{j})}}.$$
\begin{itemize}
\item[(1)] If $m=2\ell+1$, then we have
\begin{align*}
&{\rm ch}L(\widehat{\mc D},\SLa^{\mf d}(\la))
=\frac{1}{\ov{D}^\mf{d}}{\sum_{k=0}^\infty\sum_{w\in W^0_k}(-1)^k
HS_{(\la_w)'}({\bf x}_{>0})} .
\end{align*}

\item[(2)] If $m=2\ell$, then we have
\begin{align*}
&{\rm ch}L(\widehat{\mc D},\SLa^{\mf d}(\la))
+{\rm ch}L(\widehat{\mc D},\SLa^{\mf d}(\tilde{\la})) \\
& =\frac{1}{\ov{D}^\mf{d}}{\sum_{k=0}^\infty\sum_{w\in W^0_k}(-1)^k
\left[ HS_{(\la_w)'}({\bf x}_{>0})+HS_{(\tilde{\la}_w)'}({\bf
x}_{>0}) \right]}.
\end{align*}
\end{itemize}
\end{thm}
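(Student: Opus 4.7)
The plan is to run, in parallel with Theorems \ref{char:super-A}--\ref{char:super-C}, the same Kostant-plus-involution argument, adapted to the two parities of $m$. First I would take the classical character identities \eqref{combid-classical-d1} (for $m=2\ell$) and \eqref{combid-classical-d2} (for $m=2\ell+1$), and apply \propref{homologyclassical} together with \eqref{homology:schur} and \eqref{char:schur} to rewrite each ${\rm ch}\,L(\mathfrak{d}_{\infty},\Lambda^{\mf d}(\la))$ as an alternating sum of Schur functions $s_{\la_w}(x_1,x_2,\ldots)$ divided by $D^{\mf d}$. This turns each side into a symmetric function identity in $\{x_j\}_{j\in\N}$, and in the odd case with the extra variable $\epsilon$.

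Second, I would enlarge the alphabet by substituting $\{x_j\}_{j\in\N}\mapsto\{x_j\}_{j\in\hf\N}$ and regarding the result as a symmetric function identity in the sub-alphabet $\{x_r\}_{r\in\hf+\Z_+}$. Applying the involution $\omega$ exchanging elementary and complete symmetric functions in those variables (as in \cite[Ch.~I \S2 (2.7)]{M}; cf.\ \cite[Ch.~I \S5 Ex.~4, 5]{M}), and invoking \lemref{aux:hookSchur}, converts each $s_{\la_w}$ into $HS_{(\la_w)'}({\bf x}_{>0})$, turns the Cauchy-type factors $\prod_r(1+x_rz_i^{\pm 1})$ (and $(1+\epsilon x_r)$ in the odd case) into their bosonic super-analogues $\prod_r(1-x_rz_i^{\pm 1})^{-1}$ (and $(1-\epsilon x_r)^{-1}$), and reshapes the leftover pieces of $D^{\mf d}$ into precisely $\ov D^{\mf d}$. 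Comparing the resulting expression with \eqref{combid-super-d1} or \eqref{combid-super-d2} then gives an equality of the form $\sum_\la(\text{Kostant-sum})_\la\,{\rm ch}V^\la_{{\rm O}(m)}=\sum_\la {\rm ch}L(\widehat{\mc D},\SLa^{\mf d}(\la))\,{\rm ch}V^\la_{{\rm O}(m)}$.

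Third, to extract the claimed formulas I would use the linear (in)dependence of the ${\rm O}(m)$-characters, and this is where the two cases diverge. For $m=2\ell+1$, by the remark following \propref{duality-d} we have ${\rm ch}V^\la_{{\rm O}(2\ell+1)}=\epsilon\,{\rm ch}V^{\tilde\la}_{{\rm O}(2\ell+1)}$, so the family $\{{\rm ch}V^\la_{{\rm O}(2\ell+1)}\}_{\la\in\mc P({\rm O}(2\ell+1))}$ is linearly independent as Laurent polynomials in $z_1,\ldots,z_\ell,\epsilon$, and matching coefficients yields part~(1). For $m=2\ell$ we instead have ${\rm ch}V^\la_{{\rm O}(2\ell)}={\rm ch}V^{\tilde\la}_{{\rm O}(2\ell)}$, so only the sum of coefficients over each $\{\la,\tilde\la\}$-orbit of the involution is isolable, producing the symmetrised identity in part~(2). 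The main obstacle I expect is purely book-keeping in the second step: tracking how the leftover factors from $D^{\mf d}$ combine with the $\omega$-substitutions in the half-integer variables to reassemble cleanly into $\ov D^{\mf d}$; the remaining structure is essentially a reprise of the arguments used for Theorems \ref{char:super-B} and \ref{char:super-C}.
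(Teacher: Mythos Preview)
Your proposal is correct and follows essentially the same route as the paper's own proof: rewrite the classical identities \eqref{combid-classical-d1}--\eqref{combid-classical-d2} via \eqref{homology:schur} and \eqref{char:schur}, enlarge the alphabet to $\{x_j\}_{j\in\hf\N}$, apply the involution in the half-integer variables (with \lemref{aux:hookSchur}), and then match coefficients against \eqref{combid-super-d1}--\eqref{combid-super-d2}. Your handling of the two parities is exactly what the paper does---linear independence of $\{{\rm ch}V^\la_{{\rm O}(2\ell+1)}\}$ in the odd case, and in the even case only the sum over each $\{\la,\tilde\la\}$-pair is determined (the paper phrases this as comparing coefficients for $\la$ of length $\le\ell$, i.e.\ one representative per orbit, citing \cite[Lemma~6.1]{CZ2}).
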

\begin{proof}
The proof is similar to that of \thmref{char:super-B}. However, note
that in the case of (2), $V^\la_{{\rm O}(2\ell)}$ and
$V^{\tilde{\la}}_{{\rm O}(2\ell)}$ are isomorphic as
$\mf{so}(2\ell)$-modules, and they give the same character. Hence,
the result follows from comparing the coefficients of ${\rm
ch}V^{{\la}}_{{\rm O}(2\ell)}$ for $\la\in\mc{P}({\rm O}(2\ell))$
with length less than or equal to $\ell$ (cf. ~\cite[Lemma
6.1]{CZ2}).
\end{proof}

From now on we mean by $(\overline{\mf{g}},G)$ one of the dual pairs
of \secref{super:dualpairs}, and by $\mf{x}\in\{\mf{a,b,c,d}\}$ the
type of $\SG$. The map (consisting of involutions of symmetric
functions) that transforms (\ref{char:schur}) into ${\rm
ch}L(\SG,\La^\mf{x}(\la))$ of the form in Theorems
\ref{char:super-A}-\ref{char:super-D} will be denoted by
$\omega^\mf{x}$.

\section{The Casimir operators}\label{casimir:op}

\subsection{The bilinear form $(\cdot|\cdot)_c$ and the Casimir operator $\Omega$ of
$\G$}\label{biformc}

 Suppose first that $\G=\hgl_{\infty}$.  We fix a symmetric
bilinear form $(\cdot\vert\cdot)_c$ on $\h^*$ satisfying
\begin{align*}
&(\la\vert \epsilon_i)_c=\langle \la,E_{ii}-{\zeta}(i)\frac{K}{2}\rangle,  \quad \la\in\h^*,i\in\Z,\\
&(\La^{\mf a}_0\vert\La^{\mf a}_0)_c=(\La^{\mf
a}_0\vert\rho_{c})_c=0,
\end{align*}
where ${\zeta}(i)=1$ (resp. $-1$) if $i> 0$ (resp. $i\leq 0$). Such
a form exists, since the vectors $\{\epsilon_i\}_{i\in\Z}$,
$\La^{\mf a}_0$ and $\rho_{c}$ are linearly independent. We check
easily that
\begin{align*}
&(\epsilon_i\vert\epsilon_j)_c=\delta_{ij},\quad (\Lambda^{\mf
a}_0\vert\epsilon_i)_c=-\frac{\zeta(i)}{2},\quad i,j\in \Z,\\
&(\rho_c\vert\alpha_i)_c=\frac{1}{2}(\alpha_i\vert\alpha_i)_c, \quad
i\in I.
\end{align*}

Suppose that $\G=\mf{x}_{\infty}$ with $\mf{x}\in\{\mf{b,c,d}\}$. We
choose a symmetric bilinear form $(\cdot\vert\cdot)_c$ on $\h^*$
satisfying
\begin{align*}
&(\la\vert \epsilon_i)_c =\langle \la,\widetilde{E}_{i}-K\rangle,
\quad i\in\N,\\
&(\La^\mf{x}_0\vert\La^\mf{x}_0)_c=(\La^\mf{x}_0\vert\rho_c)_c =0.
\end{align*}
Also one checks that $(\epsilon_i\vert\epsilon_j)_c=\delta_{ij}$,
$(\Lambda^\mf{x}_0\vert\epsilon_i)_c=-r$ for $i,j\in \N$, where
$r=\hf,1,\hf$  for $\mf{x}=\mf{b,c,d}$, respectively, and
$2(\rho_c\vert\alpha_i)_c=(\alpha_i\vert\alpha_i)_c$ for $i\in I$.

Now for each $\G$ of type $\mf{x}\in\{\mf{a,b,c,d}\}$, let
$\{\,s^\mf{x}_i\,\}_{i\in I}$ be the sequence defined by
\begin{equation*}
s^{\mf a}_i=s^{\mf d}_i=1, \ \ i\in I,\quad s^{\mf b}_i=
\begin{cases}
\hf, & \text{if $i=0$}, \\
1, & \text{if $i\geq 1$},
\end{cases} \quad
s^{\mf c}_i=
\begin{cases}
2, & \text{if $i=0$}, \\
1, & \text{if $i\geq 1$}.
\end{cases}
\end{equation*}
Then it follows that
\begin{equation}\label{aux:casimir1}
(\la\vert\alpha_i)_c=s^{\mf{x}}_i\langle\la,\alpha^{\vee}_i\rangle,
\end{equation}
for $\la\in{\h}^*$, $i\in I$ so that
$(\alpha_i\vert\alpha_j)_c=s^\mf{x}_j\langle\alpha_i,{\alpha}^{\vee}_j\rangle$,
for $i,j\in I$. By defining $({\alpha}^{\vee}_i\vert
{\alpha}^{\vee}_j)_c:=(s_i^\mf{x}s_j^\mf{x})^{-1}(\alpha_i\vert\alpha_j)_c$,
we obtain a symmetric bilinear form on the Cartan subalgebra of
$\G'=[\G,\G]$, which can be extended to a non-degenerate invariant
symmetric bilinear form on $\G'$ such that
\begin{equation}\label{aux:casimir2}
(e_i\vert f_j)_c=\delta_{ij}/s^{\mf{x}}_i,
\end{equation}
where $e_i$ and $f_j$ $(i,j\in I)$ denote the Chevalley generators
of $\G'$ with $[e_i,f_i]=\alpha_i^{\vee}$. Thus the symmetric
bilinear form on $\h^*$ induces an invariant symmetric
non-degenerate bilinear form on the derived subalgebra of $\G$.
Since every root space is one-dimensional, we can choose a basis
$\{u_{\alpha}\}$ of $\G_\alpha$ for $\alpha\in\Delta^+$ and a dual
basis $\{u^{\alpha}\}$ of $\G_{-\alpha}$ with respect to
$(\cdot\vert\cdot)_c$.

Let $V$ be a highest weight $\G$-module with weight space
decomposition $V=\oplus_{\mu} V_\mu$. Define $\Gamma_1:V\rightarrow
V$ to be the linear map that acts as the scalar
$(\mu+2\rho_c\vert\mu)_c$ on $V_\mu$. Let
$\Gamma_2:=2\sum_{\alpha\in\Delta^+}u^{\alpha}u_{\alpha}$. Define
the Casimir operator (cf.~\cite{J}) to be
\begin{equation*}
\Omega:=\Gamma_1+\Gamma_2.
\end{equation*}
It follows from \eqnref{aux:casimir1} and \eqnref{aux:casimir2} that
$\Omega$ commutes with the action of $\G$ on $V$
(cf.~\cite[Proposition 3.6]{J}). Thus, if $V$ is generated by a
highest weight vector with highest weight $\la$, then $\Omega$ acts
on $V$ as the scalar $(\la+2\rho_c\vert\la)_c$.

\subsection{The bilinear form $(\cdot|\cdot)_s$ and the Casimir operator $\overline{\Omega}$ of
$\SG$}\label{rhos:aux}

Suppose first that $\SG=\hgl_{\dinfty}$. Define $\SLa^{\mf
a}_0\in\bar{\h}^*$ by $\langle\SLa^{\mf a}_0,K \rangle=1$ and
$\langle\SLa^{\mf a}_0,E_{rr}\rangle=0$, for all $r\in\hf\Z$. We
choose a symmetric bilinear form $(\cdot\vert\cdot)_s$ on
$\ov{\h}^*$ satisfying
\begin{align*}
&(\la\vert \delta_r)_s=(-1)^{2r}\langle\la,E_{rr}-(-1)^{2r}{\zeta}(r)\frac{K}{2}\rangle, \quad \la\in\ov{\h}^*, r\in\hf\Z,\\
&(\SLa^{\mf a}_0\vert\SLa^{\mf a}_0)_s=0.
\end{align*}
Here we set that $\rho_s=0\in \ov{\h}^*$. We check easily that
\begin{align*}
&(\delta_r\vert\delta_t)_s=(-1)^{2r}\delta_{rt},\quad (\SLa^{\mf
a}_0\vert\delta_r)_s=-\frac{\zeta(r)}{2}, \quad r,t\in\hf\Z,\\
&(\rho_s\vert\beta_r)_s=\frac{1}{2}(\beta_r\vert\beta_r)_s, \quad
r\in \overline{I}.
\end{align*}

Suppose that $\SG=\widehat{\mc{X}}$ for $\mc{X}\in\{\mc{B,C,D}\}$,
with $\mf{x}\in\{\mf{b,c,d}\}$ denoting the respective type of
$\SG$. Define $\SLa^\mf{x}_0\in \overline{\mf{h}}^*$ by
$\langle\SLa^\mf{x}_0,K\rangle=1$ and
$\langle\SLa^\mf{x}_0,\widetilde{E}_r\rangle=0$ for $r\in \hf\N$.
For $\widehat{\mc{C}}$, we put $\rho_s=0$. For the other cases, let
$\rho_s\in\overline{\h}^*$ be determined by
\begin{equation*}
\begin{aligned}
\langle \rho_s,\widetilde{E}_{r}\rangle&=
\begin{cases}
(-1)^{2r}\frac{1}{2}, & \text{for $\widehat{\mc{B}}$}, \\
(-1)^{2r}, & \text{for $\widehat{\mc{D}}$},
\end{cases}\quad r\in\hf\N, \\
\langle \rho_s,K\rangle &=0.
\end{aligned}
\end{equation*}
We fix a symmetric bilinear form $(\cdot\vert\cdot)_s$ on
$\overline{\h}^*$ satisfying
\begin{align*}
&(\la\vert \delta_r)_s=(-1)^{2r}\langle
\la,\widetilde{E}_{r}-(-1)^{2r}K\rangle, \quad \la\in\ov{\h}^*,r\in\hf\N,\\
&(\SLa^\mf{x}_0\vert\SLa^\mf{x}_0)_s=(\SLa^\mf{x}_0\vert\rho_s)_s=0.
\end{align*}
Then we can check that
$(\delta_r\vert\delta_t)_s=(-1)^{2r}\delta_{rt}$,
$(\SLa^\mf{x}_0\vert\delta_r)_s=-1$ for $r,t\in \hf\N$, and
$2(\rho_s\vert\beta_r)_s=(\beta_r\vert\beta_r)_s$ for $r\in
\overline{I}$.

Now, for each $\SG$ of type $\mf{x}\in\{\mf{a,b,c,d}\}$, let
$\{\,\overline{s}^\mf{x}_r\,\}_{r\in \overline{I}}$ be the sequence
defined by
\begin{align*}
&\overline{s}^{\mf a}_r=\overline{s}^{\mf b}_r=\overline{s}^{\mf
c}_r =(-1)^{2r}, \ \ r\in \overline{I},\\ &\overline{s}^{\mf d}_r=
\begin{cases}
2, & \text{if $r=0$}, \\
(-1)^{2r}, & \text{if $r\geq \hf$}.
\end{cases}
\end{align*}
Then we have
\begin{equation}\label{aux:scasimir1}
(\la\vert\beta_r)_s=\overline{s}^\mf{x}_r\langle\la,\beta^{\vee}_r\rangle,
\quad \la\in\ov{\h}^*, r\in\overline{I},
\end{equation}
so that
$(\beta_r\vert\beta_t)_s=\overline{s}^\mf{x}_t\langle\beta_r,{\beta}^{\vee}_t\rangle$
for $r,t\in\overline{I}$. By defining
$({\beta}^{\vee}_r\vert{\beta}^{\vee}_t)_s:=(\ov{s}^\mf{x}_r\ov{s}_t^\mf{x})^{-1}(\beta_r\vert\beta_t)_s$,
we obtain a symmetric bilinear form on the Cartan subalgebra of
$\SG'=[\SG,\SG]$, which can be extended to a non-degenerate
invariant super-symmetric bilinear form on $\SG'$ such that
\begin{equation}\label{aux:scasimir2}
(\ov{e}_r\vert \ov{f}_t)_s=\delta_{rt}/\overline{s}^\mf{x}_r,
\end{equation}
where $\ov{e}_r$ and $\ov{f}_t$ ($r,t\in\overline{I}$) denote the
Chevalley generators of $\SG$ with
$[\ov{e}_r,\ov{f}_r]=\beta^{\vee}_r$.

We have now all the ingredients to define the super-analogue of the
Casimir operator. Namely, for $\beta\in\ov{\Delta}^+$, let
$\SG_\beta$ be the root space of $\SG$ corresponding to $\beta$.
Take a basis $\{u_{\beta}\}$ of $\SG_\beta$, and a dual basis
$\{u^{\beta}\}$ of $\SG_{-\beta}$ with respect to
$(\cdot\vert\cdot)_s$. For any highest weight $\SG$-module $V$, with
weight space decomposition $V=\oplus_{\mu} V_\mu$, we define
$\overline{\Gamma}_1:V\rightarrow V$ to be the linear map that acts
as the scalar $(\mu+2\rho_s\vert\mu)_s$ on $V_\mu$. Let
$\overline{\Gamma}_2:=2\sum_{\beta\in\overline{\Delta}^+}u^{\beta}u_{\beta}$.
For example, if $\SG=\hgl_{\dinfty}$, then we have for $r\in \ov{I}$
\begin{equation*}
u^{\beta_r}u_{\beta_r}=(-1)^{2r}E_{r+\hf,r}E_{r,r+\hf}.
\end{equation*}
Define the Casimir operator to be
\begin{equation}\label{supercasimir}
\overline{\Omega}:=\overline{\Gamma}_1+\overline{\Gamma}_2.
\end{equation}

\begin{prop} Let $V$ be a highest weight $\SG$-module.
The operator $\overline{\Omega}$ commutes with the action of $\SG$
on $V$. In particular, if $V$ is generated by a highest weight
vector with highest weight $\la$, then $\overline{\Omega}$ acts on
$V$ as the scalar $(\la+2\rho_s\vert\la)_s$.
\end{prop}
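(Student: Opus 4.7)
The strategy is the standard one for Casimir operators on highest weight modules: show that $\ov{\Omega}$ is a well-defined $U(\SG)$-intertwiner of $V$ to itself, then evaluate it on the highest weight vector $v_\la$. Granting commutativity, the ``in particular'' statement is immediate: each $u_\beta$ with $\beta\in\ov{\Delta}^+$ annihilates $v_\la$, so $\ov{\Gamma}_2 v_\la=0$, while $\ov{\Gamma}_1 v_\la=(\la+2\rho_s\vert\la)_s v_\la$ by definition of $\ov{\Gamma}_1$. Since $V=U(\SG)v_\la$, the intertwining property forces $\ov{\Omega}$ to act by the same scalar on all of $V$.

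Next, I would justify that $\ov{\Omega}$ makes sense as an operator on $V$. The operator $\ov{\Gamma}_1$ is diagonal on weight spaces, so it is clearly defined. For $\ov{\Gamma}_2=2\sum_{\beta\in\ov{\Delta}^+}u^\beta u_\beta$, given any $v\in V_\mu$, I note that $u_\beta v\in V_{\mu+\beta}$, and because $V$ is a highest weight module with highest weight $\la$, the weight $\mu+\beta$ can appear only if $\la-\mu-\beta$ is a $\Z_+$-linear combination of simple roots. Only finitely many $\beta\in\ov{\Delta}^+$ satisfy this, so the sum $\ov{\Gamma}_2 v$ is a finite sum and $\ov{\Omega}$ is well-defined on $V$.

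The core step is to verify $[\ov{\Omega},x]=0$ for all $x\in\SG$. Commutativity with the Cartan $\ov{\h}$ is obvious since $\ov{\Gamma}_1$ and $\ov{\Gamma}_2$ preserve weight spaces. It therefore suffices to check commutativity with the Chevalley generators $\ov{e}_r,\ov{f}_r$ ($r\in\ov{I}$), since $\SG$ acts on $V$ via the enveloping algebra and every root vector can be bracketed down to these generators. I would compute $[\ov{\Gamma}_2,\ov{e}_r]$ by applying the graded Leibniz rule in $\beta$-by-$\beta$ fashion, using that $[u_\beta,\ov{e}_r]$ (a super-commutator) lies in the root space $\SG_{\beta+\beta_r}$, and similarly for $[u^\beta,\ov{e}_r]\in\SG_{-\beta+\beta_r}$. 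The standard reorganization (transporting terms via the duality of the bases $\{u_\beta\}$ and $\{u^\beta\}$ with respect to $(\cdot\vert\cdot)_s$, together with the invariance \eqnref{aux:scasimir1}, \eqnref{aux:scasimir2} of the form) converts the resulting expression into $[\ov{h},\ov{e}_r]$-terms, where $\ov{h}\in\ov{\h}$ corresponds via the form to the $\beta_r$-direction. This precisely cancels the contribution from $[\ov{\Gamma}_1,\ov{e}_r]$, because $\ov{\Gamma}_1$ is built out of the polynomial $(\mu+2\rho_s\vert\mu)_s$ whose discrete derivative in the $\beta_r$-direction reproduces $2\langle\mu+\rho_s,\beta_r^\vee\rangle\ov{s}_r^\mf{x}$, which is the required counterterm. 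The argument for $\ov{f}_r$ is symmetric.

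I expect the main obstacle to be the bookkeeping of signs in the super setting together with the handling of isotropic simple roots (those with $(\beta_r\vert\beta_r)_s=0$), for which the Chevalley relation $[\ov{e}_r,\ov{f}_r]=\beta_r^\vee$ is still correct but the usual $\mf{sl}_2$-triple intuition fails. One must check that the vanishing $\ov{s}_r^\mf{x}\langle\ov{e}_r\vert\ov{f}_r\rangle_s=1$ is used consistently, and that the pairing of $\ov{\Gamma}_2$ against $\ov{e}_r$ yields a sign $(-1)^{2r}$ matching $\ov{s}_r^\mf{x}$ exactly. Once these signs are tracked, the computation is a direct super-analogue of \cite[Proposition~3.6]{J}, and the argument is self-contained thanks to the explicit choice of $(\cdot\vert\cdot)_s$ and $\rho_s$ in \secref{rhos:aux}.
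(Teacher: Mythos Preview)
Your proposal is correct and follows essentially the same approach as the paper: both argue that the proof of \cite[Proposition~3.6]{J} carries over verbatim to the super setting once the identities \eqnref{aux:scasimir1} and \eqnref{aux:scasimir2} are substituted for their classical counterparts. Your write-up simply unpacks in more detail what that argument involves (well-definedness on highest weight modules, reduction to Chevalley generators, and the cancellation between $[\ov{\Gamma}_1,\ov{e}_r]$ and $[\ov{\Gamma}_2,\ov{e}_r]$), which is exactly what the paper leaves to the reader.
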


\begin{proof}
The argument is parallel to the one given to prove \cite[Proposition
3.6]{J}. Here we use \eqnref{aux:scasimir1} and
\eqnref{aux:scasimir2} to replace the corresponding identities of
\cite{J}.
\end{proof}

\subsection{$\ov{\U}_-$-homology groups of $\SG$-modules}

Suppose that $(\G,G)$ and $(\SG,G)$ are the dual pairs of type
$\mf{x}\in\{\mf{a,b,c,d}\}$ given in \secref{classical:dualpairs}
and \secref{super:dualpairs}, and $\la\in\mc{P}(G)$. Recall the
following results for integrable modules of generalized Kac-Moody
algebras \cite{J, L} applied to our setting.

\begin{prop}\label{eigenvalue:classical} \mbox{}
\begin{itemize}
\item[(i)]
If $\eta\in\mathcal P^+_{\mathfrak l}$ is a weight in
$\Lambda^k\mathfrak u_-\otimes L(\G,\La^\mf{x}(\la))$ with
$(\eta+2\rho_c\vert\eta)_c=(\La^\mf{x}(\la)+2\rho_c\vert\La^\mf{x}(\la))_c$,
then there exists $w\in W^0_k$ with $\eta=w\circ\La^\mf{x}(\la)$ and
$\eta$ appears with multiplicity one.
\item[(ii)] The $\mathfrak l$-module ${\rm
H}_k({\mathfrak u}_-;L(\G,\La^\mf{x}(\la)))$ is completely
reducible. Furthermore if $L({\mathfrak l},\eta)$ is an irreducible
component of ${\rm H}_k({\mathfrak u}_-;L(\G,\La^\mf{x}(\la)))$,
then
$(\eta+2\rho_c\vert\eta)_c=(\La^\mf{x}(\la)+2\rho_c\vert\La^\mf{x}(\la))_c$.
\end{itemize}
\end{prop}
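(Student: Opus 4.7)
The plan is to follow the standard Kostant--Garland--Lepowsky--Jakobsen framework \cite{J}, which was originally developed for integrable highest weight modules over (generalized) Kac--Moody algebras. Since $\G$ is a generalized Kac--Moody algebra and each $L(\G,\La^{\mf{x}}(\la))$ is integrable, the Casimir operator $\Omega$ constructed in \secref{biformc} is the only technical ingredient needed. Both parts (i) and (ii) are manifestations of a single principle: $\Omega$ takes its maximal eigenvalue $(\La^{\mf{x}}(\la)+2\rho_c\vert\La^{\mf{x}}(\la))_c$ on the homology, and this eigenvalue constraint pins down the weights to be precisely the $w\circ\La^{\mf{x}}(\la)$ with $w\in W^0_k$.

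For part (i), I would write $\eta=\La^{\mf{x}}(\la)-\tau$ for some $\tau$ in the positive root lattice $\sum_i\Z_+\alpha_i$, and expand
\begin{equation*}
(\La^{\mf{x}}(\la)+2\rho_c\vert\La^{\mf{x}}(\la))_c-(\eta+2\rho_c\vert\eta)_c = 2(\La^{\mf{x}}(\la)+\rho_c\vert\tau)_c-(\tau\vert\tau)_c.
\end{equation*}
The hypothesis forces the right side to vanish, which via the dominance of $\La^{\mf{x}}(\la)$ (with respect to $(\cdot\vert\cdot)_c$) and the fact that $\tau$ arises as a sum of $k$ roots from $\Delta^+(S)$ plus a weight shift of $L(\G,\La^{\mf{x}}(\la))$ allows one to identify $\eta+\rho_c$ as a $W$-translate of $\La^{\mf{x}}(\la)+\rho_c$. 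The condition $\eta\in\mathcal{P}^+_{\mathfrak{l}}$ then forces the representative $w$ to lie in $W^0$, and the chain-degree constraint forces $l(w)=k$, i.e.\ $w\in W^0_k$. Uniqueness of this expression (together with the fact that $w\circ\La^{\mf{x}}(\la)$ uniquely determines $w$ within $W^0$) gives multiplicity one.

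For part (ii), the key step is to promote $\Omega$ to an operator on the chain complex $C_k(\U_-;L(\G,\La^{\mf{x}}(\la)))=\Lambda^k\U_-\otimes L(\G,\La^{\mf{x}}(\la))$ that commutes with the differential $d_k$. This is the standard Kostant Laplacian construction: introduce the contravariant form on chains and form $\square:=d\,d^*+d^*d$; then Kostant's identity expresses $\square$ on the $\eta$-weight space as the scalar $(\La^{\mf{x}}(\la)+2\rho_c\vert\La^{\mf{x}}(\la))_c-(\eta+2\rho_c\vert\eta)_c$. Hodge decomposition then identifies $\mathrm{H}_k(\U_-;L(\G,\La^{\mf{x}}(\la)))$ with $\ker\square$, which by part (i) is concentrated on the weights $w\circ\La^{\mf{x}}(\la)$, $w\in W^0_k$, each of which is $\mathfrak{l}$-dominant integral; complete reducibility of $\mathrm{H}_k$ as an $\mathfrak{l}$-module follows because these weights are highest weights of integrable $\mathfrak{l}$-modules and appear with multiplicity one.

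The main obstacle is not conceptual but rather bookkeeping: $\U_-$ and $\Lambda^k\U_-$ are infinite-dimensional, so the Hodge-type argument must be carried out weight-by-weight, and one needs local finiteness of weight multiplicities in $\Lambda^k\U_-\otimes L(\G,\La^{\mf{x}}(\la))$ to make sense of the eigenspace decomposition of $\Omega$. This local finiteness is precisely what integrability of $L(\G,\La^{\mf{x}}(\la))$ together with the generalized Kac--Moody structure of $\G$ guarantees, which is why the whole setup of \cite{J} transfers verbatim. The classical subalgebras $\mathfrak{b}_\infty,\mathfrak{c}_\infty,\mathfrak{d}_\infty$ present no additional difficulty once one has verified (using the sequences $s^{\mf{x}}_i$ from \secref{biformc}) that the Casimir $\Omega$ coming from $(\cdot\vert\cdot)_c$ is an invariant operator of the required form.
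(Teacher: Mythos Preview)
Your proposal is correct and follows precisely the Garland--Lepowsky--Liu--Jurisich framework of \cite{GL,L,J}, which is exactly what the paper invokes: the proposition is not proved in the paper but is stated as a recollection of known results for integrable modules over generalized Kac--Moody algebras, with a citation to \cite{J,L}. Your sketch of the Casimir/Laplacian argument is an accurate summary of how those references establish the statement.

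One small correction in part (ii): the complete reducibility of $\mathrm{H}_k$ over $\mathfrak{l}$ does not follow from the multiplicity-one property of the harmonic weights, but rather from the fact that $L(\G,\La^{\mf{x}}(\la))$ is integrable (hence completely reducible) as an $\mathfrak{l}$-module and that $\Lambda^k\mathfrak{u}_-$ is a direct sum of integrable $\mathfrak{l}$-modules via the adjoint action; thus the chain space $C_k$ is already completely reducible over $\mathfrak{l}$, and so is any subquotient. With this adjustment, your outline matches the cited literature.
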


Let $\ov{\Delta}:=\ov{\Delta}^+\cup \ov{\Delta}^{\,-}$ be the set of
roots of $\SG$, where $\ov{\Delta}^{\,-}=-\ov{\Delta}^+$. Let
$\ov{\Delta}_S^\pm:=\ov{\Delta}^\pm\cap(\sum_{r\neq 0}\Z\beta_r)$
and $\ov{\Delta}^\pm(S):=\ov{\Delta}^\pm\setminus\ov{\Delta}^\pm_S$.
Let
\begin{equation}\label{superparabolic}
\begin{aligned}
&\ov{\mf u}_{\pm} := \sum_{\beta\in \ov{\Delta}^\pm(S)}\SG_{\beta},
\quad \ov{\mf l}  := \sum_{\beta\in
\ov{\Delta}_S^+\cup\ov{\Delta}_S^{-}}\SG_{\beta}\oplus\ov{\h},\quad\ov{\mf{p}}:=\ov{\mf
l}\oplus\ov{\mf u}_+.
\end{aligned}
\end{equation}
Then we have
$\SG=\ov{\mf{u}}_+\oplus\ov{\mf{l}}\oplus\ov{\mf{u}}_-$. The Lie
superalgebras $\ov{\mf{l}}$ and $\SG$ share the same Cartan
subalgebra. It is not difficult to see that $\ov{\mf{l}}=\ov{\gl}_{>
0}\oplus \ov{\gl}_{\le 0}\oplus\C K$ if $\SG=\hgl_{\dinfty}$, and
$\ov{\mf{l}}\cong \ov{\gl}_{> 0}\oplus\C K$ otherwise.

For $\mu\in\ov{\h}^*$ we denote by ${L}(\ov{\mf l},\mu)$ the
irreducible highest weight representation of $\ov{\mf l}$ with
highest weight $\mu$. For $\SG=\hgl_{\dinfty}$, let $\mathcal
P^+_{\ov{\mf l}}$ be the set of $\mu\in\ov{\h}^*$ such that
$\mu=\sum_{r\in\hf\Z}\mu_r\delta_r+c\SLa^\mf{x}_0$ for some
$c\in\mathbb{C}$ and $\sum_{r\in\hf\N}\mu_r\delta_r$
(resp.~$\sum_{r\in-\hf\Z_+}\mu_r\delta_r$) is a highest weight for
an irreducible (resp.~irreducible dual) tensor representation of
$\overline{\gl}_{> 0}$ (resp. $\overline{\gl}_{\le 0}$). For
$\SG=\widehat{\mc X}$ with $\mc{X}\in\{\mc{B,C,D}\}$, let $\mathcal
P^+_{\ov{\mf l}}$ be the set of $\mu\in\ov{\h}^*$ such that
$\mu=\sum_{r\in\hf\N}\mu_r\delta_r+c\SLa^\mf{x}_0$ for some
$c\in\mathbb{C}$ and $\sum_{r\in\hf\N}\mu_r\delta_r$ is a highest
weight for an irreducible tensor representation of
$\overline{\gl}_{> 0}$.

Now consider the homology groups ${\rm H}_k(\overline{\mathfrak
u}_-;L(\SG,\SLa^\mf{x}(\la)))$, which are defined analogously (see
e.g.~\cite{Fu, KK}).

\begin{lem}\label{aux411}
The $\overline{\mathfrak l}$-module $L(\SG,\SLa^\mf{x}(\la))$ is
completely reducible.
\end{lem}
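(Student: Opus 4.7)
My plan is to use the super Howe duality of \secref{super:dualpairs} to realize $L(\SG,\SLa^\mf{x}(\la))$ as an $\sL$-direct summand of the relevant Fock space $\SF$, and then to deduce complete reducibility from the semisimplicity of the tensor category $\mc{O}^{++}_{\infty|\infty}$ established in \thmref{catnon:ss}.

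First, one of \propref{sduality-a}, \propref{sduality for b}, \propref{sduality for c}, or \propref{sduality for d} yields a decomposition $\SF\cong\bigoplus_\mu L(\SG,\SLa^\mf{x}(\mu))\otimes V_G^\mu$ as $\SG\times G$-modules (with $\SF$ replaced by the appropriate $\SF_0$-variant in types $\mf{c}$ and $\mf{d}$). Since $G$ acts only on the second tensor factor it commutes with $\sL\subseteq\SG$, so the $V_G^\la$-isotypic component exhibits $L(\SG,\SLa^\mf{x}(\la))$ as a direct summand of $\SF$ as an $\sL$-module. Complete reducibility is inherited by direct summands, so it suffices to prove that $\SF$ is completely reducible over $\sL$.

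Next, I would identify the $\sL$-action on $\SF$ with an action by (dual) tensor representations. For $\SG=\hgl_{\dinfty}$ we have $\sL=\ov{\gl}_{>0}\oplus\ov{\gl}_{\le 0}\oplus\C K$, and the factorization of $\SF$ into its positive-mode and non-positive-mode pieces intertwines with the decomposition $\sL=\ov{\gl}_{>0}\oplus\ov{\gl}_{\le 0}$ (modulo the central element), so that each piece carries a representation of $\gl(\infty|\infty)$ with respect to the non-standard Borel subalgebra of \secref{nonstandardborel}: a tensor representation on the positive-mode half, and a dual tensor representation on the non-positive-mode half. Each piece thus lies in the semisimple tensor category $\mc{O}^{++}_{\infty|\infty}$ (resp.~its dual version), and complete reducibility follows from \thmref{catnon:ss}. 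For $\SG\in\{\widehat{\mc{B}},\widehat{\mc{C}},\widehat{\mc{D}}\}$, we have $\sL\cong\ov{\gl}_{>0}\oplus\C K$, only the positive-mode half of $\SF$ is relevant, and the same argument gives the result directly.

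The main obstacle is the explicit verification that the restriction of $\SF$ to $\ov{\gl}_{>0}$ (and $\ov{\gl}_{\le 0}$ in type $\mf{a}$) really lands in $\mc{O}^{++}_{\infty|\infty}$: one needs to check that $\ov{\mf{h}}$-weight multiplicities in each isotypic component are finite and that every composition factor is an irreducible (dual) tensor module. This is where the identity \eqnref{scauchy} (together with its dual counterpart in the variables ${\bf x}^{-1}_{\le 0}$) does the work, by expressing the character of the positive-mode (resp.~non-positive-mode) factor of $\SF$ as a sum of hook-Schur functions indexed by partitions, which are precisely the characters of the irreducible tensor (resp.~dual tensor) representations in $\mc{O}^{++}_{\infty|\infty}$ described in \secref{nonstandardborel}.
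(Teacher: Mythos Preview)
Your proposal is correct and follows essentially the same route as the paper's proof: both realize $L(\SG,\SLa^\mf{x}(\la))$ as an $\sL$-direct summand of the Fock space via the Howe duality decomposition, use \eqnref{scauchy} (applied twice in type $\mf{a}$) to recognize the $\sL$-character of the Fock space as a finite-multiplicity sum of hook Schur functions, and then invoke \thmref{catnon:ss} to conclude complete reducibility. The paper carries out the character computation explicitly for $\SG=\hgl_{\dinfty}$ and omits the other types as analogous, exactly as you suggest.
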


\begin{proof}
We will only show this for $\SG=\hgl_{\dinfty}$. The other cases are
analogous and omitted.

Consider the Fock space $\SF^\ell$ with $\ov{\mf{l}}\times{\rm
GL}(\ell)$-character
\begin{equation}\label{auxfockchar}
\prod_{i=1}^\ell\frac{\prod_{n\in\N}(1+x_n z_i)
(1+x_{-n+1}^{-1}z_i^{-1})}{\prod_{r\in1/2+\Z_+}
(1-x_rz_i)(1-x_{-r}^{-1}z_i^{-1})},
\end{equation}
which is the left hand side of \eqnref{character1}. Using
\eqnref{scauchy} twice we see that \eqnref{auxfockchar} can be
written as
\begin{equation*}
\sum_{\la,\mu\in\mc{P}^+;l(\la),l(\mu)\le \ell}{HS}_{\la'}({\bf
x}_{>0}){HS}_{\mu}({\bf x}^{-1}_{\le 0})
s_{\la}(z_1,\cdots,z_\ell)s_\mu(z_1^{-1},\cdots,z_\ell^{-1}).
\end{equation*}
Thus \eqnref{auxfockchar} can be written as an infinite sum of
${HS}_{\la'}({\bf x}_{>0}){HS}_{\mu}({\bf x}^{-1}_{\le 0})$ such
that each summand has a finite multiplicity. Now \thmref{catnon:ss}
implies that $\SF^\ell$, as an $\ov{\mf{l}}$-module, is completely
reducible. Since $L(\SG,\SLa^{\mf{a}}(\la))$ is a direct summand of
a completely reducible module, it is also completely reducible.
\end{proof}

\begin{lem}\label{complete:reducibility}
The $\overline{\mathfrak l}$-module ${\rm H}_k(\overline{\mathfrak
u}_-;L(\SG,\SLa^\mf{x}(\la)))$ is completely reducible.
\end{lem}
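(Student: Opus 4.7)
The plan is to deduce complete reducibility of the homology from complete reducibility of the chain spaces, using that an appropriate category of $\sL$-modules is a semisimple tensor category.

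First I would identify the $\sL$-module structure of $\ov{\mf{u}}_-$. From the description of $\ov{\Pi}$ in \secref{ex:superalgebras}, the positive roots in $\ov{\Delta}^+(S)$ are precisely those involving $\beta_0$. In the case $\SG=\hgl_{\dinfty}$ one has $\ov{\mf{u}}_-=\sum_{r>0,\,s\le 0}\C E_{sr}$, which as an $\ov{\gl}_{>0}\oplus\ov{\gl}_{\le 0}$-module is the outer tensor product of the natural representation $\C^{>0}$ of $\ov{\gl}_{>0}$ with the dual natural representation $(\C^{\le 0})^*$ of $\ov{\gl}_{\le 0}$. For $\SG=\widehat{\mc{B}},\widehat{\mc{C}},\widehat{\mc{D}}$ an analogous (and simpler) description gives $\ov{\mf{u}}_-$ as a tensor/symmetric module for $\ov{\gl}_{>0}$. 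In all cases $\ov{\mf{u}}_-$ lies in the semisimple tensor category of tensor (and dual tensor) representations discussed in \secref{tensor:repn}.

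Next, I would pass from $\ov{\mf{u}}_-$ to its super-exterior powers. Since $\Lambda^k(\ov{\mf{u}}_-)$ is an $\sL$-submodule of $\ov{\mf{u}}_-^{\otimes k}$, \thmref{catnon:ss} and its classical-subalgebra analogue imply that each chain space
\begin{equation*}
C_k(\ov{\mf{u}}_-;L(\SG,\SLa^\mf{x}(\la)))=\Lambda^k(\ov{\mf{u}}_-)\otimes L(\SG,\SLa^\mf{x}(\la))
\end{equation*}
is completely reducible as an $\sL$-module, and its irreducible summands are tensor (or tensor-times-dual-tensor) representations of $\sL$ with finite multiplicities. Here I use \lemref{aux411} to handle the second tensor factor, and the fact that the relevant tensor category is closed under tensor products.

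Finally, the boundary maps $d_k$ commute with the $\sL$-action, so ${\rm H}_k(\ov{\mf{u}}_-;L(\SG,\SLa^\mf{x}(\la)))=\ker d_k/\operatorname{im}d_{k+1}$ is a subquotient of $C_k$. Subquotients of semisimple objects in a semisimple category are again semisimple, so the homology is completely reducible as an $\sL$-module. The one technical point that requires care is checking that $\Lambda^k(\ov{\mf{u}}_-)$, in its super-graded sense, genuinely sits inside the semisimple category of \thmref{catnon:ss} (and its classical analogues); I expect this to be the main obstacle, but once the outer-tensor-product decomposition of $\ov{\mf{u}}_-$ above is in hand it reduces to the observation that $\ov{\mf{u}}_-^{\otimes k}$ is a direct sum of tensor (resp.\ tensor-times-dual-tensor) modules with finite isotypic multiplicities, which is immediate from \thmref{cat:ss}.
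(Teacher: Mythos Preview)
Your proposal is correct and follows essentially the same approach as the paper: identify $\ov{\mf{u}}_-$ as a (dual) tensor module for $\ov{\mf{l}}$, use \lemref{aux411} and the semisimplicity of the tensor category (\thmref{catnon:ss}) to deduce complete reducibility of the chain spaces $\Lambda^k(\ov{\mf{u}}_-)\otimes L(\SG,\SLa^\mf{x}(\la))$, and then pass to the subquotient. One minor difference worth noting: for type $\mf{a}$ the paper establishes complete reducibility of $\Lambda^k(\ov{\mf{u}}_-)$ via a truncation argument combined with the skew-symmetric $(\gl,\gl)$ Howe duality of \cite[Theorem~3.3]{CW1}, whereas your route through the embedding $\Lambda^k(\ov{\mf{u}}_-)\subseteq\ov{\mf{u}}_-^{\otimes k}$ is a bit more elementary and avoids invoking Howe duality; for types $\mf{b,c,d}$ the arguments coincide (though you should be aware that $\ov{\mf{u}}_-$ is $\Lambda^2(\C^{>0})\oplus\C^{>0}$, $S^2(\C^{>0})$, $\Lambda^2(\C^{>0})$ respectively, not merely ``a tensor/symmetric module'').
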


\begin{proof}
First suppose that $\SG=\hgl_{\dinfty}$ with
$\ov{\mf{l}}=\overline{\gl}_{\le 0}\oplus\overline{\gl}_{>0}\oplus\C
K$. As a $\overline{\gl}_{\le 0}\oplus\overline{\gl}_{>0}$-module,
$\bar{\mathfrak u}_-\cong\C^{\le 0 *}\otimes\C^{>0}$. We claim that
$\La^k(\bar{\mathfrak u}_-)$ is completely reducible, and each
irreducible component is an irreducible dual tensor representation
of $\overline{\gl}_{\le 0}$ tensored with an irreducible tensor
representation of $\overline{\gl}_{> 0}$. This can be proved, for
example, by using an analogous truncation functor argument as in
\secref{standardborel} in combination with the skew-symmetric Howe
duality for a pair of finite-dimensional general Lie superalgebras
\cite[Theorem 3.3]{CW1}. By \lemref{aux411}
$L(\hgl_{\infty\vert\infty},\SLa^{\mf a}(\la))$ as an
$\overline{\mathfrak l}$-module is completely reducible, and hence
by \thmref{catnon:ss} $\Lambda^k(\bar{\mathfrak u}_-)\otimes
L(\hgl_{\infty\vert\infty},\SLa^{\mf a}(\la))$ is a completely
reducible $\bar{\mathfrak l}$-module.  Since any subquotient of a
completely reducible module is also completely reducible, the result
follows.

Now suppose that $\SG=\widehat{\mc B},\widehat{\mc C},\widehat{\mc
D}$. Then $\ov{\mf{l}}$ is isomorphic to $\ov{\gl}_{>0}\oplus\C K$
and as a $\ov{\gl}_{>0}$-module,
\begin{align}\label{u-}
\ov{\mf u}_-\cong
\begin{cases}
\La^2(\C^{>0})\oplus \C^{>0}, & \text{if $\SG=\widehat{\mc
B}$},\\
S^2(\C^{>0}), & \text{if $\SG=\widehat{\mc
C}$},\\
\La^2(\C^{>0}), & \text{if $\SG=\widehat{\mc D}$},
\end{cases}
\end{align}
which is a direct sum of irreducible tensor representations. By
\thmref{catnon:ss} $\La^k(\ov{\U}_-)$ is completely reducible over
$\ov{\mf{l}}$. This together with \lemref{aux411} shows that
$\Lambda^k(\bar{\mathfrak u}_-)\otimes L(\ov{\G},\SLa^\mf{x}(\la))$,
and hence ${\rm H}_k(\overline{\mathfrak
u}_-;L(\SG,\SLa^\mf{x}(\la)))$, is completely reducible.
\end{proof}

\begin{rem}
Alternatively, Lemmas \ref{aux411} and \ref{complete:reducibility}
also follow from the unitarity of the respective
$\ov{\mf{l}}$-modules.
\end{rem}

We have the following super-analogue of the action of the Casimir
operator on homology groups (cf.~\cite{GL, L, J}).

\begin{prop}\label{eigenvalue:super}
Let $\gamma\in\mathcal P^+_{\bar{\mathfrak l}}$ and let
$L(\bar{\mathfrak l},\gamma)$ be the irreducible $\bar{\mathfrak
l}$-module of highest weight $\gamma$.  If $L(\bar{\mathfrak
l},\gamma)$ is a component of ${\rm H}_k(\overline{\mathfrak
u}_-;L(\SG,\SLa^\mf{x}(\la)))$, then
$(\gamma+2\rho_s\vert\gamma)_s=(\SLa^\mf{x}(\la)+2\rho_s\vert\SLa^\mf{x}(\la))_s$.
\end{prop}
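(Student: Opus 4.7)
The plan is to transport the proof of the classical analogue \propref{eigenvalue:classical}(ii) (cf.~\cite{J,L}) to the super setting by replacing the classical Casimir with the super-Casimir $\overline{\Omega}$ of \secref{rhos:aux}. The key input is that $\overline{\Omega}$ acts on $L(\SG,\SLa^\mf{x}(\la))$ as the scalar $c_\la := (\SLa^\mf{x}(\la)+2\rho_s\vert\SLa^\mf{x}(\la))_s$, together with the complete reducibility of the homology as an $\ov{\mf{l}}$-module established in \lemref{complete:reducibility}.

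First I would lift $\overline{\Omega}$ to an operator $\mathcal{K}$ on the chain spaces $C_k=\Lambda^k(\ov{\U}_-)\otimes L(\SG,\SLa^\mf{x}(\la))$ by exploiting the decomposition
$$\overline{\Omega}\;=\;\overline{\Gamma}_1\;+\;2\!\!\sum_{\beta\in\ov{\Delta}^+_S}\!u^\beta u_\beta\;+\;2\!\!\sum_{\beta\in\ov{\Delta}^+(S)}\!u^\beta u_\beta,$$
where the first two summands act diagonally via the $\ov{\mf l}$-module structure on $C_k$ while the third sum, whose factors lie in $\ov{\U}_\pm$, is made to act on both tensor slots in the manner of Kostant's original construction. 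Super-signs are read off from the $\Z_2$-grading and the supertrace convention of \eqnref{cocycle-super}.

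Next I would verify the identity $[\mathcal{K},d_k]=0$, where $d_k$ is the boundary of \eqnref{chains}. This is the main technical step and proceeds by the same bracket computation as in \cite{J} (or Proposition~3 of \cite{L}), using the invariance of $(\cdot\vert\cdot)_s$ recorded in \eqnref{aux:scasimir2} together with the defining relations of $\overline{\Omega}$; the novelty is bookkeeping of super-signs in the Koszul differential on $\Lambda^\bullet(\ov{\U}_-)$. Once commutation is in hand, $\mathcal{K}$ descends to an endomorphism of ${\rm H}_k(\ov{\U}_-;L(\SG,\SLa^\mf{x}(\la)))$. On $C_0$ it acts as $c_\la$, and since passing to homology preserves this action, the induced operator on $H_k$ acts as $c_\la$ everywhere. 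On the other hand, on each isotypic component $L(\ov{\mf l},\gamma)$ of $H_k$, \lemref{complete:reducibility} and Schur's lemma force $\mathcal{K}$ to act as a scalar; the explicit form of $\mathcal{K}$ shows this scalar is $(\gamma+2\rho_s\vert\gamma)_s$, with the summand over $\ov{\Delta}^+(S)$ providing precisely the correction that converts the $\ov{\mf l}$-Weyl vector into $\rho_s$. Comparing the two evaluations yields the desired equality.

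The principal obstacle is the commutation check $[\mathcal{K},d_k]=0$: while it is a direct super-analogue of the classical Garland-Lepowsky/Liu computation, the $\Z_2$-grading introduces sign subtleties (in particular in pairing root vectors from $\ov{\Delta}^+(S)$ with their duals, which may be odd) that must be tracked carefully through the super-exterior differential. Everything else reduces, via \lemref{complete:reducibility} and the explicit formula for $\overline{\Omega}$, to the scalar eigenvalue comparison sketched above.
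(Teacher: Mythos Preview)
Your approach is essentially the one the paper takes: both defer to the classical argument of \cite[Proposition~18]{L} (cf.~\cite{GL,J}), transported to the super setting with $\overline{\Omega}$, $(\cdot\vert\cdot)_s$, and $\rho_s$ in place of their classical counterparts. The paper's only additional remark is that the Casimir for $\ov{\mf l}$ used in the comparison must be built from the \emph{same} form $(\cdot\vert\cdot)_s$ and the \emph{same} $\rho_s$ as in \eqnref{supercasimir}---which is precisely your observation that the $\ov{\Delta}^+(S)$-summand supplies ``the correction that converts the $\ov{\mf l}$-Weyl vector into $\rho_s$.''

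One step in your sketch needs repair. From ``$\mathcal{K}$ acts as $c_\la$ on $C_0$'' together with $[\mathcal{K},d]=0$ you cannot conclude that $\mathcal{K}$ acts as $c_\la$ on $H_k$ for $k>0$: an operator commuting with the zero differential can act by different scalars in different degrees. What the argument in \cite{L,GL} actually proves is that the lifted operator (equivalently, the Kostant Laplacian $d\partial+\partial d$) acts as the scalar $c_\la$ on \emph{every} $C_k$, via a direct computation using invariance of the form and the contribution of the $\ov{\Delta}^+(S)$-terms. That computation is the same one you invoke for your second evaluation of $\mathcal{K}$; it simply must be cited for the first evaluation as well, rather than appealing to $C_0$.
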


\begin{proof}
This can be proved following the same type of arguments as the one
given in the proof of \cite[Proposition 18]{L} and thus we will omit
the details. We only remark that in the process we use the same
bilinear form $(\cdot\vert\cdot)_s$ and the same $\rho_s$ to define
the corresponding Casimir operator for $\ov{\mf{l}}$ as in
\eqnref{supercasimir}.
\end{proof}

\section{Computation of $\overline{\mathfrak u}_-$-homology
groups}\label{sec:homology}

\subsection{Comparison of Casimir
eigenvalues}\label{comp:eigenvalues}

Let $\SP^1$ denote the set of sequences
$a=(a_\hf,a_1,a_{\frac{3}{2}},\cdots)$, where
$(a_{\hf},a_{\frac{3}{2}},\cdots)$ and $(a_1,a_2,\cdots)$ are strict
partitions, such that $a_s=0$ implies that $a_{s+\hf}=0$, for all
$s$. We can define a map $\theta_1:\mathcal P^+\rightarrow\SP^1$ as
follows. For $\la=(\la_1,\la_2,\cdots )\in\mathcal P^+$, let
\begin{equation*}
\theta_1(\la)=(\langle\la'_1\rangle,\langle\la_1-1\rangle,
\langle\la'_2-1\rangle,\langle\la_2-2\rangle,\langle\la'_3-2\rangle,\langle\la_3-3\rangle,
\cdots).
\end{equation*}
It is easy to see that $\theta_1(\la)\in\SP^1$ and $\theta_1$ is a
bijection. We have the following combinatorial lemma that is
equivalent to \cite[1.7]{M}, where an elegant proof can be found.

\begin{lem}\label{aux111} Let $i=1,\cdots,N$ with $N\ge \la'_1$.
The set $\{\la'_i-i+1\,\vert\, \la'_i-i+1>0\}\cup\{-\la_i+i
\,\vert\, \la_i-i<0\}$ is a permutation of the set
$\{1,2,\cdots,N\}$.
\end{lem}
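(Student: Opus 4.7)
The plan is to partition the index set $\{1,\ldots,N\}$ using the Durfee square of $\lambda$. Let $d := \max\{i \ge 1 : \lambda_i \ge i\}$ be its size. Since $\lambda_i \ge i \iff \lambda'_i \ge i \iff (i,i) \in \lambda$, the two conditions $\lambda'_i - i + 1 > 0$ and $\lambda_i - i < 0$ are complementary for each $i \in \{1,\ldots,N\}$. Hence each index contributes exactly one value to the set under consideration: the value $\lambda'_i - i + 1$ when $i \le d$, and the value $i - \lambda_i$ when $d < i \le N$. This already produces $N$ contributions in total.

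Next I would check that all contributions are positive integers lying in $\{1,\ldots,N\}$. For $i \le d$ we have $1 \le \lambda'_i - i + 1 \le \lambda'_1 \le N$, and for $i > d$ we have $1 \le i - \lambda_i \le N - \lambda_N \le N$. Distinctness within each of the two groups is immediate from monotonicity: $i \mapsto \lambda'_i - i + 1$ is strictly decreasing on $\{1,\ldots,d\}$ (since $\lambda'_i$ is weakly decreasing while $-i$ strictly decreases), and $i \mapsto i - \lambda_i$ is strictly increasing on $\{d+1,\ldots,N\}$.

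The heart of the argument will be disjointness between the two groups. Assume for contradiction that $\lambda'_i - i + 1 = j - \lambda_j$ with $i \le d < j$. I would split into two sub-cases according to the relative position of $j$ and $\lambda'_i$. If $j \le \lambda'_i$, then since rows $1,\ldots,\lambda'_i$ all meet column $i$, we have $\lambda_j \ge i$, giving $\lambda'_i - i + 1 = j - \lambda_j \le j - i$, hence $\lambda'_i \le j - 1$, contradicting $j \le \lambda'_i$. If instead $j > \lambda'_i$, then row $\lambda'_i + 1$ has fewer than $i$ cells, so $\lambda_j < i$, giving $\lambda'_i - i + 1 = j - \lambda_j \ge j - i + 1$, hence $\lambda'_i \ge j$, again a contradiction.

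Once cross-disjointness is established, we have $d + (N - d) = N$ pairwise distinct positive integers contained in the set $\{1,2,\ldots,N\}$ of size $N$, which forces equality and completes the proof. The only non-trivial step is the cross-disjointness, which turns on the geometric meaning of $\lambda'_i$ as the last row index meeting column $i$; everything else is monotonicity and counting.
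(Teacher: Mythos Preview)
Your proof is correct. The paper does not actually supply a proof of this lemma; it simply observes that the statement is equivalent to \cite[I, (1.7)]{M} and refers the reader there. Your argument via the Durfee square is the natural direct approach: the key observation that $\lambda'_i \ge i \iff \lambda_i \ge i$ cleanly splits the index set, and your case analysis for cross-disjointness (comparing $j$ with $\lambda'_i$) is exactly the geometric content of the column-length interpretation. Macdonald's argument in (1.7) uses the boundary path (0--1 sequence) of the Young diagram inside an $m\times n$ rectangle, which gives a slightly slicker bijective picture but is essentially the same idea in different clothing. Your version has the advantage of being entirely self-contained and not requiring the rectangle setup; Macdonald's has the advantage of making the complementarity visually obvious from the lattice path. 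Either way, nothing is missing from your argument.
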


For $a,b\in\SP^1$, we define
\begin{equation*}
(a\vert b)_s:=\sum_{r\in\hf\N}(-1)^{2r}a_rb_r.
\end{equation*}
For $\la\in\mc{P}^+$, we define
$$(\la+2\rho_1\vert\la)_{1}:=\sum_{i\in\N} \la_i(\la_i-2i).$$

\begin{prop}\label{prop:key}
For $\la\in\mathcal P^+$, we have
\begin{equation*}
(\la+2\rho_1\vert\la)_{1}=(\theta_1(\la)\vert\theta_1(\la))_s.
\end{equation*}
\end{prop}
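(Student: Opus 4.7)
The plan is to expand both sides as finite sums indexed by the Durfee square of $\la$ and then to invoke \lemref{aux111} to match them term by term.

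First, I would unpack the pairing on the right-hand side. Writing out $\theta_1(\la)$ and separating the half-integer and integer indices according to the sign $(-1)^{2r}$, one obtains
\begin{equation*}
(\theta_1(\la)|\theta_1(\la))_s = \sum_{i\ge 1}\langle \la_i-i\rangle^2 - \sum_{i\ge 1}\langle \la'_i-(i-1)\rangle^2 .
\end{equation*}
Choose $N\ge \la_1'$, so that all nonzero contributions come from $i=1,\ldots,N$. Let $A=\{i:1\le i\le N,\ \la_i\ge i\}$ (the rows meeting the Durfee square of $\la$) and $B=\{1,\ldots,N\}\setminus A=\{i:\la_i\le i-1\}$. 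For $i\in A$ one has $\la'_i\ge i$, so $\langle\la'_i-(i-1)\rangle=\la'_i-i+1$; for $i\notin A$ one has $\langle\la'_i-(i-1)\rangle=0$. Similarly $\langle\la_i-i\rangle=\la_i-i$ for $i\in A$ with $\la_i>i$ (and both sides vanish when $\la_i=i$), while $\langle\la_i-i\rangle=0$ for $i\in B$. Thus
\begin{equation*}
(\theta_1(\la)|\theta_1(\la))_s = \sum_{i\in A}(\la_i-i)^2 - \sum_{i\in A}(\la'_i-i+1)^2 .
\end{equation*}

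Next, I would rewrite the left-hand side. The identity $\la_i(\la_i-2i)=(\la_i-i)^2-i^2$ together with $\la_i=0$ for $i>N$ gives
\begin{equation*}
(\la+2\rho_1|\la)_1 = \sum_{i=1}^N (\la_i-i)^2 - \sum_{i=1}^N i^2 = \sum_{i\in A}(\la_i-i)^2 + \sum_{i\in B}(i-\la_i)^2 - \sum_{i=1}^N i^2.
\end{equation*}
This is where \lemref{aux111} enters: it says exactly that the multiset $\{\la'_i-i+1:i\in A\}\cup\{i-\la_i:i\in B\}$ is a permutation of $\{1,\ldots,N\}$, hence
\begin{equation*}
\sum_{i=1}^N i^2 = \sum_{i\in A}(\la'_i-i+1)^2 + \sum_{i\in B}(i-\la_i)^2 .
\end{equation*}
Substituting this back cancels the $B$-sum and yields $(\la+2\rho_1|\la)_1 = \sum_{i\in A}(\la_i-i)^2-\sum_{i\in A}(\la'_i-i+1)^2$, which matches the expression obtained for $(\theta_1(\la)|\theta_1(\la))_s$.

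The step requiring the most care is the bookkeeping of which indices contribute nonzero summands on each side; once one recognizes that both $\langle\la'_i-(i-1)\rangle$ and $\la'_i-i+1$ (resp.\ $\langle\la_i-i\rangle$ and $\la_i-i$) are supported on the Durfee-square index set $A$, the remainder of the proof is a direct application of \lemref{aux111}. No further combinatorial input is required.
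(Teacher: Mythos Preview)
Your proof is correct and follows essentially the same approach as the paper: both expand the two sides as sums of squares, complete the square via $\la_i(\la_i-2i)=(\la_i-i)^2-i^2$, and then invoke \lemref{aux111} to identify $\sum_{i=1}^N i^2$ with the remaining terms. The only difference is organizational: you phrase the support of the nonzero summands in terms of the Durfee-square index set $A=\{i:\la_i\ge i\}$, whereas the paper uses the cutoff index $s=\min\{i:\la_i-i\le 0\}$ and the length $l=\la'_1$; these describe the same splitting.
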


\begin{proof}
Let $s$ be the smallest positive integer such that $\la_s-s\le 0$,
and $l$ the length of $\la$. We have {\allowdisplaybreaks
\begin{align*}
(\theta_1(\la)\vert\theta_1(\la))_s&=\sum_{i}\langle\la_i-i\rangle^2-\sum_j\langle\la'_j-j+1\rangle^2\\
&=\sum_{i=1}^l(\la_i-i)(\la_i-i)-\sum_{j=s}^l(\la_i-i)^2-\sum_j\langle\la'_j-j+1\rangle^2\\
&=\sum_{i=1}^l(\la_i-i)(\la_i-i)-\sum_{j=s}^l(-\la_i+i)^2-\sum_j\langle\la'_j-j+1\rangle^2\\
&=\sum_{i=1}^l(\la_i-i)(\la_i-i)-\sum_{j=1}^{\la'_1}j^2\quad{\rm by\ \lemref{aux111}}\\
&=\sum_{i=1}^l\la_i(\la_i-2i)+\sum_{i=1}^l i^2-\sum_{j=1}^{\la'_1}j^2\\
&=\sum_{i=1}^l\la_i(\la_i-2i)=(\la+2\rho_1\vert\la)_{1}.
\end{align*}}
\end{proof}

We also need to consider a slightly different setup. Let $\SP^2$
denote the set of sequences
$a=(a_0,a_\hf,a_1,a_{\frac{3}{2}},\cdots)$, where
$(a_0,a_1,a_2,\cdots)$ and
$(a_\hf,a_{\frac{3}{2}},a_{\frac{5}{2}},\cdots)$ are strict
partitions, such that $a_s=0$ implies $a_{s+\hf}=0$ for all $s$. Let
$\la$ be a partition, whose rows and columns we index with $\Z_+$,
i.e.~$\la=(\la_0,\la_1,\cdots )$. We define similarly an element
$\theta_2(\la)\in\SP^2$ by
\begin{equation*}
\theta_2(\la):=(\langle\la_0\rangle,\langle\la'_0-1\rangle,\langle\la_1-1\rangle,
\langle\la'_1-2\rangle,\langle\la_2-2\rangle,\langle\la'_2-3\rangle,
\cdots).
\end{equation*}
For $a,b\in\SP^2$, we define
\begin{equation*}
(a\vert b)_s:=\sum_{r\in\hf\Z_+}(-1)^{2r}a_rb_r.
\end{equation*}
For $\la\in\mc{P}^+$, we define
$(\la+2\rho_2\vert\la)_{2}:=\sum_{i\in\Z_+}\la_i(\la_i-2i)$.
Similarly one can show the following.

\begin{prop}\label{prop:key2}
For $\la\in\mathcal P^+$, we have
\begin{equation*}
(\la+2\rho_2\vert\la)_{2}=(\theta_2(\la)\vert\theta_2(\la))_s.
\end{equation*}
\end{prop}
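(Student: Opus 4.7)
The argument follows the same template as the proof of Proposition \ref{prop:key}, adapted to the shift by one in indexing. Unwinding the definition of $\theta_2$, the entry at integer position $i\in\Z_+$ is $\langle\la_i-i\rangle$, while at half-integer position $i+\hf$ it is $\langle\la'_i-(i+1)\rangle$, so
\[
(\theta_2(\la)\vert\theta_2(\la))_s \;=\; \sum_{i\ge 0}\langle\la_i-i\rangle^2 \,-\, \sum_{j\ge 0}\langle\la'_j-j-1\rangle^2.
\]
Let $l$ be the length of $\la$ and $s$ the smallest element of $\Z_+$ with $\la_s-s\le 0$. Mirroring the first half of the computation in Proposition \ref{prop:key}, I would write
$\sum_{i\ge 0}\langle\la_i-i\rangle^2 = \sum_{i=0}^{l}(\la_i-i)^2 - \sum_{i=s}^{l}(i-\la_i)^2$
and expand
$\sum_{i=0}^{l}(\la_i-i)^2 = (\la+2\rho_2\vert\la)_2 + \sum_{i=1}^{l}i^2$,
using $\la_l=0$ to drop the $\la_i(\la_i-2i)$ contributions beyond $i=l-1$ while keeping the $i^2$ terms.

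The remaining ingredient is the following shifted analog of Lemma \ref{aux111}: for a partition $\la$ indexed from $0$ with length $l$, the two multisets
\[
\{\,i-\la_i\,\vert\,i-\la_i>0,\ 0\le i\le l\,\}\quad\text{and}\quad\{\,\la'_j-j-1\,\vert\,\la'_j-j-1>0,\ j\ge 0\,\}
\]
are disjoint and together form a permutation of $\{1,2,\ldots,l\}$. I would derive this by applying Lemma \ref{aux111} to the auxiliary partition $\nu$ obtained from $\la$ by prepending a leftmost column of length $l$; in standard indexing from $1$ this has parts $\nu_k=\la_{k-1}+1$ for $1\le k\le l$, with $\nu'_1=l$ and $\nu'_k=\la'_{k-2}$ for $k\ge 2$. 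Applying Lemma \ref{aux111} with $N=l$ and performing the substitutions $j=k-2$ in the first sum and $i=k-1$ in the second produces exactly the displayed identity; the $k=1$ contribution $l$ in the first sum accounts for $i-\la_i=l$ at $i=l$, while terms with $j\ge l-1$ or $i\ge l$ outside the quoted ranges contribute nothing because $\la'_j\le l$ for all $j\ge 0$.

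Combining the two steps, the cancellation $\sum_{i=s}^{l}(i-\la_i)^2+\sum_{j\ge 0}\langle\la'_j-j-1\rangle^2=\sum_{k=1}^{l}k^2$ yields $(\theta_2(\la)\vert\theta_2(\la))_s = (\la+2\rho_2\vert\la)_2$. The only real obstacle is setting up the shifted version of Lemma \ref{aux111}; once the auxiliary partition $\nu$ is introduced, the remainder is the displayed chain of equalities in the proof of Proposition \ref{prop:key} verbatim, with the index range $i=0,\ldots,l$ in place of $i=1,\ldots,l$.
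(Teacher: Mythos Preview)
Your proposal is correct and follows exactly the approach the paper intends: the paper simply writes ``Similarly one can show the following'' and defers to the proof of Proposition~\ref{prop:key}, and you have carried out precisely that adaptation. Your device of deducing the shifted analogue of Lemma~\ref{aux111} from the original lemma by prepending a column of length $l$ (so that $\nu_k=\la_{k-1}+1$ and $\nu'_k=\la'_{k-2}$ for $k\ge 2$) is a clean way to avoid reproving the combinatorial fact from scratch; the paper does not spell this out, but it is the natural step one would take.
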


\begin{cor}\label{samecasimir}
Let $\la,\mu$ be partitions.  For $i=1,2$, we have
$(\la+2\rho_i\vert\la)_{i}=(\mu+2\rho_i\vert\mu)_{i}$ if and only if
$(\theta_i(\la)\vert\theta_i(\la))_s=(\theta_i(\mu)\vert\theta_i(\mu))_s$.
\end{cor}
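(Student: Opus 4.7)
The statement is an immediate consequence of Propositions \ref{prop:key} and \ref{prop:key2}, which have already been established just above. My plan is simply to apply each of those identities to both $\la$ and $\mu$ and chain the equalities.

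More explicitly, fix $i\in\{1,2\}$. By Proposition \ref{prop:key} (when $i=1$) or Proposition \ref{prop:key2} (when $i=2$), one has
\begin{equation*}
(\la+2\rho_i\vert\la)_{i}=(\theta_i(\la)\vert\theta_i(\la))_s
\quad\text{and}\quad
(\mu+2\rho_i\vert\mu)_{i}=(\theta_i(\mu)\vert\theta_i(\mu))_s.
\end{equation*}
Hence $(\la+2\rho_i\vert\la)_{i}=(\mu+2\rho_i\vert\mu)_{i}$ holds if and only if the right-hand sides agree, which is precisely $(\theta_i(\la)\vert\theta_i(\la))_s=(\theta_i(\mu)\vert\theta_i(\mu))_s$.

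There is no genuine obstacle here: all the combinatorial work—namely the use of Lemma \ref{aux111} to match the positive parts of $\langle\la_i-i\rangle$ with the negative parts of $\langle\la'_j-j+1\rangle$, and the consequent cancellation that reduces the supersymmetric pairing to $\sum_i \la_i(\la_i-2i)$—has been absorbed into the two preceding propositions. So the corollary is a one-line consequence, and the proof proposal is essentially to state the two equalities and observe that both sides of the ``if and only if'' are literally equated term by term.
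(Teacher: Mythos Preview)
Your proposal is correct and matches the paper's approach: the paper states the corollary without proof, treating it as an immediate consequence of Propositions \ref{prop:key} and \ref{prop:key2}, exactly as you do.
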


\subsection{Formulas for the $\overline{\mathfrak u}_-$-homology groups}
We keep the notation of Section 4. Suppose that
$\mu=\sum_{i\in\N}\mu_i\epsilon_i-\sum_{j\in-\Z_+}\mu_j\epsilon_j+n\La^\mf{x}_0\in\mc{P}_{\mf
l}^+$ is given, where we assume that $\mu_j=0$ for $j\leq 0$ if
$\G\neq \hgl_{\infty}$. By definition, $\mu^+=(\mu_1,\mu_2,\cdots)$
and $\mu^-=(\mu_0,\mu_{-1},\mu_{-2},\cdots)$ are partitions. Set
$\theta_1(\mu^+)=(a_{\hf},a_1,a_{\frac{3}{2}},\cdots)\in\SP^1$ and
$\theta_2(\mu^-)=(b_0,b_\hf,b_1,b_{\frac{3}{2}},\cdots)\in\SP^2$. We
define
\begin{equation}\label{thetamap}
\theta(\mu):=
\begin{cases}
\sum_{r\in\hf\N}a_r\delta_r-
\sum_{s\in\hf\Z_+}b_s\delta_{-s}+\ov{n}\SLa^{\mf a}_0, & \text{if $\G=\hgl_{\infty}$}, \\
\sum_{r\in\hf\N}a_r\delta_r+\ov{n}\SLa^\mf{x}_0, & \text{if
$\G=\mf{b}_{\infty},\mf{c}_{\infty},\mf{d}_{\infty}$},
\end{cases}
\end{equation}
where $\ov{n}=n\langle \La^\mf{x}_0,K\rangle$. It is not difficult
to see that $\theta(\mu) \in{\mathcal P}^+_{\bar{\mathfrak l}}$ and
$\theta$ yields a bijection from $\mc{P}_{\mf l}^+$ to
$\mc{P}_{\ov{\mf{l}}}^+$. In particular,
$\theta(\La^\mf{x}(\la))=\SLa^\mf{x}(\la)$ for $\la\in \mc{P}(G)$.

For $\la\in \mc{P}(G)$, we put
\begin{equation}\label{def:L}
\mathrm{L}(\G,\La^\mf{x}(\la)):=
\begin{cases}
L(\G,\La^{\mf d}(\la))\oplus L(\G,\La^{\mf d}(\tilde{\la})), & \text{if $G=\rm{O}(2\ell)$}, \\
L(\G,\La^\mf{x}(\la)), & \text{otherwise}.
\end{cases}
\end{equation}
We define $\mathrm{L}(\SG,\SLa^\mf{x}(\la))$ in a similar way. For
$w\in W^0$ we put
\begin{equation}\label{def:LL} \mathrm{L}(\ov{\mf{l}},\theta(w\circ\La^\mf{x}(\la))):=
\begin{cases}
L(\ov{\mf{l}},\theta(w\circ\La^{
\mf d}(\la)))\oplus L(\ov{\mf{l}},\theta(w\circ\La^{\mf d}(\tilde{\la}))), & \text{if $G=\rm{O}(2\ell)$}, \\
L(\ov{\mf{l}},\theta(w\circ\La^\mf{x}(\la))), & \text{otherwise}.
\end{cases}
\end{equation}
Also $\mathrm{L}(\mf{l},w\circ\La^\mf{x}(\la))$ is defined
similarly.

Note that ${\rm
ch}\mathrm{L}(\SG,\SLa^\mf{x}(\la))=\omega^\mf{x}\left( {\rm
ch}\mathrm{L}(\G,\La^\mf{x}(\la))\right)$ and ${\rm ch}{L}(\ov{\mf
l},\theta(\eta))=\omega^\mf{x}\left( {\rm
ch}{L}(\mf{l},\eta)\right)$, for $\eta\in\mc{P}^+_{\mf l}$.

\begin{lem}\label{aux112}
Let $\la\in\mc{P}(G)$ and $\bar{\mu}\in{\mathcal
P}^+_{\bar{\mathfrak l}}$. If $L(\bar{\mathfrak l},\bar{\mu})$
appears in $\Lambda^k\bar{{\mathfrak u}}_-\otimes
\mathrm{L}(\SG,\SLa^\mf{x}(\la))$, then there exists a unique
$\mu\in\mathcal P^+_{\mathfrak l}$, with $\theta(\mu)=\bar{\mu}$,
such that $L(\mathfrak l,\mu)$ appears in $\Lambda^k\mathfrak
u_-\otimes \mathrm{L}(\G,\La^\mf{x}(\la))$ with the same
multiplicity.
\end{lem}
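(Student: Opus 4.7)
The plan is to compare characters of the two chain spaces via the involution $\omega^{\mf{x}}$ of symmetric functions, and then extract multiplicities using complete reducibility together with linear independence of irreducible characters. The goal is the character identity
\begin{equation*}
{\rm ch}\bigl(\Lambda^k\ov{\mf u}_-\otimes \mathrm{L}(\SG,\SLa^{\mf{x}}(\la))\bigr) = \omega^{\mf{x}}\bigl({\rm ch}(\Lambda^k\mf u_-\otimes \mathrm{L}(\G,\La^{\mf{x}}(\la)))\bigr),
\end{equation*}
from which the lemma will follow by comparing coefficients of irreducible characters.

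First I would establish the piece ${\rm ch}(\Lambda^k\ov{\mf u}_-)=\omega^{\mf{x}}({\rm ch}(\Lambda^k\mf u_-))$. Using the identification $\mf u_-\cong\C^{>0}\otimes(\C^{\le 0})^*$ for type $\mf a$ and the descriptions in \eqnref{u-} for types $\mf b,\mf c,\mf d$ (with the obvious super analogues for $\ov{\mf u}_-$), both characters are explicit symmetric function generating functions in the variables $\{x_j\}$ and $\{x_j^{-1}\}$. By the construction of $\omega^{\mf{x}}$ in the proofs of Theorems \ref{char:super-A}--\ref{char:super-D} (insert half-integer-indexed variables and apply the involution swapping elementary and complete symmetric functions), the two generating functions correspond. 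Combining with Theorems \ref{char:super-A}--\ref{char:super-D}, which give ${\rm ch}\,\mathrm{L}(\SG,\SLa^{\mf{x}}(\la))=\omega^{\mf{x}}({\rm ch}\,\mathrm{L}(\G,\La^{\mf{x}}(\la)))$, yields the displayed character identity above.

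Next, I would decompose both chain spaces into irreducibles. On the classical side $L(\G,\La^{\mf{x}}(\la))$ is integrable and $\Lambda^k\mf u_-$ is polynomial over $\mf l$, so the tensor product is a direct sum of modules $L(\mf l,\mu)$ with $\mu\in\mc P^+_{\mf l}$; the character ${\rm ch}\,L(\mf l,\mu)$ is a product of Schur functions in $\{x_j\}_{j>0}$ and $\{x_j^{-1}\}_{j\le 0}$. On the super side, \lemref{complete:reducibility} together with \thmref{catnon:ss} gives complete reducibility into modules $L(\ov{\mf l},\bar\mu)$ with $\bar\mu\in\mc P^+_{\ov{\mf l}}$, whose characters are products of hook Schur functions in the half-integer-indexed variables. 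By \lemref{aux:hookSchur} and the definition of $\omega^{\mf{x}}$, these two families match up precisely via the bijection $\theta$:
\begin{equation*}
{\rm ch}\,L(\ov{\mf l},\theta(\mu)) \;=\; \omega^{\mf{x}}\bigl({\rm ch}\,L(\mf l,\mu)\bigr), \qquad \mu\in\mc P^+_{\mf l}.
\end{equation*}

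Finally, expanding both sides of the character identity in terms of these irreducible characters and invoking linear independence of the hook-Schur-function products $\{{\rm ch}\,L(\ov{\mf l},\bar\mu)\mid\bar\mu\in\mc P^+_{\ov{\mf l}}\}$, I conclude that the multiplicity of $L(\ov{\mf l},\bar\mu)$ in $\Lambda^k\ov{\mf u}_-\otimes \mathrm{L}(\SG,\SLa^{\mf{x}}(\la))$ equals the multiplicity of $L(\mf l,\theta^{-1}(\bar\mu))$ in $\Lambda^k\mf u_-\otimes \mathrm{L}(\G,\La^{\mf{x}}(\la))$; uniqueness of $\mu$ with $\theta(\mu)=\bar\mu$ is immediate from bijectivity of $\theta$. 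The main obstacle is the first step: verifying uniformly across types $\mf a,\mf b,\mf c,\mf d$ that ${\rm ch}(\Lambda^k\ov{\mf u}_-)=\omega^{\mf{x}}({\rm ch}(\Lambda^k\mf u_-))$ with the correct signs and variable substitutions on half-integer indices—this is a symmetric function bookkeeping exercise of the same flavor as the proofs of Theorems \ref{char:super-A}--\ref{char:super-D}, but must be carried out once for each classical type.
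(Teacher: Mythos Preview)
Your proposal is correct and follows essentially the same approach as the paper's proof: establish the character identity ${\rm ch}\bigl(\Lambda^k\ov{\mf u}_-\otimes \mathrm{L}(\SG,\SLa^{\mf{x}}(\la))\bigr) = \omega^{\mf{x}}\bigl({\rm ch}(\Lambda^k\mf u_-\otimes \mathrm{L}(\G,\La^{\mf{x}}(\la)))\bigr)$ and then read off multiplicities via the bijection $\theta$. The paper is simply terser---it treats the step you flag as the ``main obstacle'' as immediate from the definition of $\omega^{\mf{x}}$ (since $\omega^{\mf{x}}$ is a ring homomorphism sending $D^{\mf{x}}$ to $\ov{D}^{\mf{x}}$, it automatically sends each graded piece ${\rm ch}\,\Lambda^k\mf u_-$ to ${\rm ch}\,\Lambda^k\ov{\mf u}_-$).
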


\begin{proof} Since we have $\omega^\mf{x}\left({\rm
ch}\mathrm{L}(\G,\La^\mf{x}(\la))\right)={\rm
ch}\mathrm{L}(\SG,\ov{\La}^\mf{x}(\la))$ for $\la\in\mc{P}(G)$, we
conclude that
\begin{equation*}
\omega^\mf{x}\big{(}{\rm ch}\big{[}\Lambda^k\mathfrak u_-\otimes
\mathrm{L}(\G,\La^\mf{x}(\la))\big{]}\big{)}={\rm
ch}\big{[}\Lambda^k\bar{{\mathfrak u}}_-\otimes
\mathrm{L}(\SG,\SLa^\mf{x}(\la))\big{]}.
\end{equation*}

Since $\theta$ is a bijection, there exists a unique
$\mu\in\mc{P}^+_{\mf l}$ such that $\theta(\mu)=\ov{\mu}$. Therefore
$L(\mathfrak l,\mu)$ is a composition factor of
 $\Lambda^k\mathfrak u_-\otimes
\mathrm{L}(\G,\La^\mf{x}(\la))$ if and only if $L(\bar{\mathfrak
l},\theta(\mu))$ is a composition factor of
$\Lambda^k\bar{{\mathfrak u}}_-\otimes
\mathrm{L}(\SG,\SLa^\mf{x}(\la))$. Furthermore they have the same
multiplicity.
\end{proof}

The following lemma is crucial in the sequel.

\begin{lem}\label{aux113}
For ${\mu}\in{\mathcal P}^+_{{\mathfrak l}}$, we have
$$(\mu+2\rho_c|\mu)_c
=(\theta(\mu)+2\rho_s|\theta(\mu))_s.$$ In particular, for $\la\in
\mc{P}(G)$, we have
$(\La^\mf{x}(\la)+2\rho_c\vert\La^\mf{x}(\la))_c=(\SLa^\mf{x}(\la)+2\rho_s\vert\SLa^\mf{x}(\la))_s$.
\end{lem}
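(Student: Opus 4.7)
The ``in particular'' assertion is immediate from the main identity applied to $\mu=\La^\mf{x}(\la)$, since $\theta(\La^\mf{x}(\la))=\SLa^\mf{x}(\la)$ by the definitions \eqnref{thetamap} and those of $\SLa^\mf{x}(\la)$ in \secref{super:dualpairs}. So the plan reduces to establishing the main identity, and the strategy is a direct coordinate computation on both sides that reduces, via \propref{prop:key} and \propref{prop:key2}, to matching elementary linear correction terms.

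Concretely, write $\mu=\sum_{i\ge 1}\mu^+_i\epsilon_i-\sum_{i\ge 0}\mu^-_i\epsilon_{-i}+n\La^\mf{x}_0$ with $\mu^+,\mu^-$ partitions (the $\mu^-$ component being absent for types $\mf b,\mf c,\mf d$). Using the explicit values of $(\epsilon_i\vert\epsilon_j)_c$, $(\La^\mf{x}_0\vert\epsilon_i)_c$, and $(\rho_c\vert\epsilon_i)_c$ recorded in \secref{biformc}, a bilinear expansion yields
\[
(\mu+2\rho_c\vert\mu)_c=\sum_{i}\mu^+_i(\mu^+_i-2i)+\sum_{i}\mu^-_i(\mu^-_i-2i)+C_\mf{x}\bigl(n,|\mu^+|,|\mu^-|\bigr),
\]
where $C_\mf{x}$ is a concrete linear correction depending on the type through $\langle\rho_c,\widetilde E_j\rangle$, $\langle\La^\mf{x}_0,K\rangle$, and $\zeta$. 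By \propref{prop:key} and \propref{prop:key2}, the two quadratic sums equal $(\theta_1(\mu^+)\vert\theta_1(\mu^+))_s$ and $(\theta_2(\mu^-)\vert\theta_2(\mu^-))_s$, respectively.

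Next, I would expand $(\theta(\mu)+2\rho_s\vert\theta(\mu))_s$ using the formulas in \secref{rhos:aux}. The key point is that the sign $(-1)^{2r}$ appearing in $(\delta_r\vert\delta_t)_s$ is precisely the sign built into the $\SP^1$- and $\SP^2$-inner products, so the quadratic part of $(\theta(\mu)\vert\theta(\mu))_s$ reproduces $(\theta_1(\mu^+)\vert\theta_1(\mu^+))_s+(\theta_2(\mu^-)\vert\theta_2(\mu^-))_s$ on the nose. What remains is to verify that the linear-in-$n$ terms from $(\SLa^\mf{x}_0\vert\delta_r)_s$ and the $\rho_s$-contribution match $C_\mf{x}$. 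The essential combinatorial input is the identity
\[
\sum_{r\in\hf\N}\theta_1(\la)_r=|\la|,\qquad \sum_{s\in\hf\Z_+}\theta_2(\la)_s=|\la|,
\]
which follows from the hook decomposition of the Young diagram of $\la$ along its principal diagonal (and is closely related to \lemref{aux111}): each diagonal cell $(i,i)\in\la$ contributes a hook of size $(\la_i-i)+(\la'_i-(i-1))$, and these two summands are exactly the consecutive pairs of entries in $\theta_1(\la)$.

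The main obstacle is not conceptual but purely organizational: the four types $\mf{a,b,c,d}$ carry different normalizations ($s^\mf{x}_i,\ov s^\mf{x}_r$, the level $\langle\La^\mf{x}_0,K\rangle=1,\hf,1,\hf$, and the values of $\rho_c,\rho_s$), so $C_\mf{x}$ and its super counterpart must be written out and compared case by case. In each case, however, the matching reduces, after cancelling the quadratic terms via \propref{prop:key} and \propref{prop:key2}, to the combinatorial identity above together with a direct verification of coefficients; no new idea is needed beyond careful bookkeeping.
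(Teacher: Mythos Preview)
Your proposal is correct and follows essentially the same route as the paper: expand both sides in coordinates, apply \propref{prop:key} and \propref{prop:key2} to match the quadratic pieces, and reduce the remaining linear cross-terms to the size identity $\sum_r\theta_i(\la)_r=|\la|$, treating the four types separately. The paper carries out exactly this computation case by case (using the size identity implicitly without isolating it as a lemma), so your outline matches the published argument.
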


\begin{proof}
\textit{Case 1.} $\G=\hgl_{\infty}$. Suppose that
$\mu=\sum_{i\in\N}\mu_i\epsilon_i-\sum_{i\in-\Z_+}\mu_i\epsilon_i+n\La^{\mf
a}_0$, where $\mu^+=(\mu_1,\mu_2,\ldots)$ and
$\mu^-=(\mu_0,\mu_{-1},\ldots)$ are partitions. Let
$\theta_1(\mu^+)=(a_1,a_{\frac{3}{2}},\ldots)$ and
$\theta_2(\mu^-)=(b_0,b_{\frac{1}{2}},\ldots)$. Using
\propref{prop:key} and \propref{prop:key2}, we compute:
{\allowdisplaybreaks
\begin{align*}
&(\mu+2\rho_c\vert\mu)_c \\
&=(\sum_{i\in\N}\mu_i\epsilon_i+2\rho_c\vert\sum_{i\in\N}\mu_i\epsilon_i)_{c}+
(\sum_{i\in-\Z_+}\mu_i\epsilon_i-2\rho_c\vert\sum_{i\in-\Z_+}\mu_i\epsilon_i)_{c}
\\&\hskip 4cm+2n(\La^{\mf a}_0\vert\sum_{i\in\N}\mu_i\epsilon_i)_c-2n(\La^{\mf a}_0\vert\sum_{i\in-\Z_+}\mu_i\epsilon_i)_c\\
&=\sum_{i\in\N} \mu_i(\mu_i-2i)+\sum_{i\in\Z_+}
\mu_{-i}(\mu_{-i}-2i) -n(\sum_{i\in\N}\mu_i+\sum_{i\in-\Z_+}\mu_i)\\
&=(\mu^++2\rho_1\vert\mu^+)_{1}+
(\mu^-+2\rho_2\vert\mu^-)_{2}-n(\sum_{i\in\N}\mu_i+\sum_{i\in-\Z_+}\mu_i)\\
&=(\theta_1(\mu^+)\vert\theta_1(\mu^+))_s+({\theta}_2(\mu^-)\vert{\theta_2}(\mu^-))_{s}
-n(\sum_{r\in\hf\N}a_r+\sum_{r
\in\hf\Z_+}b_r)\\
&=(\sum_{r\in\hf\N}a_r\delta_r\vert\sum_{r\in\hf\N}a_r\delta_r)_s+(\sum_{r
\in\hf\Z_+}b_r\delta_r\vert\sum_{r \in\hf\Z_+}b_r\delta_r)_{s} \\
& \hskip 4cm +2n(\SLa^{\mf
a}_0\vert\sum_{r\in\hf\N}a_r\delta_r)-2n(\SLa^{\mf a}_0\vert\sum_{r
\in\hf\Z_+}b_r\delta_r)_s\\
&=(\theta(\mu)\vert\theta(\mu))_s=(\theta(\mu)+2\rho_s\vert\theta(\mu))_s,
\end{align*}}
where $\rho_s=0$ in this case.\vskip 3mm

Next, we assume that $\G=\mf{x}_{\infty}$ with
$\mf{x}\in\{\mf{b,c,d}\}$. Suppose that
$\mu=\sum_{i\in\N}\mu_i\epsilon_i+n\La^\mf{x}_0$, where
$\mu^{\circ}=(\mu_1,\mu_2,\ldots)$ is a partition. Let
$\theta_1(\mu^{\circ})=(a_1,a_{\frac{3}{2}},\ldots)$.

\textit{Case 2.} $\G=\mf{b}_{\infty}$. {\allowdisplaybreaks
\begin{align*}
&(\mu+2\rho_c\vert\mu)_c \\ &=
(\sum_{i\in\N}\mu_i\epsilon_i+2\rho_c\vert\sum_{i\in\N}\mu_i\epsilon_i)_{c}
+2n(\La^{\mf b}_0\vert\sum_{i\in\N}\mu_i\epsilon_i)_c=\sum_{i\in\N}\mu_i(\mu_i-2i+1) -n\sum_{i\in\N}\mu_i \\
&=(\mu^{\circ}+\rho_1|\mu^{\circ})_1
+\sum_{i\in\N}\mu_i-n\sum_{i\in\N}\mu_i
=(\theta_1(\mu^{\circ})\vert\theta_1(\mu^{\circ}))_s
+\sum_{r\in\hf\N}a_r-n\sum_{r\in\hf\N}a_r\\
&=(\sum_{r\in\hf\N}a_r\delta_r\vert\sum_{r\in\hf\N}a_r\delta_r)_s
+2(\rho_s\vert\sum_{r\in\hf\N}a_r\delta_r)_s+n(\SLa^{\mf
b}_0\vert\sum_{r\in\hf\N}a_r\delta_r)_s
\\&= (\sum_{r\in\hf\N}a_r\delta_r+\hf n\SLa^{\mf b}_0+2\rho_s\vert\sum_{r\in\hf\N}a_r\delta_r+\hf n\SLa^{\mf
b}_0)_s=(\theta(\mu)+2\rho_s\vert\theta(\mu))_s.
\end{align*}}\vskip 3mm

\textit{Case 3.} $\G=\mf{c}_{\infty}$. {\allowdisplaybreaks
\begin{align*}
&(\mu+2\rho_c\vert\mu)_c \\ &=
(\sum_{i\in\N}\mu_i\epsilon_i+2\rho_c\vert\sum_{i\in\N}\mu_i\epsilon_i)_{c}
+2n(\La^{\mf c}_0\vert\sum_{i\in\N}\mu_i\epsilon_i)_c=\sum_{i\in\N}\mu_i(\mu_i-2i) -2n\sum_{i\in\N}\mu_i \\
&=(\mu^{\circ}+\rho_1|\mu^{\circ})_1 -2n\sum_{i\in\N}\mu_i
=(\theta_1(\mu^{\circ})\vert\theta_1(\mu^{\circ}))_s
-2n\sum_{r\in\hf\N}a_r
\\
&=(\sum_{r\in\hf\N}a_r\delta_r\vert\sum_{r\in\hf\N}a_r\delta_r)_s
+2n(\SLa^{\mf c}_0\vert\sum_{r\in\hf\N}a_r\delta_r)_s
\\&= (\sum_{r\in\hf\N}a_r\delta_r+n\SLa^{\mf c}_0+2\rho_s\vert\sum_{r\in\hf\N}a_r\delta_r+ n\SLa^{\mf
c}_0)_s=(\theta(\mu)+2\rho_s\vert\theta(\mu))_s.
\end{align*}}
Note that $\rho_s=0$ in this case. \vskip 3mm

\textit{Case 4.} $\G=\mf{d}_{\infty}$. {\allowdisplaybreaks
\begin{align*}
&(\mu+2\rho_c\vert\mu)_c \\ &=
(\sum_{i\in\N}\mu_i\epsilon_i+2\rho_c\vert\sum_{i\in\N}\mu_i\epsilon_i)_{c}
+2n(\La^{\mf d}_0\vert\sum_{i\in\N}\mu_i\epsilon_i)_c=\sum_{i\in\N}\mu_i(\mu_i-2(i-1)) -n\sum_{i\in\N}\mu_i \\
&=(\mu^{\circ}+\rho_1|\mu^{\circ})_1
+2\sum_{i\in\N}\mu_i-n\sum_{i\in\N}\mu_i
=(\theta_1(\mu^{\circ})\vert\theta_1(\mu^{\circ}))_s
+2\sum_{r\in\hf\N}a_r-n\sum_{r\in\hf\N}a_r\\
&=(\sum_{r\in\hf\N}a_r\delta_r\vert\sum_{r\in\hf\N}a_r\delta_r)_s
+2(\rho_s\vert\sum_{r\in\hf\N}a_r\delta_r)_s+n(\SLa^{\mf
b}_0\vert\sum_{r\in\hf\N}a_r\delta_r)_s
\\&= (\sum_{r\in\hf\N}a_r\delta_r+\hf n\SLa^{\mf d}_0+2\rho_s\vert\sum_{r\in\hf\N}a_r\delta_r+\hf n\SLa^{\mf
d}_0)_s=(\theta(\mu)+2\rho_s\vert\theta(\mu))_s.
\end{align*}}
\end{proof}

\begin{rem}
Note that for $\la\in\mc{P}({\rm O}(2\ell))$
$$(\La^{\mf d}(\la)+2\rho_c\vert\La^{\mf d}(\la))_c=(\La^{\mf d}(\tilde{\la})+2\rho_c\vert\La^{\mf d}(\tilde{\la}))_c.$$
\end{rem}

\begin{cor}\label{aux222}
Let $\la\in\mc{P}(G)$ and let
$\La^k(\SU_-)\otimes\mathrm{L}(\SG,\SLa^\mf{x}(\la))=
\bigoplus_{\eta\in\mc{P}^+_{\ov{\mf{l}}}}L(\ov{\mf{l}},\eta)^{m_\eta}$
be its decomposition into irreducible $\ov{\mf{l}}$-modules. Then
$\eta=\theta(w\circ\La^\mf{x}(\la))$, for some $w\in W^0_k$, if and
only if
$(\eta+2\rho_s\vert\eta)_s=(\SLa^\mf{x}(\la)+2\rho_s\vert\SLa^\mf{x}(\la))_s$.
\end{cor}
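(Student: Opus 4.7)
The plan is to convert the super-side statement to the classical side via the bijection $\theta$ and then invoke \propref{eigenvalue:classical}(i), using \lemref{aux113} to match Casimir eigenvalues.

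First I would address the ``only if'' direction. Assume $\eta=\theta(w\circ\La^{\mf x}(\la))$ for some $w\in W^0_k$. Since the classical Casimir operator commutes with the $\rho_c$-shifted Weyl action, $(w\circ\La^{\mf x}(\la)+2\rho_c\vert w\circ\La^{\mf x}(\la))_c=(\La^{\mf x}(\la)+2\rho_c\vert\La^{\mf x}(\la))_c$. Applying \lemref{aux113} at both $\mu=w\circ\La^{\mf x}(\la)\in\mc{P}^+_{\mf l}$ and $\mu=\La^{\mf x}(\la)$ converts this equality into $(\eta+2\rho_s\vert\eta)_s=(\SLa^{\mf x}(\la)+2\rho_s\vert\SLa^{\mf x}(\la))_s$, as desired. (In the $G={\rm O}(2\ell)$ case one also uses the remark recorded just above, which says $\La^{\mf d}(\la)$ and $\La^{\mf d}(\tilde\la)$ have the same classical Casimir eigenvalue.)

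For the ``if'' direction, suppose $L(\ov{\mf l},\eta)$ appears in $\La^k(\SU_-)\otimes\mathrm{L}(\SG,\SLa^{\mf x}(\la))$ and $(\eta+2\rho_s\vert\eta)_s=(\SLa^{\mf x}(\la)+2\rho_s\vert\SLa^{\mf x}(\la))_s$. By \lemref{aux112} there is a unique $\mu\in\mc{P}^+_{\mf l}$ with $\theta(\mu)=\eta$ such that $L(\mf l,\mu)$ is a composition factor of $\La^k\mf u_-\otimes \mathrm{L}(\G,\La^{\mf x}(\la))$ with the same multiplicity. Applying \lemref{aux113} to $\mu$ and to $\La^{\mf x}(\la)$ yields
\begin{equation*}
(\mu+2\rho_c\vert\mu)_c=(\eta+2\rho_s\vert\eta)_s=(\SLa^{\mf x}(\la)+2\rho_s\vert\SLa^{\mf x}(\la))_s=(\La^{\mf x}(\la)+2\rho_c\vert\La^{\mf x}(\la))_c.
\end{equation*}
Now \propref{eigenvalue:classical}(i) produces a unique $w\in W^0_k$ with $\mu=w\circ\La^{\mf x}(\la)$ (in the $G={\rm O}(2\ell)$ case, applied to whichever of $\la$ or $\tilde\la$ yields $\mu$; the remark above guarantees both possibilities have the required eigenvalue). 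Applying $\theta$ gives $\eta=\theta(w\circ\La^{\mf x}(\la))$.

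The main obstacle is not so much conceptual as bookkeeping: one must verify that $\mu=\theta^{-1}(\eta)$ lies in $\mc{P}^+_{\mf l}$, that the multiplicity transfer of \lemref{aux112} actually lands a weight of $\La^k\mf u_-\otimes L(\G,\La^{\mf x}(\la))$ into the hypothesis of \propref{eigenvalue:classical}(i), and that the $\mathrm{L}$-versus-$L$ conventions of \eqnref{def:L}--\eqnref{def:LL} are kept consistent in the $G={\rm O}(2\ell)$ case. Once these checks are made, the corollary follows immediately from the two ingredients \lemref{aux112} and \lemref{aux113} together with \propref{eigenvalue:classical}(i).
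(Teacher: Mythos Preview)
Your proposal is correct and follows essentially the same route as the paper: both directions are reduced to the classical side via \lemref{aux113} and then \propref{eigenvalue:classical}(i) is invoked. You are in fact slightly more careful than the paper's own proof, since you explicitly cite \lemref{aux112} to guarantee that $\mu=\theta^{-1}(\eta)$ actually occurs as a weight of $\Lambda^k\mf u_-\otimes\mathrm{L}(\G,\La^{\mf x}(\la))$, which is a hypothesis of \propref{eigenvalue:classical}(i) that the paper leaves implicit.
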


\begin{proof}
Suppose that there exits $w\in W^0_k$ with
$\eta=\theta(w\circ\La^\mf{x}(\la))$. Then by \lemref{aux113} and
the $W$-invariance of $(\cdot\vert\cdot)_c$ we have
\begin{equation*}
(\eta+2\rho_s\vert\eta)_s=(w\circ\La^\mf{x}(\la)+2\rho_c\vert
w\circ\La^\mf{x}(\la))_c=(\La^\mf{x}(\la)+2\rho_c\vert
\La^\mf{x}(\la))_c= (\SLa^\mf{x}(\la)+2\rho_s\vert
\SLa^\mf{x}(\la))_s.
\end{equation*}
Conversely, suppose that
$(\eta+2\rho_s\vert\eta)_s=(\SLa^\mf{x}(\la)+2\rho_s\vert\SLa^\mf{x}(\la))_s$.
By \lemref{aux113} we have
\begin{equation*}
(\theta^{-1}(\eta)+2\rho_c\vert\theta^{-1}(\eta))_c=(\La^\mf{x}(\la)+2\rho_c\vert\La^\mf{x}(\la))_c.
\end{equation*}
By \propref{eigenvalue:classical} (i) there exist $w\in W^0_k$ such
that $\theta^{-1}(\eta)=w\circ\La^\mf{x}(\la)$.
\end{proof}

\begin{rem}\label{multirem}
The multiplicity $m_\eta$ equals $1$ if $\eta$ satisfies the
condition of \corref{aux222} for $\mf{x}=\mf{a,b,c}$ or
$\mf{x}=\mf{d}$ with $\la\not=\tilde{\la}$. In the case of
$\mf{x}=\mf{d}$ with $\la=\tilde{\la}$ we have $m_\eta=2$. This
follows from \eqnref{def:L}, \propref{eigenvalue:classical} and
\lemref{aux112}.
\end{rem}

\begin{thm}\label{mainthm}
Let $\la\in\mc{P}(G)$, $k\in\Z_+$, and $\theta$ as in {\rm
\eqnref{thetamap}}. As $\ov{\mf{l}}$-modules we have
\begin{equation*}
{\rm
H}_k(\ov{\mf{u}}_-;\mathrm{L}(\SG,\SLa^\mf{x}(\la)))\cong\bigoplus_{w\in
W^0_k}\mathrm{L}(\ov{\mf{l}},\theta(w\circ\La^\mf{x}(\la))).
\end{equation*}
 In particular, ${\rm ch}\big{[}{\rm H}_k(\bar{\mathfrak
u}_-;\mathrm{L}(\SG,\SLa^\mf{x}(\la)))\big{]}=\omega^\mf{x}\big{(}{\rm
ch}\big{[}{\rm H}_k({\mathfrak
u}_-;\mathrm{L}(\G,\La^\mf{x}(\la)))\big{]}\big{)}$.
\end{thm}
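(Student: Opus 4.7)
The plan is to combine the Euler--Poincar\'e principle for the Koszul complex $C_\bullet(\ov{\U}_-;\mathrm{L}(\SG,\SLa^\mf{x}(\la)))$ with the character formulas of Theorems~\ref{char:super-A}--\ref{char:super-D} and the Casimir eigenvalue constraints of \secref{casimir:op}. The correspondence $\omega^\mf{x}$, together with the identity ${\rm ch}\,L(\ov{\mf{l}},\theta(\mu))=\omega^\mf{x}({\rm ch}\,L(\mf{l},\mu))$ for $\mu\in\mc{P}^+_{\mf{l}}$ recorded just before \lemref{aux112}, is what allows the classical decomposition of \propref{homologyclassical} to be transferred to the super setting.

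First I would observe that by \lemref{complete:reducibility} the $\ov{\mf{l}}$-module ${\rm H}_k(\ov{\U}_-;\mathrm{L}(\SG,\SLa^\mf{x}(\la)))$ is completely reducible, and any irreducible component $L(\ov{\mf{l}},\eta)$ is in particular a composition factor of the chain space $\Lambda^k\ov{\U}_-\otimes\mathrm{L}(\SG,\SLa^\mf{x}(\la))$. By \propref{eigenvalue:super}, $\eta$ satisfies $(\eta+2\rho_s|\eta)_s=(\SLa^\mf{x}(\la)+2\rho_s|\SLa^\mf{x}(\la))_s$, and then \corref{aux222} forces $\eta=\theta(w\circ\La^\mf{x}(\la))$ (or its $\tilde{\la}$-twin, when $G={\rm O}(2\ell)$) for some $w\in W^0_k$, so in particular $l(w)=k$. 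Furthermore, \lemref{aux112} combined with \remref{multirem} gives an upper bound on the multiplicity of each such component in the chain space, which after being absorbed into the notation $\mathrm{L}(\ov{\mf{l}},\cdot)$ of \eqnref{def:LL} becomes exactly $1$ uniformly across types.

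Next, a case-by-case inspection of the listings of $\ov{\Delta}^-(S)$ in \secref{IDLSA}, combined with the super-exterior rule $\Lambda^k(V_{\ov 0}\oplus V_{\ov 1})=\bigoplus_{i+j=k}\Lambda^iV_{\ov 0}\otimes S^jV_{\ov 1}$, shows that $\sum_{k\ge 0}(-1)^k{\rm ch}(\Lambda^k\ov{\U}_-)=\ov{D}^\mf{x}$. The Euler--Poincar\'e principle applied to the chain complex therefore gives
\begin{equation*}
\sum_{k\ge 0}(-1)^k{\rm ch}\,{\rm H}_k(\ov{\U}_-;\mathrm{L}(\SG,\SLa^\mf{x}(\la)))=\ov{D}^\mf{x}\cdot{\rm ch}\,\mathrm{L}(\SG,\SLa^\mf{x}(\la)),
\end{equation*}
whose right-hand side equals $\sum_{k\ge 0}\sum_{w\in W^0_k}(-1)^k{\rm ch}\,\mathrm{L}(\ov{\mf{l}},\theta(w\circ\La^\mf{x}(\la)))$ by Theorems~\ref{char:super-A}--\ref{char:super-D}. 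Since each $w\in W^0$ lies in $W^0_k$ for the unique value $k=l(w)$, and since $\theta$ is a bijection from $\mc{P}^+_{\mf{l}}$ to $\mc{P}^+_{\ov{\mf{l}}}$, the characters $\{{\rm ch}\,\mathrm{L}(\ov{\mf{l}},\theta(w\circ\La^\mf{x}(\la)))\}_{w\in W^0}$ appearing in this identity are pairwise distinct. Matching coefficients term by term and combining with the multiplicity bound from the previous paragraph then forces the multiplicity of $\mathrm{L}(\ov{\mf{l}},\theta(w\circ\La^\mf{x}(\la)))$ in ${\rm H}_{l(w)}$ to be exactly $1$ and its multiplicity in every other ${\rm H}_k$ to vanish, yielding the claimed isomorphism. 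The ``in particular'' character identity then follows at once by applying $\omega^\mf{x}$ term by term to the decomposition of \propref{homologyclassical}.

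The hard part is ensuring that the Casimir constraint is sharp enough that no single $\mathrm{L}(\ov{\mf{l}},\theta(w\circ\La^\mf{x}(\la)))$ can contribute to two distinct degrees ${\rm H}_k$ and ${\rm H}_{k'}$ (which in the alternating sum could cancel and be invisible); this is exactly \corref{aux222}, resting in turn on the matching of classical and super Casimir eigenvalues established in \lemref{aux113}. A secondary but delicate bookkeeping task is the uniform treatment of the $G={\rm O}(2\ell)$ case, where \remref{multirem} records a multiplicity-$2$ phenomenon for self-conjugate $\la$; this is handled by the doubling built into \eqnref{def:L} and \eqnref{def:LL}, and one must check that this doubling propagates consistently through each step of the Euler--Poincar\'e calculation.
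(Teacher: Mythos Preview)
Your proof is correct and follows essentially the same route as the paper's own argument: both use complete reducibility of the chain spaces (\lemref{complete:reducibility}), the Casimir eigenvalue constraint (\propref{eigenvalue:super} and \corref{aux222}) to pin down the possible summands of ${\rm H}_k$ together with their multiplicity bounds (\lemref{aux112}, \remref{multirem}), and then the Euler--Poincar\'e principle combined with the character formulas of Theorems~\ref{char:super-A}--\ref{char:super-D} and the distinctness of the highest weights $\theta(w\circ\La^\mf{x}(\la))$ to force the exact multiplicities. Your presentation makes the ``upper bound plus alternating sum forces equality'' logic slightly more explicit than the paper does, but the content is the same.
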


\begin{proof} Let $\mu\in\mathcal P^+_{\mathfrak l}$ be such that $L(\mathfrak l,\mu)$
is a composition factor of ${\rm H}_k({\mathfrak
u}_-;\mathrm{L}(\G,\La^\mf{x}(\la)))$. Then it is precisely a
composition factor of $\Lambda^k\mathfrak u_-\otimes
\mathrm{L}(\G,\La^\mf{x}(\la))$ with
$(\mu+2\rho_c\vert\mu)_c=(\La^\mf{x}(\la)+2\rho_c\vert\La^\mf{x}(\la))_c$
by \propref{eigenvalue:classical}. Furthermore each appears with
multiplicity given in \remref{multirem}. By \corref{aux222} the
corresponding $\theta(\mu)$'s are precisely the weights in
$\mc{P}^+_{\ov{\mf l}}$ such that $L(\ov{\mf{l}},\theta(\mu))$
appears as a composition factor of $\Lambda^k\bar{{\mathfrak
u}}_-\otimes \mathrm{L}(\SG,\SLa^\mf{x}(\la))$ with
$(\theta(\mu)+2\rho_s\vert\theta(\mu))_s=(\SLa^\mf{x}(\la)+2\rho_s\vert\SLa^\mf{x}(\la))_s$,
which has the same multiplicity.

Now, Theorems \ref{char:super-A}--\ref{char:super-D}, and the
Euler-Poincar\'e principle imply
\begin{align*}
&\sum_{k=0}^\infty(-1)^k{\rm ch}\big{[}{\rm H}_k(\bar{\mathfrak
u}_-;\mathrm{L}(\SG,\SLa^\mf{x}(\la)))\big{]}=\sum_{k=0}^\infty\sum_{w\in
W^0_k}(-1)^k \mc{H}_{\la,w},
\end{align*}
where
\begin{equation*}
\mc{H}_{\la,w}=
\begin{cases}
HS_{(\la^+_w)'}({\bf x}_{>0}) HS_{\la^-_w}({\bf x}^{-1}_{\le 0}), &
\text{if
$G={\rm GL}(\ell)$}, \\
HS_{(\la_w)'}({\bf x}_{>0}), & \text{if
$G={\rm Pin}(2\ell), {\rm Sp}(2\ell), {\rm O}(2\ell+1)$}, \\
HS_{(\la_w)'}({\bf x}_{>0})+HS_{(\tilde{\la}_w)'}({\bf x}_{>0}), &
\text{if $G={\rm O}(2\ell)$}.
\end{cases}
\end{equation*}
Since all the highest weights are distinct, we conclude from
Proposition \ref{eigenvalue:super} that
\begin{align*}
{\rm ch}\big{[}{\rm H}_k(\bar{\mathfrak
u}_-;\mathrm{L}(\SG,\SLa^\mf{x}(\la)))\big{]}=\sum_{w\in
W^0_k}\mc{H}_{\la,w}.
\end{align*}
The right hand side is equal to $\omega^\mf{x}\big{(}{\rm
ch}\big{[}{\rm H}_k({\mathfrak
u}_-;\mathrm{L}(\G,\La^\mf{x}(\la)))\big{]}\big{)}$ by
\propref{homologyclassical}.
\end{proof}

\thmref{mainthm} and \corref{aux222} imply the following super
analogue of \propref{eigenvalue:classical} (ii), which is the
converse of Proposition of \ref{eigenvalue:super}.

\begin{cor}
If $L(\overline{\mathfrak l},\eta)$ is an irreducible
$\overline{\mathfrak l}$-module in $\Lambda^k(\overline{\mathfrak
u}_-)\otimes \mathrm{L}(\SG,\SLa^\mf{x}(\la))$ such that $(\eta
+2\rho_s\vert\eta)_s=(\SLa^\mf{x}(\la)+2\rho_s\vert\SLa^\mf{x}(\la))_s$,
then $L(\overline{\mathfrak l},\eta)$ is a component of ${\rm
H}_k(\overline{\mathfrak u}_-;\mathrm{ L}(\SG,\SLa^\mf{x}(\la)))$.
Furthermore the multiplicities coincide.
\end{cor}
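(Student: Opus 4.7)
The plan is to deduce this corollary as essentially a converse reading of \thmref{mainthm}, combining \corref{aux222} with \lemref{aux112} and \propref{eigenvalue:classical}. The rough strategy: the Casimir eigenvalue hypothesis pins down $\eta$ to be of the form $\theta(w\circ\La^\mf{x}(\la))$ for some $w\in W^0_k$, and then the multiplicity question is transferred to the classical side where it has already been settled.

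First I would apply \corref{aux222} to the given $\eta$. Since $L(\ov{\mf l},\eta)$ appears in $\Lambda^k(\SU_-)\otimes\mathrm{L}(\SG,\SLa^\mf{x}(\la))$ and its super Casimir eigenvalue matches that of $\SLa^\mf{x}(\la)$, \corref{aux222} yields $\eta=\theta(w\circ\La^\mf{x}(\la))$ for some $w\in W^0_k$. \thmref{mainthm} then instantly gives $L(\ov{\mf l},\eta)$ as a summand of ${\rm H}_k(\SU_-;\mathrm{L}(\SG,\SLa^\mf{x}(\la)))$, establishing the first assertion.

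For the multiplicity comparison, I would use \lemref{aux112} to identify the multiplicity $m_\eta$ of $L(\ov{\mf l},\eta)$ in $\Lambda^k(\SU_-)\otimes \mathrm{L}(\SG,\SLa^\mf{x}(\la))$ with the multiplicity of $L(\mf l,\theta^{-1}(\eta))=L(\mf l,w\circ\La^\mf{x}(\la))$ in $\Lambda^k(\U_-)\otimes\mathrm{L}(\G,\La^\mf{x}(\la))$. By the $W$-invariance of $(\cdot\vert\cdot)_c$ one has $(w\circ\La^\mf{x}(\la)+2\rho_c\vert w\circ\La^\mf{x}(\la))_c=(\La^\mf{x}(\la)+2\rho_c\vert\La^\mf{x}(\la))_c$, so \propref{eigenvalue:classical}(i) applies and forces this classical multiplicity to be $1$, except in the self-dual $\rm O(2\ell)$ case where it is $2$. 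The homology side carries the same number by \thmref{mainthm} and \remref{multirem}, so the two multiplicities coincide.

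The only genuine bookkeeping issue, which I expect to be the main (though mild) obstacle, is the $\rm O(2\ell)$ case: in \eqnref{def:L} the auxiliary module $\mathrm{L}(\SG,\SLa^{\mf d}(\la))$ is a sum of $L(\SG,\SLa^{\mf d}(\la))$ and $L(\SG,\SLa^{\mf d}(\tilde{\la}))$, which share the same Casimir eigenvalue, and when $\la=\tilde{\la}$ the multiplicities double. This is precisely what is tracked in \remref{multirem} and in the proof of \thmref{mainthm}, so handling it here is a straightforward repetition of the same counting.
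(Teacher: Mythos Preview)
Your proposal is correct and follows essentially the same route as the paper, which simply states that the corollary is implied by \thmref{mainthm} and \corref{aux222}. Your unpacking of the multiplicity count via \lemref{aux112}, \propref{eigenvalue:classical}(i), and \remref{multirem} is exactly what underlies that terse citation, including the $\mathrm{O}(2\ell)$ bookkeeping.
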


\begin{rem} In the spirit of \cite{CWZ} we can use
Serganova's homological interpretation of Kazhdan-Lusztig
polynomials \cite{Se} to reformulate \thmref{mainthm} into a
statement about equality of certain Kazhdan-Lusztig polynomials for
$\G$ and $\SG$.
\end{rem}

\section{Resolution in terms of generalized Verma
modules}\label{sec:resolution}

Let $V$ be a $\SG$-module, on which the action of $\ov{\mf u}_+$ is
locally nilpotent. For $c\in\C$, we define
\begin{equation*}
V^c:=\{v\in V\vert (\overline{\Omega}-c)^nv=0, n\gg 0\},
\end{equation*}
i.e.~$V^c$ is the generalized $\overline{\Omega}$-eigenspace
corresponding to the eigenvalue $c$.  Clearly we have
$V=\bigoplus_{c\in\C}V^c$.

We can now construct a resolution of generalized Verma modules for
$\mathrm{L}(\SG,\SLa^\mf{x}(\la))$.

Note that $\SG/\sP$ is a $\sP$-module, on which $\ov{\mf{u}}_+$ acts
trivially, and hence so is $\Lambda^k(\SG/\sP)$, for all $k\in\Z_+$.
Thus the induced module $U(\SG)\otimes_{\sP}\Lambda^k(\SG/\sP)$ is a
$\SG$-module. For $\la\in\mc{P}(G)$ we consider the $\SG$-module
$Y_k=\left(U(\SG)\otimes_{U(\overline{\mathfrak
p})}\Lambda^k(\SG/\overline{\mathfrak p})\right)\otimes \mathrm{
L}\left(\SG,\SLa^\mf{x}(\la)\right)$ . Let
\begin{equation}\label{standardres}
\cdots\stackrel{d_{k+1}}{\longrightarrow}Y_k\stackrel{d_k}{\longrightarrow}Y_{k-1}\stackrel{d_{k-1}}{\longrightarrow}\cdots
\stackrel{d_1}{\longrightarrow}Y_0\stackrel{d_0}{\longrightarrow}\mathrm{
L}(\SG,\SLa^\mf{x}(\la))\rightarrow 0
\end{equation}
be the standard resolution of ${\rm L}(\SG,\SLa^\mf{x}(\la))$ as in
\cite{KK} (cf.~\cite{GL, J, L}). Put
$$c_{\la}=(\SLa^\mf{x}(\la)+2\rho_s|\SLa^\mf{x}(\la))_s.$$
The restriction of {\rm \eqnref{standardres}} to the generalized
$c_{\la}$-eigenspace of $\overline{\Omega}$ produces a resolution of
$\SG$-modules
\begin{equation}\label{resolution:verma}
\cdots\stackrel{d_{k+1}}{\longrightarrow}Y^{c_{\la}}_k\stackrel{d_k}
{\longrightarrow}Y^{c_{\la}}_{k-1}\stackrel{d_{k-1}}{\longrightarrow}\cdots
\stackrel{d_1}{\longrightarrow}Y^{c_{\la}}_0\stackrel{d_0}
{\longrightarrow}\mathrm{L}(\SG,\SLa^\mf{x}(\la))\rightarrow 0.
\end{equation}

For $\nu\in \mc{P}^+_{\ov{\mf{l}}}$, let
$V(\nu)=U(\SG)\otimes_{U(\ov{\mf p})}L(\ov{\mf l},\nu)$ be the
generalized Verma module, where $\ov{\mf u}_+$ acts trivially on
$L(\ov{\mf l},\nu)$. We have the following counterpart of Theorem
8.7 of \cite{GL}.

\begin{thm}\label{resolution}
For $\la\in\mc{P}(G)$, the {\rm \eqnref{resolution:verma}} is a
resolution of $\mathrm{ L}(\SG,\SLa^\mf{x}(\la))$ such that each
$Y^{c_{\la}}_k$ has a flag of generalized Verma modules. Furthermore
as $\ov{\mf{l}}+\ov{\mf{u}}_-$-modules we have
$$Y^{c_{\la}}_k\cong
\begin{cases}
\bigoplus_{w\in W^0_k} V(\theta(w\circ\La^\mf{x}(\la))), & \text{if
$G\neq \rm{O}(2\ell)$}, \\
\bigoplus_{w\in W^0_k}\left[ V(\theta(w\circ\La^{\mf d}(\la)))\oplus
V(\theta(w\circ\La^{\mf d}(\tilde{\la})))\right], & \text{if
$G=\rm{O}(2\ell)$}.
\end{cases}
$$
\end{thm}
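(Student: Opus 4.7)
I follow Garland--Lepowsky's strategy for \cite[Theorem 8.7]{GL}, adapted to the super setting by means of the Casimir eigenspace technique. The standard Koszul-type resolution \eqnref{standardres} of \cite{KK} is the starting point. Since $\overline{\Omega}$ is central, it commutes with all $\SG$-module maps, hence with every differential $d_k$; the decomposition $Y_k=\bigoplus_{c\in\C}Y_k^c$ into generalized $\overline{\Omega}$-eigenspaces is therefore a decomposition of complexes. Because $\overline{\Omega}$ acts on $\mathrm{L}(\SG,\SLa^\mf{x}(\la))$ by the scalar $c_\la$, restriction to the $c=c_\la$ summand yields the exact sequence \eqnref{resolution:verma}.

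To exhibit the generalized Verma flag on $Y_k^{c_\la}$, I will first construct such a flag on $Y_k$ itself and then intersect with the $c_\la$-eigenspace. By the tensor identity, $Y_k\cong U(\SG)\otimes_{U(\ov{\mf p})}N$, where $N:=\Lambda^k(\ov{\mf u}_-)\otimes\mathrm{L}(\SG,\SLa^\mf{x}(\la))|_{\ov{\mf p}}$ and $\ov{\mf u}_+$ acts trivially on the first tensor factor (since it acts trivially on $\SG/\ov{\mf p}$). The $\ov{\mf l}$-module $N$ is completely reducible by \lemref{aux411}, \lemref{complete:reducibility}, and \thmref{catnon:ss}. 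The plan is to upgrade this $\ov{\mf l}$-semisimple decomposition to a $\ov{\mf p}$-stable filtration whose successive quotients are irreducible $\ov{\mf l}$-modules $L(\ov{\mf l},\nu)$ with $\ov{\mf u}_+$ acting trivially on each. For this I will choose a total order on the $\ov{\mf l}$-highest weights appearing in $N$ refining the dominance order---so that $\ov{\mf u}_+$, which raises weight by positive roots in $\ov{\Delta}^+(S)$, sends each isotypic component strictly into higher ones---and filter $N$ by unions of isotypic components in this order. Inducing from $\ov{\mf p}$ to $\SG$ then produces a $\SG$-module flag on $Y_k$ whose successive quotients are the generalized Verma modules $V(\nu)$, each with the multiplicity of $L(\ov{\mf l},\nu)$ in $N$.

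The Casimir $\overline{\Omega}$ acts on $V(\nu)$ by the scalar $(\nu+2\rho_s|\nu)_s$, so intersecting the flag with the $c_\la$-generalized eigenspace retains only those $V(\nu)$ with $(\nu+2\rho_s|\nu)_s=c_\la$; by \corref{aux222} and \remref{multirem}, these are precisely $V(\theta(w\circ\La^\mf{x}(\la)))$ for $w\in W^0_k$, with the multiplicities matching the stated formula (in particular recovering the doubling in the $G={\rm O}(2\ell)$ case). To deduce the $\ov{\mf{l}}+\ov{\mf{u}}_-$-module direct-sum decomposition from the $\SG$-module flag, I will show that every short exact sequence $0\to V(\mu)\to E\to V(\nu)\to 0$ of $\ov{\mf{l}}+\ov{\mf{u}}_-$-modules splits: since $V(\mu)|_{\ov{\mf l}}$ and $V(\nu)|_{\ov{\mf l}}$ are completely reducible (PBW together with \thmref{catnon:ss}), a $\ov{\mf l}$-lift of $L(\ov{\mf l},\nu)\subset V(\nu)$ into $E$ exists, and extends uniquely to a $\ov{\mf{l}}+\ov{\mf{u}}_-$-splitting by the freeness of $V(\nu)$ as a $U(\ov{\mf u}_-)$-module. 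Iterating yields the decomposition. The principal obstacle in this outline is verifying that a total order refining dominance really makes the isotypic filtration of $N$ stable under $\ov{\mf u}_+$---a careful weight argument is needed here---while the remaining steps are standard once Sections \ref{casimir:op} and \ref{comp:eigenvalues} are in hand.
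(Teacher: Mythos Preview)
Your proposal is correct and follows essentially the same Garland--Lepowsky strategy as the paper: build a generalized Verma flag on $Y_k$ from a $\ov{\mf p}$-filtration of $\Lambda^k(\SG/\ov{\mf p})\otimes \mathrm{L}(\SG,\SLa^\mf{x}(\la))$, then restrict to the $c_\la$-eigenspace of $\overline{\Omega}$ and invoke \corref{aux222} and \remref{multirem}. The only notable difference is that the paper constructs the $\ov{\mf p}$-filtration by taking successive $\ov{\mf u}_+$-invariants of $\mathrm{L}(\SG,\SLa^\mf{x}(\la))$ (using that $\ov{\mf u}_+$ already acts trivially on $\Lambda^k(\SG/\ov{\mf p})$) rather than by a total order refining dominance---this sidesteps the ``principal obstacle'' you flag---while your explicit argument for the $\ov{\mf l}+\ov{\mf u}_-$-splitting is something the paper leaves implicit.
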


\begin{proof}
To keep notation simple let us assume that $\G\neq \mf{d}_\infty$.

First we claim that $Y_k\cong U(\SG)\otimes_{U(\overline{\mathfrak
p})}(\Lambda^k(\SG/\overline{\mathfrak p})\otimes
{L}(\SG,\SLa^\mf{x}(\la)))$ has a flag of generalized Verma modules.
This can be seen as follows. From the proof of
\lemref{complete:reducibility} we see that the space
$\Lambda^k(\SG/\overline{\mathfrak p})$ decomposes into a finite
direct sum of $\ov{\mf p}$-modules, which is semisimple over
$\ov{\mf l}$ and on which $\ov{\mf u}_+$ acts trivially. On the
other hand ${L}(\SG,\SLa^\mf{x}(\la))$ has a $\ov{\mf p}$-submodule,
which as an $\ov{\mf l}$-module is isomorphic to $L(\ov{\mf
l},\SLa^\mf{x}(\la))$ and on which $\overline{\mathfrak u}_+$ acts
trivially.  This module is precisely the $\overline{\mathfrak
u}_+$-invariants of ${L}(\SG,\SLa^\mf{x}(\la))$. Now we consider the
$\overline{\mathfrak u}_+$-invariants of
${L}(\SG,\SLa^\mf{x}(\la))/L(\overline{\mf l},\SLa^\mf{x}(\la))$,
which is a direct sum of irreducible $\sL$-modules. Continuing this
way we see that ${L}(\SG,\SLa^\mf{x}(\la))$ as a $\ov{\mf p}$-module
has an infinite filtration of $\ov{\mf p}$-modules of the form
\begin{equation*}
L_0={L}(\SG,\SLa^\mf{x}(\la))\supseteq L_1\supseteq
L_2\supseteq\cdots
\end{equation*}
such that $L_i/L_{i+1}$ is a direct sum of irreducible $\sL$-modules
on which $\ov{\mf u}_+$ acts trivially. Thus the same is true for
$\Lambda^k(\SG/\overline{\mathfrak p})\otimes
{L}(\SG,\SLa^\mf{x}(\la))$. Now $Y_k$ is the induced module and
hence the claim follows.  Since $Y_k^{c_\la}$ is a direct summand of
$Y_k$, it also has a flag of generalized Verma modules.  Now
$\ov{\Omega}$ acts on $V(\SG,\mu)$ ($\mu\in\mc{P}_{\ov{\mf l}}^+$)
as the scalar $(\mu+2\rho_s|\mu)_s$, and hence $Y_k^{c_\la}$
consists of those $V(\SG,\mu)$ in $Y_k$ with
$c_\la=(\mu+2\rho_s|\mu)_s$.  But such $\mu$'s are precisely of the
form $\theta(w\circ\La^\mf{x}(\la))$ by \corref{aux222}.
\end{proof}

\section{$\mf{u}_-$-homology formula of $\hgl_{\infty}$-modules at negative integral
levels}\label{sec:negative}

In this section we apply our method to obtain a Kostant-type
homology formula for irreducible highest weight
$\hgl_{\infty}$-modules that appear in bosonic Fock spaces. These
representations have negative integral levels and have been studied
in the context of vertex algebras in \cite{KR2}.

We fix a positive integer $\ell\geq 1$. Consider $\ell$ pairs of
free bosons $\gamma^{\pm,i}(z)$ with $i=1,\cdots,\ell$ (see Section
3.3). Let $\F^{-\ell}$ denote the corresponding Fock space generated
by the vaccum vector $|0\rangle$, which is annihilated by
$\gamma^{\pm,i}_r$ for $r>0$.

Given $\la\in\mc{P}({\rm GL}(\ell))$ with $\la_1\geq \cdots\geq
\la_i>0=\cdots=0>\la_{j+1}\geq \cdots\geq \la_{\ell}$, we define
\begin{equation*}
\La^{\mf
a}_-(\la)=\sum_{k=1}^i\la_k\epsilon_k+\sum_{k=j+1}^{\ell}\la_k\epsilon_{k-\ell}-\ell\La^{\mf
a}_0.
\end{equation*}
There exists a joint action of $(\hgl_{\infty},{\rm GL}(\ell))$ on
$\mf{F}^{-\ell}$, giving rise to the following multiplicity-free
decomposition:

\begin{prop} \label{duality-negative} \cite{KR2} {\rm (cf.~\cite[Theorem 5.1]{W1})} As a $(\widehat{\gl}_\infty,{\rm
GL}(\ell))$-module, we have
\begin{equation} \label{eq:dual-negative}
\F^{-\ell}\cong\bigoplus_{\la\in {\mc P}({\rm GL}(\ell))}
L(\hgl_\infty,\La^{\mf a}_-(\la))\otimes V_{{\rm GL}(\ell)}^\la.
\end{equation}
\end{prop}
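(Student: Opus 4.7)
The plan is to follow the proof pattern of \propref{duality}, replacing fermions with bosons and keeping careful track of the sign changes that produce the negative level.

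First, I would realize $\hgl_\infty$ as operators on $\F^{-\ell}$ by defining $E_{ij}$ through normal-ordered bilinear expressions in the bosonic modes $\gamma^{\pm,k}_r$, identifying the basis $\{e_i\}_{i\in\Z}$ of $\C^\infty$ with the bosonic mode indices across the $\ell$ copies. A direct calculation using $[\gamma^{+,i}_r,\gamma^{-,j}_s]=\delta_{ij}\delta_{r+s,0}$ reproduces the cocycle \eqnref{cocycle}, but with the opposite sign relative to the fermionic case, so that $K$ acts on $\F^{-\ell}$ as $-\ell\cdot\mathrm{Id}$. The action of ${\rm GL}(\ell)$ is the natural one by linear combinations of the $\ell$ bosonic pairs, and commutes with the $\hgl_\infty$-action by construction.

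Next, I would compute the joint character of $\F^{-\ell}$ under the Cartan actions. Each bosonic creation mode contributes a geometric series, giving
$$\mathrm{ch}\,\F^{-\ell}=\prod_{i=1}^\ell\prod_{n\in\N}\frac{1}{(1-x_nz_i)(1-x^{-1}_{1-n}z_i^{-1})}.$$
Expanding this via the dual Cauchy identity (using Schur Laurent polynomials indexed by $\mc{P}({\rm GL}(\ell))$) yields a sum over $\la\in\mc{P}({\rm GL}(\ell))$ of $\mathrm{ch}\,V^\la_{{\rm GL}(\ell)}$ times a Schur-type expression in the $x$-variables that we must match to $\mathrm{ch}\,L(\hgl_\infty,\La^{\mf a}_-(\la))$.

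Then I would identify joint highest weight vectors relative to the chosen Borel subalgebras of $\hgl_\infty$ and $\gl(\ell)$. A direct computation shows they are parameterized by $\la\in\mc{P}({\rm GL}(\ell))$: for each such $\la$ the highest weight vector is an explicit polynomial in the creation modes applied to $|0\rangle$, with $\hgl_\infty$-weight exactly $\La^{\mf a}_-(\la)$ (the summand $-\ell\La^{\mf a}_0$ reflecting the negative level) and $\gl(\ell)$-weight $\la$. In particular, the positive parts of $\la$ produce the terms $\sum_{k=1}^i\la_k\epsilon_k$, while the negative parts produce $\sum_{k=j+1}^\ell\la_k\epsilon_{k-\ell}$ via a shift by $\ell$ that encodes the Fermi level of the bosonic vacuum.

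The main obstacle is establishing complete reducibility of the joint action: unlike in \propref{duality}, $\hgl_\infty$-modules at negative integral level are not integrable, so the standard arguments from the theory of (generalized) Kac-Moody algebras are unavailable. This can be handled by invoking unitarity of the bosonic Fock module with respect to an appropriate $*$-structure (as in \cite{KR2, W1}), which forces semisimplicity under the commuting pair and reduces the proof to the character identity above; the decomposition then follows from the linear independence of the $\gl(\ell)$-characters $\{\mathrm{ch}\,V^\la_{{\rm GL}(\ell)}\}_{\la\in\mc{P}({\rm GL}(\ell))}$.
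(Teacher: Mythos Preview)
The paper does not prove this proposition; it is quoted from \cite{KR2} (with the precise formulation in \cite[Theorem 5.1]{W1}) and used as input. Your sketch follows the standard route taken in those references---construct the commuting $\hgl_\infty\times{\rm GL}(\ell)$-action via normal-ordered boson bilinears, compute the joint character, locate the joint highest weight vectors, and invoke unitarity for complete reducibility---so there is nothing to compare against in the present paper itself, and your outline is in line with the cited sources.
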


Computing the trace of the operator
$\prod_{n\in\Z}x_n^{E_{nn}}\prod_{i=1}^\ell z_i^{e_{ii}}$ on both
sides of \eqnref{eq:dual-negative} (see~\cite{W1} for the action of
$(\widehat{\gl}_\infty,{\rm GL}(\ell))$ on $\F^{-\ell}$), we obtain
the following identity:
\begin{equation}\label{eq:dual-negative1}
\frac{1}{\prod_{i=1}^\ell\prod_{n\in\N}(1-x_nz_i)(1-x_{1-n}^{-1}z^{-1}_{i})}=\sum_{\la\in
{\mc P}({\rm GL}(\ell))}{\rm ch}L(\hgl_\infty,\Lambda_-^{\mf
a}(\la)){\rm ch}V^\la_{{\rm GL}(\ell)}.
\end{equation}

\begin{thm}\label{char:negative}
For $\la\in\mc{P}({\rm GL}(\ell))$, we have
\begin{equation*}
{\rm ch}L(\hgl_{\infty},\La^{\mf a}_-(\la))
=\frac{\sum_{k=0}^{\infty}\sum_{w\in
W^0_k}(-1)^ks_{(\la^+_w)'}(x_1,x_2,\cdots)
s_{(\la^-_w)'}(x^{-1}_0,x^{-1}_{-1},\cdots)}{\prod_{i,j\in\N}(1-x_{-i+1}^{-1}x_j)}.
\end{equation*}
\end{thm}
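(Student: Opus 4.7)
The plan is to mimic the proof of \thmref{char:super-A}: starting from the classical $(\hgl_\infty,{\rm GL}(\ell))$-duality identity \eqnref{combid-classical1}, I apply the standard involution $\omega$ of symmetric functions (which sends $e_k\mapsto h_k$ and equivalently $s_\mu\mapsto s_{\mu'}$) separately in the two variable sets $\{x_j\}_{j\in\N}$ and $\{x_{1-i}^{-1}\}_{i\in\N}$, with the $z_i$'s treated as inert constants, and then read off the desired formula by comparing with \eqnref{eq:dual-negative1}.

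First, using \eqnref{char:schur} and \eqnref{homology:schur} for type $\mf{a}$, I rewrite \eqnref{combid-classical1} in the form
\[
D^{\mf a}\prod_{i=1}^\ell\prod_{n\in\N}(1+x_nz_i)(1+x_{1-n}^{-1}z_i^{-1})=\sum_{\la\in\mc{P}({\rm GL}(\ell))}\mc{S}_\la\,{\rm ch}V^\la_{{\rm GL}(\ell)},
\]
where $\mc{S}_\la:=\sum_{k\geq 0}(-1)^k\sum_{w\in W^0_k}s_{\la^+_w}(x_1,x_2,\ldots)s_{\la^-_w}(x_0^{-1},x_{-1}^{-1},\ldots)$. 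Applying $\omega\otimes\omega$, the two infinite products on the LHS turn into $\prod_n(1-x_nz_i)^{-1}$ and $\prod_n(1-x_{1-n}^{-1}z_i^{-1})^{-1}$, while each Schur function $s_{\la^{\pm}_w}$ is replaced by $s_{(\la^{\pm}_w)'}$.

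The only non-formal point is the claim that $D^{\mf a}=\prod_{i,j\in\N}(1-x_{1-i}^{-1}x_j)$ is invariant under $\omega\otimes\omega$. To verify this, set $y_i=x_{1-i}^{-1}$ and observe that, for each fixed $y_i$,
\[
\prod_{j\in\N}(1-y_ix_j)=\sum_{k\geq 0}(-y_i)^k e_k(x_1,x_2,\ldots)\;\stackrel{\omega}{\longmapsto}\;\sum_{k\geq 0}(-y_i)^k h_k(x_1,x_2,\ldots)=\prod_{j\in\N}\frac{1}{1+y_ix_j}.
\]
A second, symmetric application of $\omega$ in the $y$-variables sends $\prod_i(1+y_ix_j)^{-1}=\sum_k h_k(y)(-x_j)^k$ to $\sum_k e_k(y)(-x_j)^k=\prod_i(1-y_ix_j)$, returning $D^{\mf a}$ to itself.

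Combining the transformed LHS with \eqnref{eq:dual-negative1} then yields
\[
D^{\mf a}\sum_{\la}{\rm ch}L(\hgl_\infty,\La^{\mf a}_-(\la))\,{\rm ch}V^\la_{{\rm GL}(\ell)}=\sum_{\la}\Bigl(\sum_{k\geq 0}(-1)^k\!\!\sum_{w\in W^0_k}s_{(\la^+_w)'}(x_1,x_2,\ldots)s_{(\la^-_w)'}(x_0^{-1},x_{-1}^{-1},\ldots)\Bigr){\rm ch}V^\la_{{\rm GL}(\ell)},
\]
and the stated formula follows from the linear independence of the Schur Laurent polynomials $\{{\rm ch}V^\la_{{\rm GL}(\ell)}\}_{\la\in\mc{P}({\rm GL}(\ell))}$. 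The main (and essentially only) technical point is the $\omega\otimes\omega$-invariance of $D^{\mf a}$; everything else is a formal transcription of identities already set up in Sections \ref{classical:homology} and \ref{super:dualpairs}.
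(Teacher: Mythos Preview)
Your proof is correct and follows essentially the same strategy as the paper's: rewrite \eqnref{combid-classical1} via \eqnref{char:schur} and \eqnref{homology:schur}, apply the involution $\omega$ in both sets of variables, and compare with \eqnref{eq:dual-negative1} using linear independence of the Schur Laurent polynomials. The explicit verification that $D^{\mf a}$ is $\omega\otimes\omega$-invariant is a useful clarification that the paper leaves implicit.
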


\begin{proof}
The proof is similar to that of \thmref{char:super-A}. Using
\eqnref{homology:schur} and \eqnref{char:schur} we may view
\eqnref{combid-classical1} as a symmetric function identity in the
variables $\{x_{i}^{-1}\}_{i\in-\Z_+}$ and $\{x_{i}\}_{i\in \N}$. We
then apply to this new identity the involution of the ring of
symmetric functions that interchanges the elementary symmetric
functions with the complete symmetric functions. Comparing with
\eqnref{eq:dual-negative1}, we obtain the required formula.
\end{proof}

Let $\mf{u}_{\pm}$ and $\mf{l}$ be as in
\secref{classical:homology}. For $\la\in\mc{P}({\rm GL}(\ell))$,
define the homology groups ${\rm H}_k({\mathfrak
u}_-;L(\hgl_{\infty},\La^{\mf a}_-(\la)))$ in the same  way.  One
can check that ${\rm H}_k({\mathfrak u}_-;L(\hgl_{\infty},\La^{\mf
a}_-(\la)))$ is completely reducible as an $\mathfrak l$-module, and
if $L({\mathfrak l},\eta)$ is a irreducible component, then
$(\eta+2\rho_c\vert\eta)_c=(\La^{\mf a}_-(\la)+2\rho_c\vert\La^{\mf
a}_-(\la))_c$ (cf.~\cite{J,L}).

Suppose that
$\mu=\sum_{i\in\N}\mu_i\epsilon_i-\sum_{j\in-\Z_+}\mu_j\epsilon_j+c\La^{\mf
a}_0\in\mc{P}_{\mf l}^+$ is given. Put
$(\mu_1,\mu_2,\ldots)'=(\mu_1',\mu_2',\ldots)$ and
$(\mu_0,\mu_{-1},\ldots)'=(\mu_0',\mu_{-1}',\ldots)$, and define
\begin{equation}\label{vartheta}
\vartheta(\mu)=\sum_{i\in\N}\mu'_i\epsilon_i-\sum_{j\in-\Z_+}\mu'_j\epsilon_j-c\La^{\mf
a}_0.
\end{equation}
Note that $\vartheta(\mu)\in \mc{P}_{\mf l}^+$ and $\vartheta^2$ is
the identity on $\mc{P}_{\mf l}^+$. In particular, we have
$\vartheta(\La^{\mf a}(\la))=\La^{\mf a}_-(\la)$.

Let $\omega$ be the map which has been used in the proof of
\thmref{char:negative}. That is, $\omega ({\rm ch}L(\hgl,\La^{\mf
a}(\la)))={\rm ch}L(\hgl,\La^{\mf a}_-(\la))$, $\omega ({\rm
ch}L(\mf{l},\mu))={\rm ch} L(\mf{l},\vartheta(\mu))$ for $\mu\in
\mc{P}_{\mf l}^+$, and
\begin{equation*}
\omega\big{(}{\rm ch}\big{[}\Lambda^k\mathfrak u_-\otimes
{L}(\hgl_{\infty},\La^{\mf a}(\la))\big{]}\big{)}={\rm
ch}\big{[}\Lambda^k{{\mathfrak u}}_-\otimes
{L}(\hgl_{\infty},\La^{\mf a}_-(\la))\big{]}.
\end{equation*}

Using similar arguments as in \lemref{aux112}, we can check that for
$\mu\in \mc{P}^{+}_{\mf l}$, $L(\mf{l},\mu)$ is a component in
$\Lambda^k\mathfrak u_-\otimes L(\hgl_\infty,\La^{\mf a}(\la))$ if
and only if $L(\mf{l},\vartheta(\mu))$ is a component in
$\Lambda^k\mathfrak u_-\otimes L(\hgl_\infty,\La_-^{\mf a}(\la))$.
Furthermore, they have the same multiplicity.

To compare the eigenvalues of the Casimir operator $\Omega$ on the
irreducible components in homology groups, we need the following
lemma.
\begin{lem}\cite[Lemma 4.7]{CZ1}\label{conjugatevalue}
For $\lambda\in\mc{P}^+$, we have
\begin{align*}
&(\la+2\rho_1|\la)_1 = -(\la'+2\rho_1|\la')_1-2|\la|, \\
&(\la+2\rho_2|\la)_2 = -(\la'+2\rho_2|\la')_2+2|\la|.
\end{align*}
\end{lem}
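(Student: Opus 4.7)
The plan is to reduce both identities to elementary symmetric-function bookkeeping on the Young diagram of $\la$, using two well-known identities obtained by counting boxes with weights:
\begin{equation*}
\sum_{j\ge 1}\la_j^{2}=\sum_{k\ge 1}(2k-1)\la'_k,\qquad 2\sum_{j\ge 1}j\la_j=|\la|+\sum_{j\ge 1}(\la'_j)^{2}.
\end{equation*}
The second follows from $\sum_j j\la_j=\sum_{(i,j)\in\la}j=\sum_{j\ge 1}\la'_j(\la'_j+1)/2$ (after summing columnwise), and the first follows similarly from $\la_j^{2}=\sum_{k=1}^{\la_j}(2k-1)$.

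For the $i=1$ identity I would substitute the second displayed identity into the definition $(\la+2\rho_1|\la)_1=\sum_j\la_j^{2}-2\sum_j j\la_j$ to get
\begin{equation*}
(\la+2\rho_1|\la)_1=\sum_j\la_j^{2}-|\la|-\sum_j(\la'_j)^{2}.
\end{equation*}
Interchanging $\la\leftrightarrow\la'$ and using $|\la'|=|\la|$ yields the analogous expression for $(\la'+2\rho_1|\la')_1$, and adding the two gives $-2|\la|$, which is the first claim.

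For the $i=2$ identity the only change is that rows and columns of $\la$ are indexed by $\Z_+$ instead of $\N$. After the reindexing $\mu_i:=\la_{i-1}$, $\mu'_j:=\la'_{j-1}$ (so $\mu,\mu'$ are ordinary partitions), one has
\begin{equation*}
(\la+2\rho_2|\la)_2=\sum_{i\ge 1}\mu_i(\mu_i-2(i-1))=(\mu+2\rho_1|\mu)_1+2|\mu|,
\end{equation*}
and similarly $(\la'+2\rho_2|\la')_2=(\mu'+2\rho_1|\mu')_1+2|\mu|$. Summing these two relations and applying the $i=1$ result to $\mu$ (whose size equals $|\la|$) gives $(\la+2\rho_2|\la)_2+(\la'+2\rho_2|\la')_2=-2|\la|+4|\la|=2|\la|$, which is the second claim.

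The argument is entirely computational, so there is no genuine obstacle; the only point that requires care is the index shift in the $i=2$ case, which is responsible for the sign change between the two identities. Since the result is recorded as Lemma~4.7 of \cite{CZ1}, one may alternatively simply invoke the reference.
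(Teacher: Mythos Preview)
Your argument is correct. The two displayed ``box-counting'' identities are standard, your derivations of them are accurate, and the substitution and index-shift steps are carried out cleanly. In particular the reindexing $\mu_i=\la_{i-1}$ for the $i=2$ case is exactly the right way to reduce to the $i=1$ identity, and your observation that the conjugate partition shifts compatibly (so that $\mu'_j=\la'_{j-1}$) is what makes the computation go through.

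As for comparison: the paper does not supply its own proof of this lemma at all; it simply records the statement and attributes it to \cite[Lemma~4.7]{CZ1}. So there is nothing in the present paper to compare your proof against. Your write-up is a perfectly good self-contained replacement for the bare citation, and in fact the argument in \cite{CZ1} proceeds along essentially the same elementary lines (expand $(\la+2\rho_1|\la)_1$, rewrite $\sum j\la_j$ columnwise, and compare with the conjugate). The only stylistic remark is that your final sentence (``one may alternatively simply invoke the reference'') is unnecessary once you have given the proof.
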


\begin{lem}\label{casimir-negative} Let $\la\in \mc{P}({\rm GL}(\ell))$ and $\mu\in \mc{P}^{+}_{\mf l}$
such that $L(\mf{l},\mu)$ is a component in $\Lambda^k\mathfrak
u_-\otimes L(\hgl_\infty,\La^{\mf a}(\la))$. Then $(\La^{\mf
a}(\la)+2\rho_c|\La^{\mf a}(\la))_c=(\mu+2\rho_c|\mu)_c$ if and only
if $(\La^{\mf a}_-(\la)+2\rho_c|\La^{\mf
a}_-(\la))_c=(\vartheta(\mu)+2\rho_c|\vartheta(\mu))_c$.
\end{lem}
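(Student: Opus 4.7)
The plan is to prove the stronger statement that
\[
\mathcal{S}(\mu) := (\mu + 2\rho_c|\mu)_c + (\vartheta(\mu) + 2\rho_c|\vartheta(\mu))_c
\]
depends only on $\la$, not on the particular composition factor $\mu$. Once this is in hand, applying it to $\mu = \La^{\mf a}(\la)$ itself---where $\vartheta(\La^{\mf a}(\la)) = \La^{\mf a}_-(\la)$ as noted just before the lemma---yields $\mathcal{S}(\La^{\mf a}(\la)) = \mathcal{S}(\mu)$, and rearranging gives
\[
(\La^{\mf a}(\la)+2\rho_c|\La^{\mf a}(\la))_c - (\mu+2\rho_c|\mu)_c = (\vartheta(\mu)+2\rho_c|\vartheta(\mu))_c - (\La^{\mf a}_-(\la)+2\rho_c|\La^{\mf a}_-(\la))_c,
\]
from which the equivalence of the two Casimir equalities is immediate.

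To evaluate $(\mu+2\rho_c|\mu)_c$, I would decompose $\mu = \sum_{i\ge 1}\mu_i\epsilon_i - \sum_{j\le 0}\mu_j\epsilon_j + c\La^{\mf a}_0$ and regard $\nu^+ = (\mu_1,\mu_2,\ldots)$ and $\nu^- = (\mu_0,\mu_{-1},\ldots)$ as partitions indexed from $1$ and $0$ respectively, matching the pairings $(\cdot|\cdot)_1$ and $(\cdot|\cdot)_2$ of \secref{comp:eigenvalues}. Expanding directly using the identities from \secref{biformc}---namely $(\epsilon_i|\epsilon_j)_c = \delta_{ij}$, $(\La^{\mf a}_0|\epsilon_i)_c = -\zeta(i)/2$, $(\rho_c|\epsilon_i)_c = -i$, and $(\La^{\mf a}_0|\La^{\mf a}_0)_c = (\La^{\mf a}_0|\rho_c)_c = 0$---produces the closed form
\[
(\mu+2\rho_c|\mu)_c = (\nu^+ + 2\rho_1|\nu^+)_1 + (\nu^- + 2\rho_2|\nu^-)_2 - c(|\nu^+|+|\nu^-|).
\]
By definition of $\vartheta$, the analogous formula for $\vartheta(\mu)$ is identical with $\nu^{\pm}$ replaced by $(\nu^{\pm})'$ and $c$ by $-c$; since conjugation preserves partition size, adding the two expressions cancels the level terms, and \lemref{conjugatevalue} gives $(\nu^++2\rho_1|\nu^+)_1 + ((\nu^+)'+2\rho_1|(\nu^+)')_1 = -2|\nu^+|$ and $(\nu^-+2\rho_2|\nu^-)_2 + ((\nu^-)'+2\rho_2|(\nu^-)')_2 = 2|\nu^-|$, so that $\mathcal{S}(\mu) = 2(|\nu^-|-|\nu^+|)$.

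The final step is to verify that $|\nu^+|-|\nu^-|$ is invariant as $\mu$ ranges over composition factors. Note that this quantity equals the value of the functional $\sum_{i\in\Z}E_{ii}$ on a $\mu$-weight vector. Both the $\mf u_-$-roots $\epsilon_i-\epsilon_j$ (for $i>0$, $j\le 0$) and the $\hgl_\infty$-negative roots $\epsilon_j-\epsilon_i$ ($i<j$) lie in the kernel of this functional, so its value is constant across all weights of $\Lambda^k\mf u_-\otimes L(\hgl_\infty,\La^{\mf a}(\la))$ and equals $\sum_{k\in\Z}\la_k' = |\la^+|-|\la^-|$. Therefore $\mathcal{S}(\mu) = 2(|\la^-|-|\la^+|)$ depends only on $\la$, and the proof concludes as described above. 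The only real bookkeeping hurdle is keeping the indexing of $\nu^-$ (starting at $0$) aligned with the $(\cdot|\cdot)_2$-pairing so that the relevant half of \lemref{conjugatevalue} applies verbatim; this is precisely the convention already set up in \propref{prop:key2}, so it should present no difficulty.
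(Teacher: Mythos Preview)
Your proposal is correct and follows essentially the same route as the paper: both compute $(\mu+2\rho_c|\mu)_c$ via the splitting into $(\cdot|\cdot)_1$ and $(\cdot|\cdot)_2$, invoke \lemref{conjugatevalue} to relate the Casimir value of $\mu$ to that of $\vartheta(\mu)$, and finish by observing that $\La^{\mf a}(\la)-\mu$ is a sum of roots and hence pairs trivially with $\sum_{i\in\Z}\epsilon_i$ (your phrasing via the functional $\sum_i E_{ii}$ is the same observation). The only cosmetic difference is that you package the identity as ``$\mathcal{S}(\mu)$ depends only on $\la$'', whereas the paper writes the equivalent relation $(\vartheta(\mu)+2\rho_c|\vartheta(\mu))_c=-(\mu+2\rho_c|\mu)_c-2(\mu-\ell\La^{\mf a}_0|\sum_{i\in\Z}\epsilon_i)_c$ directly.
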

\begin{proof}
Suppose that
$\mu=\sum_{i\in\N}\mu_i\epsilon_i-\sum_{j\in-\Z_+}\mu_j\epsilon_j+\ell\La^{\mf
a}_0$. Put $\mu^+=(\mu_1,\mu_2,\ldots)$ and
$\mu^-=(\mu_0,\mu_{-1},\ldots)$. Then {\allowdisplaybreaks
\begin{align*}
&(\mu+2\rho_c\vert\mu)_c \\
&=\sum_{i\in\N} \mu_i(\mu_i-2i)+\sum_{i\in\N}
\mu_{1-i}(\mu_{1-i}-2i) -\ell(\sum_{i\in\N}\mu_i+\sum_{i\in-\Z_+}\mu_i)\\
&=(\mu^++2\rho_1\vert\mu^+)_{1}+
(\mu^-+2\rho_2\vert\mu^-)_{2}-\ell(\sum_{i\in\N}\mu_i+\sum_{i\in-\Z_+}\mu_i).
\end{align*}}
On the other hand, put $(\mu^+)'=(\mu'_1,\mu'_2,\ldots)$ and
$(\mu^-)'=(\mu'_0,\mu'_{-1},\ldots)$. Then
\begin{align*}
&(\vartheta(\mu)+2\rho_c|\vartheta(\mu))_c\\
&=\sum_{i\in\N} \mu'_i(\mu'_i-2i)+\sum_{i\in\N}
\mu'_{1-i}(\mu'_{1-i}-2i) +\ell(\sum_{i\in\N}\mu'_i+\sum_{i\in-\Z_+}\mu'_i)\\
&=((\mu^+)'+2\rho_1|(\mu^+)')_1+((\mu^-)'+2\rho_2|(\mu^-)')_2+\ell(\sum_{i\in\N}\mu'_i+\sum_{i\in-\Z_+}\mu'_i)
\\
&=-(\mu^++2\rho_1|\mu^+)_1-2\sum_{i\in\N}\mu_i-(\mu^-+2\rho_2|\mu^-)_2+2\sum_{i\in-\Z_+}\mu_i+\ell(\sum_{i\in\N}\mu_i+\sum_{i\in-\Z_+}\mu_i)
\\
&=-(\mu+2\rho_c|\mu)_c-2(\mu-\ell\La^{\mf
a}_0|\sum_{i\in\Z}\epsilon_i)_c.
\end{align*}

Then from the above equations, we observe that{\allowdisplaybreaks
\begin{align*}
&(\La^{\mf a}(\la)+2\rho_c|\La^{\mf a}(\la))_c-(\mu+2\rho_c|\mu)_c \\
&=-(\La^{\mf a}_-(\la)+2\rho_c|\La^{\mf
a}_-(\la))_c+(\vartheta(\mu)+2\rho_c|\vartheta(\mu))_c-2(\La^{\mf
a}(\la)-\mu|\sum_{i\in\Z}\epsilon_i)_c.
\end{align*}}
Since $\La^{\mf a}(\la)-\mu$ is a sum of positive roots, we have
$(\La^{\mf a}(\la)-\mu|\sum_{i\in\Z}\epsilon_i)_c=0$. This completes
the proof.
\end{proof}

Using the same arguments as in \thmref{mainthm} together with Lemma
\ref{casimir-negative}, we conclude the following.
\begin{thm} Let $\la\in \mc{P}({\rm GL}(\ell))$, $k\in\Z_+$, and $\vartheta$ as in {\rm \eqnref{vartheta}}.
Then as $\mf{l}$-modules, we have
\begin{equation*}
{\rm H}_k({\mathfrak u}_-;L(\hgl_{\infty},\La^{\mf a}_-(\la))\cong
\bigoplus_{w\in W^0_k}L({\mf l},\vartheta(w\circ \La^{\mf a}(\la))).
\end{equation*}
In particular, we have ${\rm ch}\big{[}{\rm H}_k({\mathfrak
u}_-;L(\hgl_{\infty},\La^{\mf a}_-(\la))\big{]}=\omega\big{(}{\rm
ch}\big{[}{\rm H}_k({\mathfrak u}_-;L(\hgl_{\infty},\La^{\mf
a}(\la))\big{]}\big{)}$.
\end{thm}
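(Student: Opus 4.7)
The plan mirrors the proof of \thmref{mainthm} but at negative level, with the bijection $\vartheta$ and the involution $\omega$ (on the level of characters) playing the roles of $\theta$ and $\omega^\mf{x}$. Four ingredients are already at hand: complete reducibility of ${\rm H}_k(\mf{u}_-;L(\hgl_\infty,\La^{\mf a}_-(\la)))$ as an $\mf{l}$-module; the composition-factor bijection $L(\mf{l},\mu)\leftrightarrow L(\mf{l},\vartheta(\mu))$ between $\Lambda^k\mf{u}_-\otimes L(\hgl_\infty,\La^{\mf a}(\la))$ and $\Lambda^k\mf{u}_-\otimes L(\hgl_\infty,\La^{\mf a}_-(\la))$ with matching multiplicities (noted just before the statement); \thmref{char:negative}; and the Casimir-eigenvalue matching \lemref{casimir-negative}.

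First I would argue that every irreducible component $L(\mf{l},\eta)$ of ${\rm H}_k(\mf{u}_-;L(\hgl_\infty,\La^{\mf a}_-(\la)))$ has the form $L(\mf{l},\vartheta(w\circ\La^{\mf a}(\la)))$ for some unique $w\in W^0_k$. By complete reducibility together with the standard Casimir argument (cf.~\propref{eigenvalue:classical}(ii) adapted to the negative-level setting, as noted before the theorem), such $\eta$ must satisfy $(\eta+2\rho_c|\eta)_c=(\La^{\mf a}_-(\la)+2\rho_c|\La^{\mf a}_-(\la))_c$. Being a composition factor of $\Lambda^k\mf{u}_-\otimes L(\hgl_\infty,\La^{\mf a}_-(\la))$, we have $\eta=\vartheta(\mu)$ for some $\mu\in\mc{P}^+_{\mf{l}}$ with $L(\mf{l},\mu)$ appearing in $\Lambda^k\mf{u}_-\otimes L(\hgl_\infty,\La^{\mf a}(\la))$. \lemref{casimir-negative} translates the eigenvalue condition on $\eta$ to the classical eigenvalue condition on $\mu$, and \propref{eigenvalue:classical}(i) then forces $\mu=w\circ\La^{\mf a}(\la)$ for a unique $w\in W^0_k$.

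To pin down multiplicities, I would invoke the Euler--Poincar\'e principle applied to the standard complex, together with \thmref{char:negative}, to obtain
\begin{equation*}
\sum_{k\ge 0}(-1)^k{\rm ch}\,{\rm H}_k(\mf{u}_-;L(\hgl_\infty,\La^{\mf a}_-(\la)))=\sum_{k\ge 0}\sum_{w\in W^0_k}(-1)^k s_{(\la^+_w)'}(x_1,x_2,\cdots)s_{(\la^-_w)'}(x^{-1}_0,x^{-1}_{-1},\cdots).
\end{equation*}
Each summand on the right is exactly ${\rm ch}\,L(\mf{l},\vartheta(w\circ\La^{\mf a}(\la)))$: the transposition of partitions built into $\vartheta$ converts the Schur-function characters appearing in \eqnref{homology:schur} into their conjugate counterparts. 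Since the weights $\vartheta(w\circ\La^{\mf a}(\la))$ are pairwise distinct for $w\in W^0$, linear independence of the corresponding characters combined with the restriction from Step~1 forces multiplicity exactly one for each $w\in W^0_k$ in the $k$-th homology, yielding the claim and, as an immediate corollary, the identity ${\rm ch}\,{\rm H}_k(\mf{u}_-;L(\hgl_\infty,\La^{\mf a}_-(\la)))=\omega({\rm ch}\,{\rm H}_k(\mf{u}_-;L(\hgl_\infty,\La^{\mf a}(\la))))$. The main obstacle is rigorously establishing complete reducibility of the homology and the $\Omega$-action at negative level; both should follow from unitarity of the bosonic-Fock-space modules $L(\hgl_\infty,\La^{\mf a}_-(\la))$ combined with a direct adaptation of the generalized Kac--Moody machinery of \cite{J}, exactly in parallel to \lemref{complete:reducibility} and \propref{eigenvalue:super} in the super setting.
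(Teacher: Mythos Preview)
Your proposal is correct and follows essentially the same route as the paper, which simply states that the theorem is proved ``using the same arguments as in \thmref{mainthm} together with Lemma~\ref{casimir-negative}.'' You have accurately unpacked those arguments: the composition-factor bijection via $\vartheta$, the Casimir comparison of \lemref{casimir-negative} reducing to \propref{eigenvalue:classical}(i), and the Euler--Poincar\'e argument using \thmref{char:negative} to fix multiplicities.
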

For $\nu\in \mc{P}^+_{\mf{l}}$, let
$V(\nu)=U(\hgl_{\infty})\otimes_{U({\mf p})}L({\mf l},\nu)$ be the
generalized Verma module, where $\mf{p}=\mf{l}+\mf{u}_+$ and ${\mf
u}_+$ acts trivially on $L({\mf l},\nu)$.
\begin{cor}\label{resolution-negative} For $\la\in\mc{P}({\rm
GL}(\ell))$, there exists a resolution of $\hgl_\infty$-modules
\begin{equation*}
\cdots\stackrel{d_{k+1}}{\longrightarrow}Z_k\stackrel{d_k}
{\longrightarrow}Z_{k-1}\stackrel{d_{k-1}}{\longrightarrow}\cdots
\stackrel{d_1}{\longrightarrow}Z_0\stackrel{d_0}
{\longrightarrow}{L}(\hgl_{\infty},\La^{\mf a}_-(\la))\rightarrow 0
\end{equation*}
such that $Z_k$ has a flag of generalized Verma modules, and as an
$\mf{l}+\mf{u}_-$-module it is isomorphic to $\bigoplus_{w\in
W^0_k}V(\vartheta(w\circ\La^{\mf a}(\la)))$.
\end{cor}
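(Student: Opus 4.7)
The plan is to follow the strategy of \thmref{resolution} with appropriate modifications for the negative-level setting. First, I would form the standard resolution
\begin{equation*}
\cdots\stackrel{d_{k+1}}{\longrightarrow}Y_k\stackrel{d_k}{\longrightarrow}Y_{k-1}\stackrel{d_{k-1}}{\longrightarrow}\cdots\stackrel{d_0}{\longrightarrow}L(\hgl_\infty,\La^{\mf a}_-(\la))\to 0,
\end{equation*}
where $Y_k:=U(\hgl_\infty)\otimes_{U(\mf p)}\bigl(\Lambda^k(\hgl_\infty/\mf p)\otimes L(\hgl_\infty,\La^{\mf a}_-(\la))\bigr)$, with $\mf u_+$ acting trivially on the exterior power factor. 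The classical Casimir operator $\Omega$ of \secref{biformc} commutes with the $\hgl_\infty$-action and with the differentials $d_k$, so restricting to the generalized $c_\la$-eigenspace $Y_k^{c_\la}$, where $c_\la=(\La^{\mf a}_-(\la)+2\rho_c|\La^{\mf a}_-(\la))_c$, yields a subcomplex that is again a resolution of $L(\hgl_\infty,\La^{\mf a}_-(\la))$.

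Next, I would argue that each $Y_k$ admits a flag of generalized Verma modules $V(\mu)=U(\hgl_\infty)\otimes_{U(\mf p)}L(\mf l,\mu)$. The key input is the complete reducibility of $L(\hgl_\infty,\La^{\mf a}_-(\la))$ as an $\mf l$-module, which follows (in parallel with \lemref{aux411}) by realizing it as a direct summand of the bosonic Fock space $\mf F^{-\ell}$ and expanding the left-hand side of \eqnref{eq:dual-negative1} as a Cauchy-type sum of products of Schur functions in $\{x_r\}_{r>0}$ and $\{x_r^{-1}\}_{r\le 0}$ with finite isotypic multiplicities. Complete reducibility of $\Lambda^k(\mf u_-)\cong\Lambda^k(\C^{\le 0*}\otimes\C^{>0})$ over $\mf l$ then follows from finite-dimensional skew Howe duality. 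These two facts let me build an infinite $\mf p$-filtration of $\Lambda^k(\mf u_-)\otimes L(\hgl_\infty,\La^{\mf a}_-(\la))$ whose successive quotients are irreducible $\mf l$-modules with trivial $\mf u_+$-action; inducing to $\hgl_\infty$ converts this into a generalized Verma flag of $Y_k$, and passing to the direct summand $Y_k^{c_\la}$ preserves the flag property.

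Finally, I would identify which $V(\mu)$ can occur in $Y_k^{c_\la}$. Since $\Omega$ acts on $V(\mu)$ by the scalar $(\mu+2\rho_c|\mu)_c$, only those $\mu\in\mc{P}^+_{\mf l}$ satisfying $(\mu+2\rho_c|\mu)_c=c_\la$ contribute. By the discussion preceding \lemref{casimir-negative} the $\mf l$-composition factors of $\Lambda^k\mf u_-\otimes L(\hgl_\infty,\La^{\mf a}_-(\la))$ are precisely $L(\mf l,\vartheta(\mu))$ where $L(\mf l,\mu)$ is an $\mf l$-component of $\Lambda^k\mf u_-\otimes L(\hgl_\infty,\La^{\mf a}(\la))$; combining \lemref{casimir-negative} with \propref{eigenvalue:classical}(i) then pins these weights down to $\vartheta(w\circ\La^{\mf a}(\la))$ for $w\in W^0_k$, each with multiplicity one. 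The main obstacle is the flag-construction step: verifying that the infinite $\mf p$-filtration on $L(\hgl_\infty,\La^{\mf a}_-(\la))$ descends after induction and eigenspace truncation to a genuine finite Verma flag on each $Y_k^{c_\la}$; once this is established, the Casimir bookkeeping and the identification with $\vartheta(w\circ\La^{\mf a}(\la))$ are automatic.
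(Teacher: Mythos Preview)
Your proposal is correct and matches the paper's intended approach: the paper states this result as a corollary with no proof, implicitly meaning that the argument of \thmref{resolution} carries over verbatim once one replaces the super-ingredients $(\SG,\ov{\mf l},\ov{\mf u}_\pm,\ov\Omega,\rho_s,\theta,\corref{aux222})$ by their classical negative-level counterparts $(\hgl_\infty,\mf l,\mf u_\pm,\Omega,\rho_c,\vartheta,\lemref{casimir-negative})$. Your outline reproduces exactly that substitution, including the complete-reducibility input via the bosonic Fock realization and the Casimir bookkeeping that identifies the surviving Verma factors.
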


\section{Index of notation in alphabetical
order}\label{final:notation}

\begin{itemize}

\item[$\cdot$] $\F^\ell$: Fock space generated by $\ell$ pairs of
free fermions defined in \secref{classical:dualpairs},

\item[$\cdot$] $\F^{-\ell}$: Fock space generated by $\ell$ pairs of
free bosons defined in \secref{sec:negative},

\item[$\cdot$] $\SF^\ell$, $\SF_0^\ell$: Fock space generated by $\ell$ pairs of
free fermions and $\ell$ pairs of free bosons defined in
\secref{super:dualpairs},

\item[$\cdot$] $\SF^\frac{1}{2}$ : Fock space generated by a pair of
neutral free fermion and free boson defined in
\secref{super:dualpairs},

\item[$\cdot$] $\G$ : the Lie algebra $\widehat{\gl}_\infty$, $\mf{b}_\infty$, $\mathfrak{c}_{\infty}$ or
$\mathfrak{d}_{\infty}$ of type $\mf{x}\in\{\mf{a,b,c,d}\}$,
respectively,

\item[$\cdot$] $\SG$ : the Lie superalgebra $\widehat{\gl}_\dinfty
$, $\widehat{\mc{B}}$, $\widehat{\mc{C}}$ or $\widehat{\mc{D}}$ of
type $\mf{x}\in\{\mf{a,b,c,d}\}$, respectively,

\item[$\cdot$] $G$ : the Lie group ${\rm GL}(\ell)$, ${\rm Pin}(2\ell)$, ${\rm
Sp}(2\ell)$ or ${\rm O}(m)$ defined in \secref{classical:dualpairs},

\item[$\cdot$] $\mathfrak{h}$ : a Cartan subalgebra of $\mathfrak{g}$,

\item[$\cdot$] $\overline{\mathfrak{h}}$ : a Cartan subalgebra of $\SG$,

\item[$\cdot$] $HS_{\la}$ : Hook Schur
function associated with $\la\in\mc{P}^+_{m|n}$, see
\secref{standardborel},

\item[$\cdot$] $I$ : an index set for simple roots for $\G$ with $0\in I$,

\item[$\cdot$] $\overline{I}$ : an index set for simple roots for $\SG$ with $0\in\ov{I}$,

\item[$\cdot$] $\mathfrak l$ : the subalgebra of $\G$ with
simple roots indexed by $I\backslash \{0\}$ \eqnref{parabolic},

\item[$\cdot$] $\overline{\mathfrak l}$ : the subalgebra of $\SG$ with
simple roots indexed by $\overline{I}\backslash \{0\}$
\eqnref{superparabolic},

\item[$\cdot$] $L(\G,\La)$: the irreducible highest weight $\G$-module of
highest weight $\La\in\h^*$,

\item[$\cdot$] $L(\SG,\La)$: the irreducible highest weight $\SG$-module of
highest weight $\La\in\overline{\h}^*$,

\item[$\cdot$] $L(\mathfrak l,\mu)$ : the irreducible $\mathfrak l$-module
of highest weight $\mu\in\h^*$,

\item[$\cdot$] $L(\ov{\mathfrak
l},\mu)$ : the irreducible $\ov{\mathfrak l}$-module of highest
weight $\mu\in\ov{\h}^*$,

\item[$\cdot$] $\mathrm{L}(\G,\La^\mf{x}(\la))$, $\mathrm{L}(\SG,\SLa^\mf{x}(\la))$ : defined in (\ref{def:L}),

\item[$\cdot$]
$\mathrm{L}(\mf{l},w\circ\La^\mf{x}(\la))$,
$\mathrm{L}(\ov{\mf{l}},\theta(w\circ\La^\mf{x}(\la)))$ : defined in
(\ref{def:LL}),

\item[$\cdot$] $\La^{\mf x}(\lambda)$ : the highest weight
of $\G$-modules in Propositions \ref{duality}-\ref{duality-b},
$\lambda \in\mathcal{P}(G)$,

\item[$\cdot$] $\SLa^{\mf x}(\lambda)$ : the highest weight
of $\SG$-modules in Propositions \ref{sduality-a}-\ref{sduality for
d}, $\lambda \in\mathcal{P}(G)$,

\item[$\cdot$] $\la_w$, $\la_w^+$, $\la_w^-$: weights in $\h^*$
associated with $\la\in\mc{P}(G)$ and $w\in W$ defined in
\secref{classical:homology},

\item[$\cdot$] $\mc{O}^{+}_{m|n}$, $\mc{O}^{++}_{m|n}$ : categories of
$\gl(m|n)$-modules defined in \secref{standardborel},

\item[$\cdot$] $\mc{O}^{++}_{\infty|\infty}$ : category of
$\gl(\infty|\infty)$-modules defined in \secref{nonstandardborel},

\item[$\cdot$] $\Omega$ : the Casimir operator of $\G$, see \secref{biformc},

\item[$\cdot$] $\ov{\Omega}$ : the Casimir operator of $\SG$, see \secref{rhos:aux},

\item[$\cdot$] $\omega^\mf{x}$ : a composition of involutions on the ring of symmetric functions such that ${\rm
ch}\mathrm{L}(\SG,\SLa^\mf{x}(\la))=\omega^\mf{x}\left( {\rm
ch}\mathrm{L}(\G,\La^\mf{x}(\la))\right)$, see
\secref{omegaappears},

\item[$\cdot$] $\mathcal{P}(G)$ : the set of highest weights for irreducible
$G$-modules in Propositions \ref{duality}-\ref{duality-b},

\item[$\cdot$] $\mc{P}^+$ : the set of partitions,

\item[$\cdot$] $\mc{P}^+_{\mathfrak l}$ : the set of
$\mf{l}$-dominant weights,

\item[$\cdot$] $\mc{P}^+_{\ov {\mathfrak l}}$ : a set of
weights given in Section 4.3,

\item[$\cdot$] $\mc{P}^+_{m|n}$ : $(m|n)$-hook partitions
defined in \secref{standardborel},

\item[$\cdot$] $\ov{\mf{p}}:=\ov{\mf l}\oplus\ov{\mf u}_+$ \eqnref{superparabolic},

\item[$\cdot$] $\rho_c$ : ``half sum" of the positive roots of $\G$, see Sections \ref{rhoc:aux1} and \ref{rhoc:aux2},

\item[$\cdot$] $\rho_s$ : ``half sum" of the positive roots of $\SG$, see \secref{rhos:aux},

\item[$\cdot$] $\rho_1$, $\rho_2$ : defined in
\secref{comp:eigenvalues},

\item[$\cdot$] $s_\la$: Schur function
associated with $\la\in\mc{P}^+$,

\item[$\cdot$] $\theta$ : a bijection from $\mc{P}_{\mf l}^+$ to
$\mc{P}_{\ov{\mf{l}}}^+$ with
$\theta(\La^\mf{x}(\la))=\SLa^\mf{x}(\la)$, see (\ref{thetamap}),

\item[$\cdot$] $\theta_1$, $\theta_2$ : defined in
\secref{comp:eigenvalues},

\item[$\cdot$] $\U_\pm$ : the nilradicals of $\G$ with $\G =\U_+ \oplus
\mathfrak l \oplus \U_-$ \eqnref{parabolic},

\item[$\cdot$] $\overline{\U}_\pm$ : the nilradicals of $\SG$ with $\SG =\overline{\U}_+ \oplus
\overline{\mathfrak l} \oplus \overline{\U}_-$
\eqnref{superparabolic},

\item[$\cdot$] $V(\nu):=U(\SG)\otimes_{U(\ov{\mf p})}L(\ov{\mf l},\nu)$ for $\nu\in
\mc{P}^+_{\ov{\mf{l}}}$,

\item[$\cdot$] $W$
(resp. $W_0$) : Weyl group of $\G$ (resp. $\mathfrak l$),

\item[$\cdot$] $W^0_k$ : the set of the
minimal length representatives of $W_0\backslash W$ of length $k$,

\item[$\cdot$] $w\circ\mu:=w(\mu+\rho_c)-\rho_c$ for $w\in W$ and $\mu\in\h^*$,

\item[$\cdot$] $(Y_k,d_k)$ : a standard resolution of
$\mathrm{L}(\SG,\SLa^\mf{x}(\la))$, see \eqnref{standardres},

\item[$\cdot$] $(Y_k^{c_{\lambda}},d_k)$ : restriction of $(Y_k,d_k)$
to generalized $c_{\lambda}$-eigenspace of  $\overline{\Omega}$, see
\eqnref{resolution:verma},

\item[$\cdot$] $\zeta$: function on half-integers defined in
\secref{biformc},

\item[$\cdot$] $(\cdot|\cdot)_c$ : a symmetric bilinear form on
$\h^*$, see \secref{biformc},

\item[$\cdot$] $(\cdot|\cdot)_s$ : a symmetric bilinear form on
$\ov{\h}^*$, see \secref{rhos:aux},

\item[$\cdot$] $(\cdot|\cdot)_1$,
$(\cdot|\cdot)_2$ : defined in \secref{comp:eigenvalues}.

\item[$\cdot$] $\langle \cdot\rangle$ : function on integers defined in
\secref{nonstandardborel}.

\end{itemize}

\bigskip
\frenchspacing


\begin{thebibliography}{ABCDE}

\bi[A]{A} F.~Aribaud: {\em Une nouvelle d\'emonstration d'un
th\'eor\`eme de R.~Bott et B.~Kostant}, Bull.~Soc.~Math.~France {\bf
95} (1967), 205--242.

\bi[B]{B} J.~Brundan: {\em Kazhdan-Lusztig polynomials and character
formulae for the Lie superalgebra $\gl(m|n)$},
J.~Amer.~Math.~Soc.~{\bf 16} (2003), 185--231.

\bi[BR]{BR} A.~Berele and A.~Regev: {\em Hook Young diagrams with
applications to combinatorics and representations of Lie
superalgebras}, Adv.~Math.~{\bf 64} (1987), 118--175.

\bi[BT]{BT} T.~Br\"ocker and T.~tom Dieck: {\em Representations of
compact Lie Groups}, Springer Verlag, New York 1995.

\bi[CL]{CL} S.-J.~Cheng and N.~Lam: {\em Infinite-dimensional Lie
superalgebras and hook Schur functions}, Commun.~Math.~Phys.~{\bf
238} (2003), 95--118.

\bi[CW1]{CW1} S.-J.~Cheng and W.~Wang: {\em Howe duality for Lie
superalgebras}, Compositio Math.~{\bf 128} (2001), 55--94.

\bi[CW2]{CW2} S.-J.~Cheng and W.~Wang: {\em Lie subalgebras of
differential operators on the super circle}, Publ. Res. Inst. Math.
Sci. {\bf 39} (2003), 545--600.

\bi[CZ1]{CZ1} S.-J.~Cheng and R.~B.~Zhang: {\it Analogue of
Kostant's $\mathfrak u$-cohomology formula for the general linear
superalgebra}, Int.~Math.~Res.~Not.~{\bf 2004} (2004), 31--53.

\bi[CZ2]{CZ2} S.-J.~Cheng and R.~B.~Zhang: {\it Howe duality and
combinatorial character formula for orthosymplectic Lie
superalgebras}, Adv.~Math.~{\bf 182} (2004), 124--172.

\bi[CWZ]{CWZ} S.-J.~Cheng, W.~Wang and R.~B.~Zhang: {\it Super
duality and Kazhdan-Lusztig polynomials}, Trans.~Amer.~Math.~Soc.,
to appear, math.RT/0409016.


\bi[DJKM]{DJKM} E.~Date, M.~Jimbo, M.~Kashiwara and T.~Miwa: {\em
Transformation groups for soliton equations. III. Operator approach
to the Kadomtsev-Petviashvili equation}, J. Phys. Soc. Japan 50
(1981), 3806--3812. {\em Transformation groups for soliton
equations. IV. A new hierarchy of soliton equations of KP-type},
Phys. D 4 (1981/82), 343--365.


\bi[F]{F} I.~Frenkel: {\em Representations of affine Lie algebras,
Hecke modular forms and Kortweg-de Vries type equations},
Lect.~Notes Math.~{\bf 933} (1982), 71--110.


\bi[Fu]{Fu} D.~B.~Fuks: {\em Cohomology of infinite-dimensional Lie
algebras}, Contemporary Soviet Mathematics, Consultants Bureau, New
York, 1986.


\bi[GL]{GL} H.~Garland and J.~Lepowsky: {\em Lie algebra homology
and the Macdonald-Kac formulas}, Invent. Math. {\bf 34} (1976),
37--76.

\bi[H1]{H} R.~Howe: {\em Remarks on classical invariant theory},
Trans. Amer. Math. Soc. {\bf 313} (1989), 539--570.


\bi[H2]{H2} R.~Howe: {\it Perspectives on Invariant Theory: Schur
Duality, Multiplicity-free Actions and Beyond}, The Schur Lectures,
Israel Math. Conf. Proc. {\bf 8}, Tel Aviv (1992), 1--182.

\bi[J]{J} E.~Jurisich: {\em An exposition of generalized Kac-Moody
algebras}. Lie algebras and their representations (Seoul, 1995),
121--159, Contemp. Math. {\bf 194}, Amer. Math. Soc., Providence,
RI, 1996.

\bibitem[K1]{K1} V.~Kac, {\em Lie superalgebras}, Adv. Math. {\bf 16}
(1977), 8--96.

\bi[K2]{K} V.~Kac: {\em Infinite-dimensional Lie algebras}, Third
edition, Cambridge University Press, Cambridge, 1990.

\bi[KvL]{KvL} V.~Kac and J.~van de Leur: {\em Super boson-fermion
correspondence}, Ann.~Inst.~Fourier {\bf 37} (1987), 99--137.

\bi[Ko]{Ko}  B.~Kostant: {\em Lie Algebra cohomology and the
generalized Borel-Weil theorem}, Ann.~Math.~{\bf 74} (1961),
329--387.

\bibitem[KK]{KK} S.~J.~Kang and J.~H.~Kwon: {\em Graded Lie superalgebras, supertrace
formula, and orbit Lie superalgrbra}, Proc.~London Math.~Soc.
{\bf81} (2000), 675--724.



\bi[KR]{KR2} V.~Kac and A.~Radul: {\em Representation theory of the
vertex algebra $W_{1+\infty}$}, Transform.~Groups {\bf 1} (1996),
41--70.

\bi[KW]{KW2} V.~Kac and M.~Wakimoto: {\em Integrable highest weight
modules over affine superalgebras and Appell's function},
Commun.~Math.~Phys.~{\bf 215} (2001), 631--682.


\bi[L]{L} L.~Liu: {\em Kostant's Formula for Kac-Moody Lie
algebras}, J.~Algebra {\bf 149} (1992), 155--178.

\bi[LZ]{LZ} N.~Lam and R.~B.~Zhang: {\em Quasi-finite modules for
Lie superalgebras of infinite rank}, Trans. Amer. Math. Soc. {\bf
358} (2006), 403--439.

\bi[LS]{LS} D.~Leites and I.~Shchepochkina: {\em The Howe duality
and Lie superalgebras}. Noncommutative structures in mathematics and
physics (Kiev, 2000), 93--111, NATO Sci. Ser. II Math. Phys. Chem.,
22, Kluwer Acad. Publ., Dordrecht, 2001.


\bibitem[M]{M} I.~G.~Macdonald: {\em Symmetric functions and Hall
polynomials}, Oxford Math.~Monogr., Clarendon Press, Oxford, 1995.



\bi[S1]{S} A.~Sergeev: {\em The tensor algebra of the identity
representation as a module over the Lie superalgebras $gl(n,m)$ and
$Q(n)$}, Math. USSR Sbornik {\bf 51} (1985), 419--427.

\bi[S2]{S2} A.~Sergeev: {\em An analogue of classical invariant
theory for Lie superalgebras I}, Michigan Math.~J.~{\bf 49} (2001),
113--146. {\em An analogue of classical invariant theory for Lie
superalgebras II}, Michigan Math.~J.~{\bf 49} (2001), 147--168.

\bibitem[Se]{Se} V.~Serganova: {\em Kazhdan-Lusztig polynomials
and character formula for the Lie superalgebra $\gl(m\vert n)$},
Selecta Math. (N.S.) {\bf 2} (1996), 607--651.

\bi[W]{W1} W.~Wang: {\em Duality in infinite dimensional Fock
representations}, Commun.~Contem.~Math.~{\bf 1} (1999), 155--199.



\end{thebibliography}
\end{document}